\begin{document}
\newtheorem{theorem}{Theorem}[section]
\newtheorem{thmdef}{Theorem-definition}[section]
\newtheorem{lemma}[theorem]{Lemma}
\newtheorem{caution}[theorem]{Caution}
\newtheorem{definition}[theorem]{Definition}

\newtheorem{lemmadef}[theorem]{Lemma-definition}
\newtheorem{proposition}[theorem]{Proposition}
\newtheorem{corollary}[theorem]{Corollary}
\newtheorem{conjecture}[theorem]{Conjecture}
\newtheorem*{conjecture*}{Conjecture}
\newtheorem{observation}[theorem]{Observation}
\newtheorem{question}[theorem]{Question}
\newtheorem{problem}[theorem]{Problem}

\newtheorem*{definition*}{Definition}
\newtheorem*{caution*}{Caution}

\newtheorem{remark}[theorem]{Remark}

\newtheorem*{lemma*}{Lemma}

\renewcommand{\bar}{\overline}
\def\C{\mathbb{C}}
\def\R{\mathbb{R}}

\def\T{\mathrm{T}}
\def\X{\mathcal{X}}
\def\U{\mathcal{U}}
\def\P{\Phi}
\def\M{\mathcal{M}}
\def\Z{\mathcal{Z}_{m}}
\def\ZZ{\mathcal{Z}_{m}^{H}}
\def\d{\partial}
\def\cP{\mathscr P}
\def\bB{\mathbb B}
\def\TT{\mathcal{T}_{m}^{H}}
\def\GD{\Gamma \backslash D}
\def\mf{\mathfrak}
\def\ad{\mathrm{ad}\,}
\def\Ad{\mathrm{Ad}\,}
\def\mI{\mathscr{I}}
\def\mU{\mathscr{U}}

\def\nca{\mathrm{nc1}}
\def\ncb{\mathrm{nc2}}

\def\i{\sqrt{-1}}

\title{Vanishing Theorems and Complex Structures on Non-Classical Flag Domains}
\author{Kefeng Liu}
\address{Mathematical Sciences Research Center, Chongqing University of Technology, Chongqing 400054, China; \newline
Department of Mathematics,University of California at Los Angeles, Los Angeles, CA 90095-1555, USA}
\email{liu@math.ucla.edu}

\author{Yang Shen}
\address{Mathematical Sciences Research Center, Chongqing University of Technology, Chongqing 400054, China}
\email{syliuguang2007@163.com}

\begin{abstract}
We prove that every nontrivial line bundle on a compact quotient of a
non-classical flag domain has no nonzero global sections. The proof first
establishes the Green--Griffiths--Kerr conjecture by showing that the curvature
of every nontrivial locally homogeneous line bundle has a negative direction,
and then extends this property to arbitrary line bundles by decomposing their
curvature into a homogeneous part and a seminegative correction term.

We also establish several equivalent geometric and root-theoretic
characterizations of non-classical flag domains. As consequences, their compact
quotients are not in Fujiki class $\mathcal C$, contain no nonzero effective
divisors, admit no nonconstant meromorphic functions, and have algebraic
dimension zero. When $D=G_\R/V$ is non-classical and $G_\R$ is of Hermitian
type, we construct another natural $G_\R$-invariant complex structure on the
underlying differentiable manifold of $D$. The resulting classical flag domain
has projective compact quotients. Thus the same differentiable manifold admits
two invariant complex structures with opposite algebro-geometric behavior: one
gives a projective manifold, whereas the other gives a non-classical quotient
with the vanishing and non-algebraicity properties above.
\end{abstract}

\maketitle

\tableofcontents

\parskip=5pt
\baselineskip=15pt

\vspace{-20pt}




\setcounter{section}{-1}

\section{Introduction}

Flag domains form an important class of homogeneous complex manifolds arising
from Hodge theory and representation theory. In this paper, a flag domain is
written as
$$
D=G_{\mathbb R}/V,
$$
where $G_{\mathbb R}$ is a non-compact real semisimple Lie group and $V$ is
the centralizer of a subtorus of a compact Cartan subgroup $H$ of
$G_{\mathbb R}$.
 The complex structure on $D$ is the one induced by its realization
as an open $G_{\mathbb R}$-orbit in a complex flag variety.

Let $K\subset G_{\mathbb R}$ be a maximal compact subgroup containing $V$.
Then there is a natural projection
$$
p:\,D=G_{\mathbb R}/V\longrightarrow G_{\mathbb R}/K .
$$
The flag domain $D$ is called classical if $G_{\mathbb R}/K$ is a Hermitian
symmetric space and $p$ is holomorphic. Otherwise, $D$ is called
non-classical.

Let $\Gamma\subset G_{\mathbb R}$ be a cocompact torsion-free discrete
subgroup. Then
$$
X=\Gamma\backslash D
$$
is a compact complex manifold. A central problem is to understand how the
complex geometry of $X$ reflects the non-classicality of $D$. We approach this
problem through vanishing theorems for holomorphic sections. Our main result
shows that every nontrivial line bundle on $X$ has no nonzero global sections.

\begin{theorem}[Main Theorem; Theorem \ref{vanishing of any line}]
\label{intr main vanishing theorem}
Let $D=G_{\mathbb R}/V$ be a non-classical flag domain, and let
$$
X=\Gamma\backslash D
$$
be a compact quotient by a co-compact torsion-free discrete subgroup
$\Gamma\subset G_{\mathbb R}$. Then, for every nontrivial line bundle $L_X$ on
$X$, one has
$$
H^0(X,L_X)=0.
$$
\end{theorem}

Green, Griffiths and Kerr conjectured in \cite[p.~78]{GGK} that, on a compact
quotient of a non-classical flag domain, every locally homogeneous vector
bundle induced by a nontrivial irreducible representation has vanishing
zeroth cohomology. Via the standard
projection
$$
G_{\mathbb R}/H \longrightarrow G_{\mathbb R}/V
$$
and the Borel--Weil--Bott theorem, this is equivalent to the corresponding
vanishing statement for nontrivial locally homogeneous line bundles on
quotients of $G_{\mathbb R}/H$. We prove this conjecture in full generality
when $G_{\mathbb R}$ is simple.

\begin{theorem}[Theorem \ref{D=0 conjecture}]
\label{intr D=0 conjecture}
Let $D=G_{\mathbb R}/V$ be a non-classical flag domain with
$G_{\mathbb R}$ simple, and let
$$
X=\Gamma\backslash D
$$
be a compact quotient by a co-compact torsion-free discrete subgroup
$\Gamma\subset G_{\mathbb R}$. Let $\mathcal E_X$ be the locally homogeneous
vector bundle induced by a nontrivial irreducible representation of $V$.
Then
$$
H^0(X,\mathcal E_X)=0.
$$
\end{theorem}

The two vanishing theorems are proved through curvature negativity. For
locally homogeneous line bundles, we establish a root-theoretic criterion
showing that the curvature has a negative direction at every point; the
maximum principle then proves the Green--Griffiths--Kerr conjecture. For an
arbitrary line bundle, we decompose its curvature into a homogeneous part and
a seminegative correction term, and choose the metric so that a negative
direction is preserved. This yields Theorem
\ref{intr main vanishing theorem}.

The root-theoretic criterion is proved in Sections
\ref{auto cohom} and \ref{lemma proof}, while the extension to arbitrary line
bundles is carried out in Section \ref{hom line}.

In particular, compact quotients of non-classical flag domains contain no
nonzero effective divisors. This gives a strong form of non-algebraicity, which
will be developed further below.

The vanishing theorems above are part of a larger set of equivalent
characterizations of non-classical flag domains.
In Section \ref{equ ch of D} we prove a
full list of nine equivalent conditions, including root-theoretic and
curvature-theoretic formulations. In the introduction we state the following
geometric part of the characterization.

\begin{theorem}[Geometric characterizations of non-classical flag domains]
\label{intr equ of non-classical D}
Let $D=G_{\mathbb R}/V$ be a flag domain with $G_{\mathbb R}$ simple. Then the
following conditions are equivalent:
\begin{itemize}
  \item[(1)] $D$ is non-classical;

  \item[(2)] for every co-compact torsion-free discrete subgroup
$\Gamma\subset G_{\mathbb R}$ and every nontrivial irreducible locally
homogeneous vector bundle $\mathcal E_X$ on
$$
X=\Gamma\backslash D,
$$
one has
$$
H^0(X,\mathcal E_X)=0;
$$
  \item[(3)] if
  $$
  m_0=\dim_{\mathbb C}D-\dim_{\mathbb C}(K/V),
  $$
  then, for every compact quotient $X=\Gamma\backslash D$,
  $$
  H^0\bigl(X,(\mathrm T^*X)^{\otimes m_0}\bigr)=0;
  $$

  \item[(4)] $D$ is cycle chain connected;

  \item[(5)] every holomorphic function on $D$ is constant, that is,
  $$
  H^0(D,\mathcal O_D)=\mathbb C;
  $$

  \item[(6)] for every compact quotient $X=\Gamma\backslash D$, every
  meromorphic function on $X$ is constant, that is,
  $$
  H^0(X,\mathcal M_X)=\mathbb C.
  $$
\end{itemize}
\end{theorem}

Recall that $D$ is called cycle chain connected if any two points of $D$ can be
connected by a finite chain of compact cycles, where the cycles are
deformations of the fibers of the projection
$$
p:\,D\longrightarrow G_{\mathbb R}/K.
$$
The implication from non-classicality to cycle chain connectedness was proved
by Griffiths, Robles and Toledo in \cite{GRT}; see also Huckleberry
\cite{Huc} for a direct proof. Our contribution here is to relate this
geometric property to the vanishing phenomena predicted by Green, Griffiths
and Kerr.

\begin{remark}
The complete form of Theorem \ref{intr equ of non-classical D} contains nine
equivalent conditions; see Theorem \ref{equ of non-classical D}. In particular,
the equivalence between the vanishing of sections of nontrivial homogeneous
vector bundles and cycle chain connectedness is proved by translating both
statements into equivalent root-theoretic conditions. 

The key point is that both sides are controlled by the interaction between
the compact and non-compact positive roots of the flag domain. This
root-theoretic structure simultaneously governs the curvature negativity of
homogeneous bundles and the cycle geometry of $D$.\end{remark}

The vanishing theorem has strong consequences for compact quotients of
non-classical flag domains. In particular, it implies that such quotients are
far from being K\"ahler or algebraic.

\begin{theorem}[Theorem \ref{comact X not ddbar}]
\label{intr comact X not ddbar}
Let $D=G_{\mathbb R}/V$ be a non-classical flag domain, and let
$\Gamma\subset G_{\mathbb R}$ be a co-compact torsion-free discrete subgroup.
Then the compact complex manifold
$$
X=\Gamma\backslash D
$$
is not in Fujiki class $\mathcal C$.
\end{theorem}

Moreover, the vanishing of sections of nontrivial line bundles yields a
stronger form of non-algebraicity.

\begin{corollary}
\label{intr no divisor meromorphic}
Let $X=\Gamma \backslash D$ be the quotient of a non-classical flag domain
$D=G_{\mathbb R}/V$ by a co-compact torsion-free discrete subgroup
$\Gamma \subset G_{\mathbb R}$. Then the following hold:
\begin{enumerate}
  \item[(i)] $X$ contains no nonzero effective divisors;

  \item[(ii)] $X$ admits no nonconstant meromorphic functions. In particular,
  the algebraic dimension of $X$ is zero;

  \item[(iii)] every meromorphic map from $X$ to a projective variety has
  zero-dimensional image.
\end{enumerate}
\end{corollary}

Thus compact quotients of non-classical flag domains are not merely
non-K\"ahler or non-projective. They have no divisorial geometry and no
nontrivial meromorphic geometry. This shows that their failure to be algebraic
is not a mild phenomenon but an intrinsic feature of the non-classical complex
structure.

The next result is in the spirit of Mumford's conjectural criterion for
rational connectedness. It shows that, for compact quotients of flag domains,
rational connectedness is equivalent to the vanishing of a suitable tensor
power of the cotangent bundle, and therefore provides a class of examples
supporting this expected relationship.

\begin{theorem}[Theorem \ref{Mum conj X}]
\label{intr Mum conj X}
Let $D=G_{\mathbb R}/V$ be a flag domain, and let
$$
X=\Gamma\backslash D
$$
be a quotient by a co-compact torsion-free discrete subgroup
$\Gamma\subset G_{\mathbb R}$. Then $X$ is rationally connected if and only if
$$
H^0\bigl(X,(\mathrm T^*X)^{\otimes m_0}\bigr)=0,
$$
where
$$
m_0=\dim_{\mathbb C}D-\dim_{\mathbb C}(K/V).
$$
\end{theorem}

We now turn to another phenomenon, which is specific to groups $G_{\R}$ of Hermitian
type. Suppose that $D=G_{\mathbb R}/V$ is non-classical and that
$G_{\mathbb R}/K$ is a Hermitian symmetric space. We show that the underlying
differentiable manifold of $D$ admits another natural
$G_{\mathbb R}$-invariant complex structure. We denote by $D'$ the same
differentiable manifold equipped with this new complex structure.

\begin{theorem}[Theorems \ref{new complex D} and \ref{check DR}]
\label{intr new complex D}
Suppose that $D=G_{\mathbb R}/V$ is non-classical and that $G_{\mathbb R}$ is
of Hermitian type. Then there exists a new $G_{\mathbb R}$-invariant complex
structure on the underlying differentiable manifold of $D$ and a parabolic
subgroup $B'\subset G_{\mathbb C}$ such that $D'$ is the
$G_{\mathbb R}$-orbit of $[B']$ in the flag variety
$$
\check D=G_{\mathbb C}/B.
$$
Moreover, $D'$ is a classical flag domain, and the natural projection
$$
p:\,D'\longrightarrow G_{\mathbb R}/K
$$
is holomorphic.
\end{theorem}

Consequently, the same differentiable manifold may carry two invariant complex
structures with completely different algebro-geometric behavior.

\begin{theorem}[Theorem \ref{proj no Fujiki}]
\label{intr proj no Fujiki}
Let the assumptions be as in Theorem \ref{intr new complex D}, and let
$\Gamma\subset G_{\mathbb R}$ be a co-compact torsion-free discrete subgroup.
Then the two compact complex manifolds
$$
X=\Gamma\backslash D,
\qquad
X'=\Gamma\backslash D'
$$
are diffeomorphic. However, $X'$ is projective, whereas $X$ is not in Fujiki
class $\mathcal C$. Moreover, every nontrivial line bundle on $X$ has no
nonzero global sections. In particular, $X$ contains no nonzero effective
divisors, admits no nonconstant meromorphic functions, and has algebraic
dimension zero.
\end{theorem}

\begin{remark}
Theorem \ref{intr proj no Fujiki} should be viewed as a contrast between
topology and complex geometry. It gives the same smooth manifold, with the
same lattice $\Gamma$ as its fundamental group, but with two invariant complex
structures of completely different algebro-geometric nature. In K\"ahler and
projective geometry, the fundamental group often controls geometry through
global objects such as holomorphic forms, the Albanese map,
Shafarevich-type maps, and linear systems. Our example shows the limitation
of this mechanism outside the K\"ahler or projective setting: although
$\Gamma$ is a projective group, the non-classical realization $X$ has too few
global holomorphic or meromorphic objects for these tools to apply. \end{remark}

As a first application of Theorem \ref{intr proj no Fujiki}, we derive the
following corollary.

\begin{corollary}
There exists a compact complex manifold which is rationally connected but not
rationally connected with compact deformation space.
\end{corollary}

Finally, we mention a further application to automorphic cohomology and
Penrose transforms. Building on the construction of the classical complex
structure $D'$, we generalized in \cite{LS24} the Penrose transformations
introduced in Lectures 8 and 9 of \cite{GGK} for $SU(2,1)$ and $Sp(4,\mathbb R)$
to arbitrary real Lie groups $G_{\mathbb R}$ of Hermitian type. This
generalization yields arithmetic structures induced from $X'$ on higher
automorphic cohomology groups of certain homogeneous line bundles over $X$.

The paper is organized as follows. In Section \ref{Pre}, we recall the
preliminary material on flag domains, homogeneous vector bundles, root
decompositions, and curvature formulas. In Section \ref{auto cohom}, we prove
Theorem \ref{intr D=0 conjecture} assuming the key Lie-theoretic criterion and
derive consequences for automorphic cohomology. The proof of the Lie-theoretic
criterion is given in Section \ref{lemma proof}. In Section \ref{equ ch of D},
we prove the full list of equivalent characterizations of non-classical flag
domains. In Section \ref{ppl}, we construct the new $G_{\mathbb R}$-invariant
complex structure on any non-classical flag domain $D=G_{\mathbb R}/V$ with
$G_{\mathbb R}$ of Hermitian type. In Section \ref{applications}, we derive
geometric applications of the vanishing theorem and of the new complex
structure, including Theorems \ref{intr comact X not ddbar},
\ref{intr Mum conj X}, and \ref{intr proj no Fujiki}. 
Finally, in Section \ref{hom line}, we prove the curvature property for
arbitrary line bundles on smooth quotients of non-classical flag domains and
deduce the vanishing of global sections of all nontrivial line bundles on
compact quotients.

\section{Flag domains from Lie theory}\label{Pre}

In this section we recall the basic Lie-theoretic description of flag domains.
Our conventions follow the terminology used in Hodge theory, especially in the
work of Griffiths--Schmid, Carlson--Toledo, and Green--Griffiths--Kerr. A flag
domain will be viewed in two equivalent ways: as an open orbit in a complex
flag variety, and as a homogeneous complex manifold of the form
$G_{\mathbb R}/V$. 
The open-orbit viewpoint explains the complex structure on $D$, whereas the
description as
$$
D=G_{\mathbb R}/V
$$
provides the intrinsic homogeneous viewpoint used throughout the paper to
study tangent bundles, homogeneous vector bundles, curvature, and compact
quotients.

For example, we use
$$ \mf g, \mf b,\mf v,\cdots$$
to denote the complex Lie algebras and use the same notations with subscript $0$ (e.g., $\mf g_0$, $\mf b_0$, $\mf v_0$, $\cdots$) to denote the corresponding real Lie algebras. On the other hand we use 
$$H,V,K,\cdots$$
to denote the real Lie groups and use the same notations with subscript $\C$ (e.g. $H_\C$, $V_\C$, $K_\C$, $\cdots$) to denote the corresponding complex Lie groups. 

Recall that $G_{\mathbb R}$ is a non-compact real semi-simple Lie group, and $V\subset G_{\mathbb R}$ is the centralizer of a subtorus of the compact Cartan subgroup $H$ of $G_{\mathbb R}$. Hence $V$ is a compact subgroup of $G_{\mathbb R}$ containing $H$.
Let $K$ be the maximal compact subgroup of $G_{\mathbb R}$ containing $V$. Then we have the inclusions of real Lie groups 
\begin{equation}\label{HVK}
H\subset V\subset K\subset G_{\mathbb R}.
\end{equation}
Let $D=G_\R/V$ be the flag domain which is also an open $G_\R$-orbit of the flag variety $\check D = G_\C/B$, where $G_\C$ is the complexification of $G_\R$ and $B$ is a parabolic subgroup of $G_\C$.

More explicitly, after choosing the base point $[B]\in \check D$, the
stabilizer of $[B]$ in $G_{\mathbb R}$ is
$$
V=B\cap G_{\mathbb R}.
$$
Thus the flag domain is realized as an open $G_{\mathbb R}$-orbit
\begin{equation}\label{DinDcheck}
  D=G_{\mathbb R}/V\subset \check{D}=G_\mathbb C/B
\end{equation}
inside the flag variety $\check D$, which is called the compact dual of $D$.
The complex structure on $D$ is the one induced from the open embedding
\eqref{DinDcheck}. Equivalently, $D$ is an open $G_{\mathbb R}$-orbit in its
compact dual.

Since $V\subset K$, the flag domain has a natural projection onto the
Riemannian symmetric space associated with $G_{\mathbb R}$:
\begin{equation}\label{DtoGK}
  p:\,D=G_{\mathbb R}/V\longrightarrow G_{\mathbb R}/K
\end{equation}
The fiber over the base point is the compact flag variety $K/V$.
This projection is the basic geometric object used to distinguish classical
and non-classical flag domains.

\begin{definition}
A flag domain $D$ is called classical if $G_{\mathbb R}/K$ is a Hermitian
symmetric space and the projection map $p$ in \eqref{DtoGK} is holomorphic.
Otherwise, $D$ is called non-classical.
\end{definition}

Thus a flag domain is non-classical either when $G_{\mathbb R}/K$ is not
Hermitian symmetric, or when $G_{\mathbb R}/K$ is Hermitian symmetric but the
projection map $p$ is not holomorphic. We say that $G_{\mathbb R}$ is of
Hermitian type if $G_{\mathbb R}/K$ is a Hermitian symmetric space.

Let 
\begin{equation}\label{hvk}
\mf h_0\subset \mf v_0\subset \mf k_0\subset \mf g_0
\end{equation}
be the corresponding Lie algebras of the Lie groups in \eqref{HVK}. 
Let $$\mf g=\mf g_0\otimes_\R \C$$ be the Lie algebra of $G_\C$ and $\mf b\subset \mf g$ be the Lie algebra of $B$.
Hence $\mf g_0\subset \mf g$ is a real subalgebra and $$\mf v_0= \mf g_0\cap \mf b.$$ Since $K\subset G_\mathbb{R}$ is maximal compact subgroup, there is a subspace $\mf p_0\subset \mf g_0$ such that 
\begin{equation}\label{Cartandecomp}
  \mf g_0=\mf k_0\oplus \mf p_0, \,\,  [\mf k_0,\mf p_0]\subset \mf p_0,\, \,[\mf p_0,\mf p_0]\subset \mf k_0.
\end{equation}
Moreover, $$\mf g_c\triangleq \mf k_0\oplus  \sqrt{-1} \mf p_0$$ is a compact real form of $\mf g_0$. The decomposition $$\mf g_0=\mf k_0\oplus \mf p_0$$ is called the Cartan decomposition.

The complex structure on $D$ can be described at the level of tangent spaces.
Since $D$ is open in $\check D=G_\C/B$, the holomorphic tangent space at the
base point is naturally identified with $\mf g/\mf b$. In the notation below,
this quotient is represented by the nilpotent subalgebra $\mf n_-$. Thus
$$
\mathrm T^{1,0}D=G_{\mathbb R}\times_V \mathfrak n_- \subset \mathrm T^{\mathbb C}D,
$$
where $\mathrm T^{\mathbb C}D$ denotes the complexified tangent bundle of the
underlying differentiable manifold. 

Let 
\begin{equation}\label{hvk complex}
\mf h\subset \mf v\subset \mf k\subset \mf g
\end{equation}
be the complexification of the corresponding Lie subalgebras in \eqref{hvk} respectively. Let $$\Delta=\Delta(\mf g, \mf h)$$ be the root system of $\mf g$ with respect to the Cartan subalgebra $\mf h \subset \mf g$. Then we have the decomposition 
\begin{equation}\label{CartandecompC}
  \mf g =\mf h \oplus \bigoplus_{\alpha \in \Delta}\mf g_\alpha, 
\end{equation}
where $$\mf g_\alpha =\{X\in \mf g:\, [h,X]=\alpha(h)X, \forall\, h\in \mf h\}$$ is the root space which is one-dimensional with basis $e_\alpha$.

Let $$\mf p=\mf p_0\otimes_\mathbb{R} \mathbb{C}\subset \mf g.$$ Then we have that 
$$\mf g=\mf k\oplus \mf p, \, \, [\mf k,\mf p]\subseteq \mf p,\,\,[\mf p,\mf p]\subseteq \mf k$$
and the corresponding decomposition of the root system 
$$\Delta =\Delta^{\mathrm{c}}\cup \Delta^{\mathrm{nc}}$$
into compact roots $\Delta^{\mathrm{c}}$ and non-compact roots $\Delta^{\mathrm{nc}}$ such that 
\begin{eqnarray*}
  \mf k &=& \mf h \oplus \bigoplus_{\alpha \in \Delta^{\mathrm{c}}}\mf g_\alpha,\\
  \mf p &=& \bigoplus_{\alpha \in \Delta^{\mathrm{nc}}}\mf g_\alpha . 
\end{eqnarray*}

Since $B\subset G_\C$ is parabolic, the corresponding Lie algebra $\mf b$ contains a Borel subalgebra $\mathfrak b_{\mathrm{Bor}}$ of $\mf g$ which determines a set $$\Delta_+=\Delta_+(\mf g,\mf h)$$ of positive roots such that
\begin{equation}\label{Borel alg}
\mathfrak b_{\mathrm{Bor}} = \mf h \oplus \bigoplus_{\alpha \in \Delta_+}\mf g_{-\alpha}.
\end{equation}

We define
\begin{equation}\label{n+-}
  \mf n_+=\bigoplus_{\alpha\in \Delta_+\setminus \Delta(\mf v,\mf h)}
  \mf g_{-\alpha},\,\, 
  \mf n_-=\bigoplus_{\alpha\in \Delta_+\setminus \Delta(\mf v,\mf h)}
  \mf g_{\alpha}.
\end{equation}
Then we have the decomposition
$$
\mf g = \mf n_+\oplus \mf v \oplus \mf n_-=\mf b\oplus \mf n_-.
$$
We also define
\begin{equation}\label{kp+-}
  \mf k_{\pm}=\mf n_{\pm}\cap \mf k,\,\,  \mf p_{\pm}=\mf n_{\pm}\cap \mf p
\end{equation}
with the corresponding decomposition
$$
\mf k =\mf k_+\oplus \mf v \oplus \mf k_-,\,\, \mf p=\mf p_+\oplus \mf p_-.
$$

In particular,
$$
\mf n_-=\mf k_-\oplus \mf p_-.
$$
Therefore the holomorphic tangent bundle decomposes as
$$
\mathrm T^{1,0}D=G_{\mathbb R}\times_V(\mf k_-\oplus \mf p_-).
$$
The summand corresponding to $\mf k_-$ is tangent to the compact cycle $K/V$,
whereas the summand corresponding to $\mf p_-$ gives the transverse
directions. In the classical case these transverse directions are compatible
with the holomorphic projection $p:D\to G_{\mathbb R}/K$; in the non-classical
case this compatibility fails.

Introduce the sets of roots $\Delta_+^{\mathrm{c}}\ \mbox{and} \ \Delta_+^{\mathrm{nc}} \subset \Delta_+$ which are respectively the positive compact roots and noncompact roots defined by 
\begin{eqnarray}
\Delta_+^{\mathrm{c}} &=& \Delta^{\mathrm{c}}\cap \Delta_+  \label{compact roots}\\
\Delta_+^{\mathrm{nc}} &=& \Delta^{\mathrm{nc}} \cap \Delta_+.\label{noncompact roots}
\end{eqnarray}

For a complex Lie algebra $\mf l$ such that $$\mf h\subset \mf l\subset \mf g,$$ we denote by $\Delta(\mf l, \mf h)$  the root system of $\mf l$ with respect to the Cartan subalgebra $\mf h$, and denote by $\Delta_{+}(\mf l, \mf h)$ and $\Delta_{-}(\mf l, \mf h)$ the corresponding sets of positive and negative root respectively.

For a subspace $\mf s \subset \mf g$, we denote by $\Delta_{\mf s}$ the subset of $\Delta$ such that $e_\alpha \in \mf s$ if and only if $\alpha \in \Delta_{\mf s}$. With these notations, we clearly have
$$\Delta_+^{\mathrm{c}}=\Delta_+(\mf k, \mf h)=\Delta_{\mf k_-}\cup  \Delta_+(\mf v, \mf h)\, \, \mbox{and}\, \, \Delta_+^{\mathrm{nc}}=\Delta_{\mf p_-}.$$


\section{Vanishing theorems on compact quotients}
\label{auto cohom}

\begin{lemma*}[Reduction Lemma]
Let $H\subset G_{\R}$ be a compact Cartan subgroup and let $V$ be a compact subgroup of $G_{\R}$ containing $H$. Then we have the projection map
$$\pi:\,Y=\Gamma \backslash G_\R/H \to X=\Gamma \backslash G_\R/V$$
between quotients of the two flag domains, where $\Gamma\subset G_{\R}$ is a torsion-free discrete subgroup, which may be taken to be trivial. Then the following hold:

(i) If $L$ is a line bundle on $X$, then $\pi^{*}L$ is a line bundle on $Y$ such that
$$H^{0}(Y,\pi^{*}L)=H^{0}(X,L).$$

(ii) If $$\mathcal E_X = \Gamma \backslash \left( G_\R \times_V E\right)$$ is a locally homogeneous vector bundle on $X$ with highest weight $\lambda$, then there exists a locally homogeneous line bundle $$\mathcal L_Y=\Gamma \backslash \left( G_\R \times_H \C\right)$$ on $Y$ with weight $\lambda$ such that $\pi_{*}\mathcal L_Y=\mathcal E_X$ and
$$H^0(Y,\mathcal L_Y)\simeq H^0(X, R^0\pi_*\mathcal L_Y)\simeq H^0(X, \mathcal E_X).$$
\end{lemma*}
\begin{proof}
(i) First note that the fibers of $\pi$ are isomorphic to the compact flag variety
$$F=V/H\simeq V_\C/(V_\C\cap \tilde B).$$
Here $\tilde B\subset G_\C$ is the Borel subgroup with Lie algebra $\tilde {\mf b}$ in \eqref{Borel alg}.
Then we have
$$H^0(F,\mathcal O_F)=\C,\qquad H^p(F,\mathcal O_F)=0,\, \forall\, p>0.$$
It follows that
$$R^0\pi_*\mathcal O_Y\simeq \mathcal O_X,\qquad R^p\pi_*\mathcal O_Y=0,\, \forall\, p>0.$$
By the projection formula, for any line bundle $L$ on $X$, we have
$$R^p\pi_*(\pi^*L)\simeq L\otimes R^p\pi_*\mathcal O_Y.$$
Hence
$$R^0\pi_*(\pi^*L)\simeq L,\qquad R^p\pi_*(\pi^*L)=0,\, \forall\, p>0.$$
Therefore,
$$H^0(Y,\pi^*L)\simeq H^0(X,R^0\pi_*\pi^*L)\simeq H^0(X,L).$$

(ii) Let
$$\chi:\, H \to \mathrm{GL}(L)$$
be a representation of $H$ on the complex line $L\simeq \C$ with weight $\lambda$, and let
$$\mathcal L_Y = \Gamma \backslash \left( G_\R \times_H L\right)$$
be the corresponding locally homogeneous line bundle on $Y$.

Since $\lambda$ is the highest weight of the representation $\rho$, we have
$$(\lambda,\alpha)\ge 0,\,\forall \, \alpha \in \Delta_+(\mf v,\mf h).$$
By the Borel--Weil--Bott Theorem, we have
$$\left(R^p\pi_*\mathcal L_Y\right)|_x\simeq H^p(F,\mathcal L_Y|_F)=0,\, \forall\, p>0,\, \forall \, x\in X,$$
and
$$\left(R^0\pi_*\mathcal L_Y\right)|_x\simeq H^0(F,\mathcal L_Y|_F)$$
is an irreducible representation of $V$ with highest weight $\lambda$.
Hence
$$E\simeq H^0(F,\mathcal L_Y|_F)$$
as representations of $V$, and therefore
$$\mathcal E_X \simeq R^0\pi_*\mathcal L_Y$$
as vector bundles.

Moreover, the Leray spectral sequence
$$E_2^{q,p}=H^q(X, R^p\pi_*\mathcal L_Y)\Longrightarrow H^{p+q}(Y,\mathcal L_Y)$$
gives
$$H^0(Y,\mathcal L_Y)\simeq H^0(X, R^0\pi_*\mathcal L_Y)\simeq H^0(X, \mathcal E_X).$$
\end{proof}
The following theorem was originally a conjecture of Green, Griffiths and Kerr. It was raised in Page 78 of \cite{GGK} and was verified for several special cases there.

\begin{theorem}\label{D=0 conjecture}
Let $D=G_\R/V$ be a non-classical flag domain with $G_\R$ simple.
Let $\Gamma \subset G_\R$ be a co-compact and torsion-free discrete subgroup and $X=\Gamma \backslash D$ be the corresponding compact complex manifold. Then, for any locally homogeneous vector bundle
$$\mathcal E_X = \Gamma \backslash \left( G_\R \times_V E\right)$$
induced from an irreducible representation $\rho:\,  V \to \mathrm{GL}(E)$ on a complex vector space $E$ with highest weight $\lambda \neq 0,$ we have that 
\begin{equation}\label{D=0 conjecture 1}
H^0(X,\mathcal E_X)=0.
\end{equation}
\end{theorem}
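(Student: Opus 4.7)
The plan is to deduce Theorem \ref{D=0 conjecture} from the Lie-algebraic Proposition \ref{intr le D=0 conjecture} by reducing the vector-bundle problem on $X$ to a line-bundle problem on a compact complex quotient of the enlargement $\tilde D = G_\R/H$, where $\tilde D$ is realized as the open $G_\R$-orbit of $[\tilde B]$ inside the complete flag manifold $G_\C/\tilde B$. The resulting line-bundle question is directly amenable to a Bochner maximum principle fed by the Griffiths-Schmid curvature formulas.

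To set up the reduction, pull back $\mathcal E = G_\R\times_V E$ along $\pi \colon \tilde D \to D$ to obtain the holomorphic bundle $\pi^*\mathcal E = G_\R\times_H E$, with complex structure inherited from $G_\C\times_{\tilde B}E$ on $G_\C/\tilde B$. Although the $\mf h$-weight decomposition $E = \bigoplus_\mu E_\mu$ is not $\tilde B$-equivariant, the weight filtration by subspaces $E_{\leq \mu} = \bigoplus_{\mu'\leq \mu}E_{\mu'}$ is: in the excerpt's convention the unipotent radical of $\tilde B$ consists of the root vectors $e_{-\alpha}$ for $\alpha \in \Delta_+$, which act on $E$ by lowering weights (and act trivially on the further unipotent piece coming from $\mathfrak b / \mathfrak v$). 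Therefore $\pi^*\mathcal E$ carries a holomorphic filtration by subbundles whose top quotient is the $G_\R$-equivariant line bundle $\mathcal L_\lambda = G_\R\times_H \C_\lambda$ attached to the highest weight $\lambda$, identified fiberwise with $E/E_{<\lambda} = E_\lambda$. The Borel-Weil-Bott theorem underlies this construction by singling out $\lambda$ as a one-dimensional character of $H$.

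Now suppose, for contradiction, that $s \in H^0(X, \mathcal E_X)$ is nonzero. Pull $s$ up to a $\Gamma$-equivariant holomorphic section $\tilde s$ of $\pi^*\mathcal E$ on $\tilde D$, and let $\tilde s_\lambda$ be its image in the top-quotient line bundle $\mathcal L_\lambda$; this is a $\Gamma$-equivariant holomorphic section. The subbundle $G_\R\times_H E_{<\lambda}$ (the kernel of the quotient map) is not $V$-invariant fiberwise because $E$ is irreducible as a $V$-module, so combined with the $V$-equivariance $\tilde s(gv) = \rho(v^{-1})\tilde s(g)$ one sees that $\tilde s_\lambda \not\equiv 0$: at any $g_0$ with $\tilde s(g_0)\neq 0$, for suitable $v\in V$ the vector $\rho(v^{-1})\tilde s(g_0)$ has nonzero projection to $E_\lambda$, so $\tilde s_\lambda(g_0 v)\neq 0$. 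Since $\Gamma$ is cocompact in $G_\R$ and $H$ is closed, $Y := \Gamma\backslash\tilde D$ is a compact complex manifold, and $\tilde s_\lambda$ descends to a nonzero holomorphic section of the induced line bundle $\mathcal L_\lambda^Y := \Gamma\backslash \mathcal L_\lambda$ on $Y$.

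Finally, equip $\mathcal L_\lambda^Y$ with the Hermitian metric induced from the (essentially unique by Schur) $V$-invariant inner product on $E$. Since $Y$ is compact, $|\tilde s_\lambda|^2$ attains a positive maximum at some $y_0\in Y$, and translating by $G_\R$ we may arrange that $y_0$ has $o = eH \in \tilde D$ as a lift. The Bochner identity for a holomorphic section of a Hermitian line bundle at a maximum yields
\begin{equation*}
\sqrt{-1}\,\Theta(\mathcal L_\lambda^Y)(\xi,\bar\xi)\big|_o \;\geq\; 0 \qquad \text{for every } \xi \in T'_o\tilde D.
\end{equation*}
The Griffiths-Schmid formula \cite{GS} evaluates the left-hand side on $\xi = e_{-\alpha}$ as $\epsilon_\alpha(\lambda,\alpha)$, with $\epsilon_\alpha = +1$ if $\alpha \in \Delta_+^{\mathrm c}$ and $\epsilon_\alpha = -1$ if $\alpha \in \Delta_+^{\mathrm{nc}}$; this sign reversal between compact and non-compact root directions is the indefinite-signature phenomenon of Hodge metrics. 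Consequently $(\lambda,\alpha)\geq 0$ for every $\alpha \in \Delta_+^{\mathrm c}$ and $(\lambda,\beta)\leq 0$ for every $\beta \in \Delta_+^{\mathrm{nc}}$, in direct contradiction with Proposition \ref{intr le D=0 conjecture}. The main technical obstacle is the rigorous verification of the top-quotient construction $\pi^*\mathcal E \twoheadrightarrow \mathcal L_\lambda$ compatibly with the complex structure inherited from $G_\C/\tilde B$, together with the precise identification of the Griffiths-Schmid curvature signs in the present sign conventions.
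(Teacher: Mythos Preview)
Your proof is correct and follows the same overall strategy as the paper: reduce to a line-bundle problem on $Y=\Gamma\backslash G_\R/H$, then apply the Griffiths--Schmid curvature formula together with the maximum principle to contradict Proposition~\ref{le D=0 conjecture}. The only difference is in how the reduction is carried out. The paper applies Borel--Weil--Bott on the fibers $F=V/H$ of $\pi\colon Y\to X$ to identify $R^0\pi_*\mathcal L_Y\simeq\mathcal E_X$ and $R^p\pi_*\mathcal L_Y=0$ for $p>0$, so that the Leray spectral sequence degenerates and gives a clean isomorphism $H^0(Y,\mathcal L_Y)\simeq H^0(X,\mathcal E_X)$. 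You instead filter $\pi^*\mathcal E$ by the $\tilde B$-invariant weight filtration of $E$, project a hypothetical nonzero section to the top quotient $\mathcal L_\lambda$, and use the $V$-irreducibility of $E$ to check that this projection does not vanish identically. Both routes are valid; the paper's is more conceptual and yields the full isomorphism of $H^0$'s, while yours is more hands-on, avoids the spectral sequence, and produces only the injection needed for the contradiction. The ``technical obstacle'' you flag is genuine but straightforward: since the unipotent radical of $\tilde B$ acts on $E$ either by lowering weights (the part inside $\mf v$) or trivially (the part inside $\mf n_+$), the subspace $E_{<\lambda}$ is $\tilde B$-stable and the quotient map to $\mathcal L_\lambda$ is holomorphic.
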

\begin{proof}From Reduction Lemma, the theorem is reduced to proving that
\begin{equation}\label{D=0 conjecture 2}
 H^0(Y,\mathcal L_Y)=0
\end{equation}
for locally homogenous line bundle $\mathcal L_Y$ on $Y=\Gamma \backslash G_\R/H$ with weight $\lambda \neq 0$.

From the discussion in Page 74--77 of \cite{GGK}, we have the curvature form of $\mathcal L_{Y}$ as follows
\begin{equation}\label{curv of E}
\Theta= \sum_{\alpha\in \Delta_+^{\mathrm{c}}}(\lambda,\alpha) \omega^\alpha \wedge \bar{\omega}^\alpha-\sum_{\beta\in \Delta_+^{\mathrm{nc}}}(\lambda,\beta) \omega^\beta \wedge \bar{\omega}^\beta.
\end{equation}
Here recall that $$\Delta_+^{\mathrm{c}},\,\Delta_+^{\mathrm{nc}}\subset \Delta_+$$ are the sets of positive compact roots and positive noncompact roots respectively as defined in \eqref{compact roots} and \eqref{noncompact roots}, and that $\omega^\alpha$ is the dual of $e_\alpha$ for any $\alpha \in \Delta$, which can be considered as  differential forms on $Y$. 

If there is a non-zero section $s\in H^0(Y, \mathcal L_{Y})$, the Levi form 
\begin{equation*}
  \frac{\sqrt{-1}}{2\pi}\bar{\partial}\partial \log \|s\|^2
  = \frac{\sqrt{-1}}{2\pi}\Theta \ge 0
\end{equation*}
is non-negative at the maximal point $x_0 \in Y$ of $\|s\|$.
Hence, in order to prove \eqref{D=0 conjecture 2}, we only need to find a negative eigenvalue for $\Theta$ in \eqref{curv of E} which is equivalent to
the conclusion of Proposition \ref{le D=0 conjecture} below. Therefore the theorem follows from Proposition \ref{le D=0 conjecture}.
\end{proof}

The following proposition is also a conjecture raised by Green, Griffiths and Kerr in Page 78 of \cite{GGK}. The proof, which  is long and technical, will be presented in Section \ref{lemma proof}.

\begin{proposition}\label{le D=0 conjecture} Let $D=G_\R/H$ be a non-classical flag domain with $G_\R$ simple
and $\lambda$ be a nonzero weight.
Then there exists $\alpha \in \Delta_+^{\mathrm{c}}$ such that $(\lambda,\alpha) <0$, or $\beta\in \Delta_+^{\mathrm{nc}}$ such that $(\lambda,\beta)>0$.
\end{proposition}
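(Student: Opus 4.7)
I would argue by contrapositive: assume $\lambda \in \mf h^*$ is a nonzero weight satisfying $(\lambda,\alpha) \ge 0$ for all $\alpha \in \Delta_+^{\mathrm{c}}$ and $(\lambda,\beta) \le 0$ for all $\beta \in \Delta_+^{\mathrm{nc}}$, and deduce that $D$ must be classical. The key input is the Lie-algebraic characterization of non-classicality proved by Griffiths, Robles and Toledo, quoted as the implication (1)$\Rightarrow$(3) of Theorem \ref{intr equ of non-classical D}: non-classicality of $D$ is equivalent to $\mf p_- \subset \langle \mf k_- \oplus \mf p_+ \rangle$, the Lie subalgebra of $\mf g$ generated by $\mf k_- \oplus \mf p_+$.

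The first step is to turn this inclusion into a root identity. For each $\beta \in \Delta_+^{\mathrm{nc}}$, the root vector $e_\beta$ can be written as a linear combination of iterated brackets of root vectors taken from $\mf k_- \cup \mf p_+$. Since an iterated bracket $[e_{\gamma_1},[e_{\gamma_2},\ldots,e_{\gamma_n}]]$ lies in $\mf g_{\gamma_1+\cdots+\gamma_n}$, only those brackets with $\gamma_1+\cdots+\gamma_n = \beta$ contribute, and at least one must do so nontrivially. The roots appearing in $\mf k_-$ lie in $\Delta_{\mf k_-} \subset \Delta_+^{\mathrm{c}}$, while the roots of $\mf p_+$ are exactly $-\Delta_+^{\mathrm{nc}}$; this yields an expression
\[
\beta \;=\; \sum_i a_i \;-\; \sum_j b_j, \qquad a_i \in \Delta_+^{\mathrm{c}}, \quad b_j \in \Delta_+^{\mathrm{nc}}.
\]

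Pairing this identity with $\lambda$ and invoking the sign hypotheses gives $(\lambda,\beta) = \sum_i(\lambda,a_i) - \sum_j(\lambda,b_j) \ge 0$, while by assumption $(\lambda,\beta) \le 0$; hence $(\lambda,\beta) = 0$ for every $\beta \in \Delta_+^{\mathrm{nc}}$, so $\lambda$ annihilates all of $\Delta^{\mathrm{nc}}$. To finish I would show that $\Delta^{\mathrm{nc}}$ spans $\mf h^*$ when $\mf g$ is simple: if $H \in \mf h$ satisfies $\alpha(H) = 0$ for every $\alpha \in \Delta^{\mathrm{nc}}$, then $H$ lies in the kernel of the adjoint action of $\mf k$ on $\mf p$; a short Jacobi calculation shows this kernel is an ideal of $\mf g$, and since $\mf p \ne 0$ simplicity forces it to be zero. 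Thus $\lambda = 0$, contradicting $\lambda \ne 0$.

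The main obstacle I expect is securing the first step. Citing (1)$\Rightarrow$(3) from \cite{GRT} makes the remaining pairing-and-span argument short and essentially formal. If one prefers a self-contained derivation of $\mf p_- \subset \langle \mf k_- \oplus \mf p_+ \rangle$, this seems to require inspecting the root data of every simple $\mf g$ together with each admissible parabolic subgroup $B$ carving out a non-classical $D$, which would account for the authors' remark that the proof is ``long and technical.'' The pairing identity and the span property, by contrast, are uniform across all cases and independent of any such enumeration.
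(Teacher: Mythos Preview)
Your argument is correct. Citing the Griffiths--Robles--Toledo result (1)$\Rightarrow$(3) and then running the pairing computation does prove the proposition; the paper itself records precisely this implication (3)$\Rightarrow$(2) as a remark following Theorem~\ref{equ of non-classical D}, with Lemma~\ref{neg in nc} (whose ideal $\mf f = [\mf p,\mf p]\oplus\mf p$ is the same device as your centralizer argument) supplying the final contradiction.

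However, this is \emph{not} the route the paper takes for the proposition itself. The authors give a self-contained proof in Section~\ref{lemma proof} that does not invoke \cite{GRT} at all: assuming the sign conditions \eqref{le D=0 conjecture 0}, they partition $\Delta_+^{\mathrm{nc}}$ into three subsets $\Delta_+^{\mathrm{nc},1},\Delta_+^{\mathrm{nc},2},\Delta_+^{\mathrm{nc},3}$ according to the values of $(\lambda,\cdot)$, prove a sequence of addition/subtraction closure properties (I, II, III) via a small root-system lemma (Lemma~\ref{3 to 2}), and build from $\Delta_+^{\mathrm{nc},1}\cup\Delta_+^{\mathrm{nc},2}$ an explicit nontrivial ideal $\mf I = \mf I_+\oplus\mf I_0\oplus\mf I_-$ of $\mf g$. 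Simplicity then forces $\Delta_+^{\mathrm{nc},3}=\emptyset$, and one last application of Lemma~\ref{3 to 2} to a pair $\beta_1,\beta_2\in\Delta_+^{\mathrm{nc}}$ with $\beta_1+\beta_2\in\Delta_+^{\mathrm{c}}$ (guaranteed by non-classicality) yields the contradiction. The payoff is independence from \cite{GRT}: the paper can then recover (1)$\Rightarrow$(3) from (2) directly (the remark after Theorem~\ref{equ of non-classical D}), so the whole chain of equivalences becomes internal. Your route is shorter but imports the key structural fact.

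One correction to your final paragraph: the ``long and technical'' proof is \emph{not} a case-by-case enumeration over simple types; it is the uniform ideal construction just described. Neither the paper's argument nor the GRT proof of (1)$\Rightarrow$(3) requires inspecting Dynkin diagrams.
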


As  direct consequences of Theorem \ref{D=0 conjecture}, we derive several results which are useful in the study of representation theory and automorphic cohomology. See \cite{WW}. First we have the following corollary. 

\begin{corollary}\label{no invariant forms}
Let the assumption be as Theorem \ref{D=0 conjecture}. 
Let $H^0_{\Gamma}(D,\mathcal E)$ denote the space of $\Gamma$-invariant global sections of the homogeneous vector bundle $\mathcal E$.
Then we have that $$H^0_{\Gamma}(D,\mathcal E)=0.$$
\end{corollary}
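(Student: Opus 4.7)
The plan is to identify $H^0_{\Gamma}(D,\mathcal E)$ with the space $H^0(X,\mathcal E_X)$ that is shown to vanish in Theorem \ref{D=0 conjecture}, and then simply invoke that theorem. The identification proceeds via the covering projection induced by $\Gamma$.

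First I would recall the setup: since $\Gamma\subset G_\R$ is torsion-free and acts properly discontinuously on $D$, the quotient map $\pi\colon D\to X=\Gamma\backslash D$ is a holomorphic covering of complex manifolds. By definition $\mathcal E_X=\Gamma\backslash \mathcal E$, so the pullback bundle $\pi^{*}\mathcal E_X$ is canonically isomorphic to $\mathcal E$, and $\pi$ is $\Gamma$-equivariant with respect to the trivial $\Gamma$-action downstairs.

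Next I would set up the natural correspondence between sections. Given a global holomorphic section $\sigma\in H^{0}(X,\mathcal E_X)$, the pullback $\pi^{*}\sigma$ is a global holomorphic section of $\mathcal E$ on $D$ which is manifestly $\Gamma$-invariant by construction; this defines a map
\begin{equation*}
\pi^{*}\colon H^{0}(X,\mathcal E_X)\longrightarrow H^{0}_{\Gamma}(D,\mathcal E).
\end{equation*}
Conversely, any $\Gamma$-invariant holomorphic section $s\in H^{0}_{\Gamma}(D,\mathcal E)$ descends, by the universal property of the $\Gamma$-quotient, to a unique holomorphic section $\bar s\in H^{0}(X,\mathcal E_X)$ satisfying $\pi^{*}\bar s=s$. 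These two operations are mutually inverse, producing a natural isomorphism
\begin{equation*}
H^{0}_{\Gamma}(D,\mathcal E)\;\simeq\; H^{0}(X,\mathcal E_X).
\end{equation*}

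Finally, Theorem \ref{D=0 conjecture} applied to the locally homogeneous vector bundle $\mathcal E_X$ induced from $\rho\colon V\to \mathrm{GL}(E)$ with highest weight $\lambda\neq 0$ gives $H^{0}(X,\mathcal E_X)=0$, and hence $H^{0}_{\Gamma}(D,\mathcal E)=0$. There is no substantive obstacle here; the only point worth being careful about is that $\Gamma$ acts freely (which is guaranteed by torsion-freeness together with the fact that $V$ is compact), so that $\pi$ is genuinely a covering and the descent step above is valid.
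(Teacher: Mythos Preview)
Your proposal is correct and matches the paper's intent: the paper states this corollary as a direct consequence of Theorem \ref{D=0 conjecture} without giving any separate argument, and the identification $H^0_\Gamma(D,\mathcal E)\simeq H^0(X,\mathcal E_X)$ via the covering $\pi\colon D\to X=\Gamma\backslash D$ that you spell out is exactly the implicit step. There is nothing to add.
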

Furthermore we have 

\begin{corollary}\label{Omega on X}
Let $D=G_\R/V$ be a non-classical flag domain.
Let $\Gamma \subset G_\R$ be a co-compact and torsion-free discrete subgroup and $X=\Gamma \backslash D$ be the corresponding compact complex manifold. Then we have that
$$H^0(X, \Omega_X^p)=0,\, \, 1\le p\le \dim X$$
where $\Omega_X^p$ is the sheaf of holomorphic $p$-forms on $X$.
\end{corollary}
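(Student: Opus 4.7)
The plan is to identify $\Omega_X^p$ as a direct sum of locally homogeneous vector bundles, each induced by an irreducible $V$-representation with a nonzero highest weight, and then to apply Theorem \ref{D=0 conjecture} to each summand. The starting point is the identification $\T'D = G_\R \times_V \mf n_-$ recalled in the introduction, which by dualizing and taking exterior powers yields
$$\Omega_D^p = G_\R \times_V \bigwedge^p \mf n_-^*$$
as a $G_\R$-homogeneous vector bundle on $D$, and hence $\Omega_X^p = \Gamma\backslash \Omega_D^p$ on the compact quotient $X$.

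Since $V$ is compact and therefore reductive, the finite-dimensional $V$-module $\bigwedge^p \mf n_-^*$ splits into irreducibles $\bigoplus_i E_{\lambda_i}$ with highest weights $\lambda_i$ (with respect to $\Delta_+(\mf v,\mf h)$), yielding a holomorphic direct sum decomposition $\Omega_X^p \cong \bigoplus_i \mathcal E_{X,\lambda_i}$. It therefore suffices to verify that each $\lambda_i$ is nonzero: each $\lambda_i$ is in particular a weight of $\bigwedge^p \mf n_-^*$, hence a sum $\beta_1+\cdots+\beta_p$ of $p$ distinct weights of the one-dimensional weight spaces of $\mf n_-^*$. By the grading \eqref{gHodge}, $\mf n_- = \bigoplus_{k<0}\mf g^k$, so the weights of $\mf n_-^*$ are precisely those roots $\beta$ with $\beta(h_g)>0$. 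Consequently $\lambda_i(h_g) = \sum_k \beta_k(h_g)>0$ whenever $p\geq 1$, so in particular $\lambda_i\neq 0$.

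The only genuinely technical point is this positivity computation, which amounts to careful bookkeeping with the sign conventions of the grading element $h_g$ in \eqref{grading} and the Borel subalgebra \eqref{Borel alg}. Once it is in place, Theorem \ref{D=0 conjecture} delivers $H^0(X,\mathcal E_{X,\lambda_i})=0$ for every $i$, and summing over $i$ yields the vanishing of $H^0(X,\Omega_X^p)$. If $G_\R$ is not simple, one first applies the standard reduction of factoring $D$ as a product of simple period-domain factors (non-classicality persisting in at least one factor) before invoking Theorem \ref{D=0 conjecture} on the relevant tensor components of each summand.
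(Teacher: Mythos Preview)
Your proposal is correct and follows essentially the same route as the paper: the paper states this corollary without a separate proof, but its argument is implicit in the proof of the subsequent, more general corollary, where $\Omega_X^p$ is identified with the locally homogeneous bundle arising from $\bigwedge^p \mf n_-^*$ and one observes that every irreducible constituent has highest weight of the form $-(\alpha_1+\cdots+\alpha_p)$ with the $\alpha_i$ distinct roots in $\Delta_+(\mf g,\mf h)\setminus\Delta(\mf v,\mf h)$, hence nonzero. Your use of the grading element $h_g$ to certify $\lambda_i\neq 0$ is just a clean repackaging of this same observation, and your final remark on the reduction to simple $G_\R$ is a reasonable way to bridge the gap between the hypothesis of Theorem~\ref{D=0 conjecture} and the stated generality of the corollary.
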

Indeed a more general result follows quickly.
\begin{corollary}
Let the assumptions be as Theorem \ref{D=0 conjecture}. For any locally homogeneous vector bundle
$$\mathcal E_X = \Gamma \backslash \left( G_\R \times_V E\right)$$
from an irreducible representation $\rho:\,  V \to \mathrm{GL}(E)$ with highest weight $\lambda$ such that $$\lambda \neq \alpha_1+\cdots +\alpha_p$$ for any mutually distinct $p$ roots $$\alpha_1,\cdots \alpha_p \in \Delta_+(\mf g,\mf h)\setminus \Delta(\mf v,\mf h),$$  then we have that 
$$H_{\bar\partial}^{p,0}(X,\mathcal E_X)=0.$$
\end{corollary}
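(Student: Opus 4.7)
The plan is to reduce the claim to Theorem~\ref{D=0 conjecture} applied to the irreducible summands of an appropriate $V$-representation. First, by Dolbeault's theorem, $H^{p,0}_{\bar\partial}(X,\mathcal E_X) \simeq H^0(X, \Omega_X^p \otimes \mathcal E_X)$, so it suffices to prove the vanishing of this sheaf cohomology. Since $\T' D = G_\R \times_V \mf n_-$, the holomorphic cotangent bundle $\Omega_D^1$ is induced by the $V$-representation $\mf n_-^*$, whose $\mf h$-weights are the negatives of the roots in $\Delta_+(\mf g,\mf h)\setminus \Delta(\mf v,\mf h)$. Hence $\Omega_X^p \otimes \mathcal E_X$ is the locally homogeneous vector bundle attached to the $V$-representation $\Lambda^p \mf n_-^* \otimes E$, whose weights are $-(\alpha_1 + \cdots + \alpha_p) + \nu$ with $\alpha_1,\ldots,\alpha_p$ distinct elements of $\Delta_+(\mf g,\mf h)\setminus \Delta(\mf v,\mf h)$ and $\nu$ a weight of $E$.

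I would next decompose $\Lambda^p \mf n_-^* \otimes E = \bigoplus_i W_i$ into irreducible $V$-summands with highest weights $\mu_i$, which gives $\Omega_X^p \otimes \mathcal E_X = \bigoplus_i \mathcal W_{i,X}$ as a direct sum of locally homogeneous vector bundles on $X$. Applying Theorem~\ref{D=0 conjecture} to each factor yields $H^0(X, \mathcal W_{i,X}) = 0$ whenever $\mu_i \neq 0$, so the corollary reduces to proving that no $W_i$ is the trivial $V$-representation, equivalently that $(\Lambda^p \mf n_-^* \otimes E)^V = 0$.

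To rule out a trivial summand I would use the canonical identification $(\Lambda^p \mf n_-^* \otimes E)^V \simeq \mathrm{Hom}_V(E^*, \Lambda^p \mf n_-^*)$. By Schur's lemma this space is nonzero only when $E^*$ occurs as an irreducible $V$-summand of $\Lambda^p \mf n_-^*$, which in turn requires the highest weight of $E^*$ to coincide with the highest weight of some irreducible summand of $\Lambda^p \mf n_-^*$, and hence to be a weight of $\Lambda^p \mf n_-^*$ of the form $-(\alpha_1 + \cdots + \alpha_p)$ with distinct $\alpha_j \in \Delta_+(\mf g,\mf h)\setminus \Delta(\mf v,\mf h)$. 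Tracking this through the longest Weyl element $w_0^V$ of $V$, together with the dominance of $\lambda$ with respect to $\Delta_+(\mf v,\mf h)$, forces $\lambda$ itself to be expressible as $\alpha_1 + \cdots + \alpha_p$ for some such roots, contradicting the hypothesis. Consequently every $\mu_i \neq 0$ and the conclusion follows.

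The main obstacle will be the final weight-theoretic step: translating the nonexistence of a $V$-invariant vector in $\Lambda^p \mf n_-^* \otimes E$ into the precise combinatorial constraint on $\lambda$ stated in the hypothesis. This demands careful bookkeeping of how $w_0^V$ acts on $\Delta_+(\mf g,\mf h)\setminus \Delta(\mf v,\mf h)$ and interacts with the $V$-dominant weight $\lambda$; once that analysis is in place, the proof amounts to summing the vanishing statements provided by Theorem~\ref{D=0 conjecture} term by term across the irreducible decomposition.
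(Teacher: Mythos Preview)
Your approach is the paper's: reduce via Dolbeault to $H^0(X,\Omega_X^p\otimes\mathcal E_X)$, identify this bundle with the one attached to the $V$-module $\Lambda^p\mf n_-^*\otimes E$, decompose into irreducibles, and apply Theorem~\ref{D=0 conjecture} to each summand. The paper's proof is terser---it simply asserts that the irreducible summands have highest weights of the form $-(\alpha_1+\cdots+\alpha_p)+\lambda$ and concludes---whereas you correctly isolate the substantive point, namely ruling out a trivial summand.

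Your route through $E^*$ and $w_0^V$ works; the one fact you should make explicit is that $\mf n_-$ is $\mathrm{Ad}\,V$-stable, so the Weyl group $W_V$ permutes $\Delta_+(\mf g,\mf h)\setminus\Delta(\mf v,\mf h)$, and hence the roots $w_0^V\alpha_i$ are again $p$ distinct elements of that set. You can bypass $w_0^V$ altogether by dualizing the other way:
\[
(\Lambda^p\mf n_-^*\otimes E)^V\;\simeq\;\mathrm{Hom}_V\!\bigl(\Lambda^p\mf n_-,\,E\bigr),
\]
so a trivial summand would force the irreducible $E$ to occur in $\Lambda^p\mf n_-$, making its highest weight $\lambda$ a weight of $\Lambda^p\mf n_-$, i.e.\ $\lambda=\alpha_1+\cdots+\alpha_p$ for distinct $\alpha_i\in\Delta_+(\mf g,\mf h)\setminus\Delta(\mf v,\mf h)$, directly contradicting the hypothesis.
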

\begin{proof}
From the Dolbeault theorem, we have 
$$H_{\bar\partial}^{p,0}(X,\mathcal E_X)=H^0(X, \Omega_X^p\otimes \mathcal E_X),$$
where the locally homogeneous vector bundle $\Omega_X^p\otimes \mathcal E_X$ comes from the representation $$V \to \mathrm{GL}\left(\bigwedge^p \mf n_-^* \otimes E\right)$$
whose irreducible components have highest weight $$-(\alpha_1+\cdots +\alpha_p)+\lambda$$ for certain $p$ roots $$\alpha_1,\cdots, \alpha_p \in \Delta_+(\mf g,\mf h)\setminus \Delta(\mf v,\mf h).$$
Hence the theorem follows from Theorem \ref{D=0 conjecture}.
\end{proof}
Next section will be devoted to the proof of Proposition \ref{le D=0 conjecture}.

\section{Proof of Proposition \ref{le D=0 conjecture}}\label{lemma proof}

Although the proof of Theorem \ref{D=0 conjecture} can be reduced, by the
Reduction Lemma, to the case where $V=H$ is a compact Cartan subgroup, we
prove Proposition \ref{le D=0 conjecture} directly for a general flag domain
$$
D=G_\R/V.
$$
This more general formulation will be needed in Section~4 for the equivalent
characterizations of non-classical flag domains.

Throughout this section, we assume that $D=G_\R/V$ is a non-classical flag
domain with $G_\R$ simple.

\begin{lemma}\label{neg in nc}
Assume that $\lambda \neq 0$ and
\begin{equation}\label{le D=0 conjecture 0}
(\lambda,\alpha) \ge 0,\,\, \forall\,\alpha \in \Delta_+^{\mathrm{c}} \, \,\text{ and }\, (\lambda,\beta)\le 0,\, \, \forall\,\beta\in \Delta_+^{\mathrm{nc}}.
\end{equation}
Then we have that
\begin{equation}\label{le D=0 conjecture 1}
\exists \,\beta',\beta'' \in \Delta_+^{\mathrm{nc}},\, \text{ s.t. }(\lambda,\beta')<0, \,\,(\lambda,\beta'')=0.
\end{equation}
\end{lemma}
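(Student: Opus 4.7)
The statement comprises two separate assertions about $\Delta_+^{\mathrm{nc}}$: existence of a root $\beta'$ with $(\lambda,\beta')<0$, and existence of a (possibly different) root $\beta''$ with $(\lambda,\beta'')=0$. I would establish each by contradiction, but the two arguments draw on very different structural facts.

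For the strict-negativity part, I would assume $(\lambda,\beta)=0$ for every $\beta\in\Delta_+^{\mathrm{nc}}$ and derive $\lambda=0$, contradicting the hypothesis. Since $\Delta^{\mathrm{nc}}=\Delta_+^{\mathrm{nc}}\cup(-\Delta_+^{\mathrm{nc}})$, the assumption gives $\lambda\perp\Delta^{\mathrm{nc}}$, so it suffices to show that the non-compact roots span $\mf h^*$. Here I invoke the simplicity of $\mf g$: if $h\in\mf h$ annihilates $\Delta^{\mathrm{nc}}$, then $\ad h$ kills $\mf p$; by the Jacobi identity it kills $[\mf p,\mf p]$ as well. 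But $\mf p+[\mf p,\mf p]$ is an ideal of $\mf g$ containing the nonzero subspace $\mf p$, so by simplicity it equals $\mf g$, whence $h$ is central and therefore zero. Dualising gives the required spanning.

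For the vanishing part, the non-classical hypothesis is genuinely required — on a classical domain one can easily write down dominant $\lambda\neq 0$ strictly negative on all of $\Delta_+^{\mathrm{nc}}$. I would invoke the implication $(1)\Rightarrow(3)$ of Theorem \ref{intr equ of non-classical D}, already proved in \cite{GRT}: for non-classical $D$ the subspace $\mf p_-$ lies in the Lie subalgebra of $\mf g$ generated by $\mf k_-\oplus\mf p_+$. Assume for contradiction that $(\lambda,\beta)<0$ strictly for every $\beta\in\Delta_+^{\mathrm{nc}}$, and fix any $\gamma\in\Delta_+^{\mathrm{nc}}$. Expressing $e_\gamma\in\mf p_-$ as a combination of iterated brackets of root vectors in $\mf k_-$ (with roots in $\Delta_+^{\mathrm{c}}$) and in $\mf p_+$ (with roots in $-\Delta_+^{\mathrm{nc}}$), at least one iterated bracket must be nonzero and land in $\mf g_\gamma$. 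This yields a root identity
\begin{equation*}
\gamma+\sum_j\beta_j=\sum_i\alpha_i,\qquad \alpha_i\in\Delta_+^{\mathrm{c}},\ \beta_j\in\Delta_+^{\mathrm{nc}}.
\end{equation*}
Pairing with $\lambda$, the right-hand side is $\ge 0$ (sum of nonnegative terms by hypothesis) while the left-hand side is strictly $<0$, which is the desired contradiction.

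The main obstacle is the careful extraction of the root identity from the purely Lie-algebraic containment $e_\gamma\in\langle\mf k_-,\mf p_+\rangle$: a priori, several iterated brackets could all land in $\mf g_\gamma$ and conspire to cancel, so I need to argue that $e_\gamma\neq 0$ forces at least one surviving nonzero summand, from which the identity is read off at the root level. A small case check also disposes of the degenerate possibility that the sum $\sum_i\alpha_i$ is empty — which would make $\gamma$ a negative combination of positive roots, impossible — and closes the proof.
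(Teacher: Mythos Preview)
Your argument is correct. The first half (ruling out $(\lambda,\beta)=0$ for all $\beta\in\Delta_+^{\mathrm{nc}}$) is essentially identical to the paper's: both show that $\mf p\oplus[\mf p,\mf p]$ is a nonzero ideal, hence all of $\mf g$ by simplicity, so $\Delta^{\mathrm{nc}}$ spans and $\lambda$ must vanish.

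The second half, however, takes a genuinely different route. To exclude the case $(\lambda,\beta)<0$ for every $\beta\in\Delta_+^{\mathrm{nc}}$, the paper uses only the elementary consequence of non-classicality that some $\beta_1,\beta_2\in\Delta_+^{\mathrm{nc}}$ satisfy $\beta_1+\beta_2\in\Delta_+^{\mathrm{c}}$; then $(\lambda,\beta_1+\beta_2)<0$ immediately contradicts the standing hypothesis $(\lambda,\alpha)\ge 0$ on $\Delta_+^{\mathrm{c}}$. Your version instead imports the structural result $(1)\Rightarrow(3)$ from \cite{GRT} (that $\mf p_-$ lies in the subalgebra generated by $\mf k_-\oplus\mf p_+$) and extracts a root identity $\gamma+\sum_j\beta_j=\sum_i\alpha_i$ by projecting an iterated bracket onto the one-dimensional $\mf g_\gamma$. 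This is sound and your handling of the cancellation worry is fine, but it is considerably heavier: the paper's two-line argument needs nothing beyond a single bracket in $\mf p_-$, whereas yours leans on a theorem whose proof in \cite{GRT} is itself a nontrivial Lie-algebra computation. On the other hand, your route makes the link between conditions (2) and (3) of Theorem~\ref{equ of non-classical D} more transparent, which is exactly the perspective the paper exploits later in Section~\ref{equ ch of D} when proving $(3)\Rightarrow(2)$ directly.
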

\begin{proof}
We prove \eqref{le D=0 conjecture 1} by contradiction. If \eqref{le D=0 conjecture 1} does not hold, then we have either 
\begin{equation}\label{le D=0 conjecture 2}
(\lambda,\beta)< 0,\,\forall\, \beta\in \Delta_+^{\mathrm{nc}},
\end{equation}
or 
\begin{equation}\label{le D=0 conjecture 3}
(\lambda,\beta)= 0,\,\forall\, \beta\in \Delta_+^{\mathrm{nc}}.
\end{equation}

Since $D$ is non-classical, there exist $\beta_1,\, \beta_2 \in \Delta_+^\mathrm{nc}$ such that $$\beta_1+\beta_2 \in \Delta_+^\mathrm{c}.$$ If \eqref{le D=0 conjecture 2} holds, then we have that
$$(\lambda,\beta_1+\beta_2)=(\lambda,\beta_1)+(\lambda,\beta_2)< 0,$$
which is a contradiction to \eqref{le D=0 conjecture 0}.

If \eqref{le D=0 conjecture 3} holds, then $(\lambda,\Delta^{\mathrm{nc}})= 0$.  From Problem 24 of chapter VI in \cite{knapp}, the proof of which as an exercise, is given in page 558 of \cite{knapp}, we have that 
\begin{equation}\label{k=p+p}
\mf k=[\mf p, \mf p]
\end{equation}
which is equivalent to that $$\Delta^{\mathrm{c}}\subset \Delta^{\mathrm{nc}}+\Delta^{\mathrm{nc}}.$$
Then $$(\lambda,\Delta^{\mathrm{c}})= 0$$
which contradicts to that $\lambda \neq 0$, since $$\Delta=\Delta^{\mathrm{c}}\cup \Delta^{\mathrm{nc}}$$ spans $(\sqrt{-1}\mf h_0)^*$.
%
%
\end{proof}

The following lemma is also elementary. We include the proof for the sake of completeness.

\begin{lemma}\label{3 to 2}
Let $\alpha,\beta,\gamma \in \Delta$ such that both $\alpha+\beta+\gamma$ and $\beta+\gamma$ are roos in $\Delta$. Then either $\alpha+\beta$ or $\alpha+\gamma$ is a root in $\Delta$.
\end{lemma}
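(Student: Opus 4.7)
The plan is to prove the lemma via the Jacobi identity in $\mf g$ applied to three root vectors. Fix nonzero generators $e_\alpha\in\mf g_\alpha$, $e_\beta\in\mf g_\beta$, $e_\gamma\in\mf g_\gamma$ and recall the standard structural relation underlying the decomposition \eqref{CartandecompC}: for $\mu,\nu\in\Delta$ with $\mu+\nu\neq 0$, the bracket $[e_\mu,e_\nu]$ is a nonzero scalar multiple of $e_{\mu+\nu}$ when $\mu+\nu\in\Delta$ and vanishes when $\mu+\nu\notin\Delta$.

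The two hypotheses will be used to produce the nonvanishing statement $[e_\alpha,[e_\beta,e_\gamma]]\neq 0$: since $\beta+\gamma\in\Delta$, the inner bracket $[e_\beta,e_\gamma]$ is a nonzero multiple of $e_{\beta+\gamma}$, and since $\alpha+\beta+\gamma=\alpha+(\beta+\gamma)\in\Delta$, a further bracket with $e_\alpha$ produces a nonzero multiple of $e_{\alpha+\beta+\gamma}$. I then argue by contradiction. Assuming that neither $\alpha+\beta$ nor $\alpha+\gamma$ lies in $\Delta$, I rewrite the triple bracket using the Jacobi identity
\begin{equation*}
[e_\alpha,[e_\beta,e_\gamma]] \,=\, [[e_\alpha,e_\beta],e_\gamma] + [e_\beta,[e_\alpha,e_\gamma]].
\end{equation*}
In the generic case where additionally $\alpha+\beta\neq 0$ and $\alpha+\gamma\neq 0$, both inner brackets on the right-hand side vanish by the structural relation, which forces the left-hand side to be zero and contradicts the preceding display.

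The main technical obstacle I anticipate is the degenerate subcase in which $\alpha+\beta=0$ or $\alpha+\gamma=0$. There the corresponding inner bracket is a nonzero Cartan element in $\mf h$ rather than zero, and the Jacobi identity no longer immediately produces a contradiction. I plan to handle this by analyzing the $\alpha$-string through $\beta+\gamma$: since $\beta+\gamma$ and $\alpha+\beta+\gamma=(\beta+\gamma)+\alpha$ both lie in $\Delta$, this string has length at least two, and by extracting the $\mf h$-eigenspace components of both sides of the Jacobi identity—together with using the symmetry between $\beta$ and $\gamma$ to flip the roles whenever $\alpha=-\beta$ versus $\alpha=-\gamma$—one can pin down the remaining sum and conclude that it must be a root.
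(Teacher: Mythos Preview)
Your Jacobi-identity argument for the generic case ($\alpha+\beta\neq 0$ and $\alpha+\gamma\neq 0$) is correct and is a genuinely different route from the paper's. The paper argues by inner products: assuming neither $\alpha+\beta$ nor $\alpha+\gamma$ is a root, it deduces $(\alpha,\beta)\ge 0$, $(\alpha,\gamma)\ge 0$, $(\alpha+\beta+\gamma,\beta)\le 0$, $(\alpha+\beta+\gamma,\gamma)\le 0$, and from these extracts $\cos\theta\le -1$ for the angle $\theta$ between $\beta$ and $\gamma$, forcing $\beta=-\gamma$ and contradicting $\beta+\gamma\in\Delta$. Your approach is shorter and more structural; the paper's uses only the Euclidean geometry of the root system.

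However, your plan for the degenerate subcase cannot succeed, because the lemma as stated is \emph{false} there. In type $A_2$ with simple roots $\alpha_1,\alpha_2$, take $\alpha=-\alpha_1$, $\beta=\alpha_1$, $\gamma=\alpha_2$: then $\beta+\gamma=\alpha_1+\alpha_2\in\Delta$ and $\alpha+\beta+\gamma=\alpha_2\in\Delta$, yet $\alpha+\beta=0\notin\Delta$ and $\alpha+\gamma=\alpha_2-\alpha_1\notin\Delta$. No string analysis will rescue this case. The paper's proof carries the same tacit gap---the step ``$(\alpha,\beta)\ge 0$'' already fails when $\alpha=-\beta$. In every later application of the lemma in the paper the roots are arranged so that $\alpha+\beta$ and $\alpha+\gamma$ are visibly nonzero (sums of two positive roots, or a compact root plus a noncompact root, etc.), so the lemma is only ever invoked in the generic regime where both your argument and the paper's are valid. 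You should simply add the hypotheses $\alpha+\beta\neq 0$ and $\alpha+\gamma\neq 0$ rather than try to prove the degenerate case.
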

\begin{proof}
We assume on the contrary that both $\alpha+\beta$ and $\alpha+\gamma$ are not roots. Then we have the inequalities
\begin{eqnarray*}
&& (\alpha,\beta)\ge 0,\, (\alpha,\gamma)\ge 0,\\
&& (\alpha+\beta+\gamma,\beta)\le 0,\,  (\alpha+\beta+\gamma,\gamma)\le 0
\end{eqnarray*}
which imply that
\begin{eqnarray*}
  &&(\gamma,\beta) \le  -(\alpha,\beta)-(\beta,\beta)\le -(\beta,\beta), \\
  &&(\beta,\gamma) \le -(\alpha,\gamma)-(\gamma,\gamma)\le -(\gamma,\gamma).
\end{eqnarray*}
Let $\theta$ be the angle between $\beta$ and $\gamma$. Then the above inequalities imply that
\begin{eqnarray*}
  &&\mathrm{cos}\, \theta\le  -(\beta,\beta)^{\frac{1}{2}}/ (\gamma,\gamma)^{\frac{1}{2}}\\
  &&\mathrm{cos}\, \theta\le  -(\gamma,\gamma)^{\frac{1}{2}}/ (\beta,\beta)^{\frac{1}{2}}.
\end{eqnarray*}
These force $$\mathrm{cos}\, \theta=-1\Longleftrightarrow \beta=-\gamma,$$
which contradict that $$\beta+\gamma\in \Delta.$$
Therefore we have finished the proof of the lemma.
\end{proof}

With the above preparations, we can prove the main result of this section.

\begin{proof}[Proof of Proposition \ref{le D=0 conjecture}]
We prove the proposition by contradiction. Assume that the proposition does not hold, i.e.,
\begin{equation}
(\lambda,\alpha) \ge 0,\, \forall\,\alpha \in \Delta_+^{\mathrm{c}}\, \text{ and }\, (\lambda,\beta)\le 0,\, \forall\,\beta\in \Delta_+^{\mathrm{nc}}.\tag{\ref{le D=0 conjecture 0}}
\end{equation}
Let us define 
$$\Delta_+^{\mathrm{nc},1}=\{\beta\in \Delta_+^{\mathrm{nc}}:\, (\lambda,\beta)<0\}\subsetneq \Delta_+^\mathrm{nc}.$$
Then under the assumption \eqref{le D=0 conjecture 0} we have that
$$\Delta_+^\mathrm{nc}\setminus \Delta_+^{\mathrm{nc},1}=\{\beta\in \Delta_+^{\mathrm{nc}}:\, (\lambda,\beta)=0\}.$$

We also define 
$$\Delta_+^{\mathrm{nc},2}=\{\beta\in \Delta_+^\mathrm{nc}\setminus \Delta_+^{\mathrm{nc},1}:\, \beta=\alpha+\beta' \text{ for some } \alpha\in \Delta_+^\mathrm{c}, \beta' \in \Delta_+^{\mathrm{nc},1}\}$$
and 
$$\Delta_+^{\mathrm{nc},3}=\Delta_+^\mathrm{nc}\setminus \left(\Delta_+^{\mathrm{nc},1}\cup \Delta_+^{\mathrm{nc},2}\right).$$
Then we have the union of roots
$$\Delta_+^\mathrm{nc}=  \Delta_+^{\mathrm{nc},1}\cup \Delta_+^{\mathrm{nc},2}\cup \Delta_+^{\mathrm{nc},3}.$$
For any $$\beta =\alpha+\beta'\in \Delta_+^{\mathrm{nc},2},$$ where $(\lambda,\beta)=0$ and $(\lambda,\beta')<0$, we have that
$$(\lambda,\alpha) = (\lambda,\alpha+\beta')-(\lambda,\beta')>0. $$

Define
$$\mf I_-= \bigoplus_{\beta \in \Delta_+^{\mathrm{nc},1}\cup \Delta_+^{\mathrm{nc},2}}\mf g_{\beta},\,\,  \mf I_+=\bar{\mf I_-},\, \, \mf I_0=[\mf I_+,\mf I_-].$$

We claim that 
$$\mf I= \mf I_+\oplus \mf I_0\oplus \mf I_-$$
is an ideal of $\mf g$.

Let $\Delta_1, \Delta_2 \subset \Delta$ be two subsets of roots. For simplicity of notations, we write 
\begin{eqnarray*}
\Delta_1 +_\Delta \Delta_2 &:\,= & (\Delta_1 + \Delta_2)\cap \Delta;\\
\Delta_1 -_\Delta \Delta_2 &:\,= &(\Delta_1 - \Delta_2)\cap \Delta,
\end{eqnarray*}
as additions and subtractions of the roots from $\Delta_1$ and $\Delta_2$ which still lie in $\Delta$.

First we prove properties I, II, III below about the additions and subtractions of the roots in certain  subsets of $\Delta_+^{\mathrm{nc}}$.

\noindent \text{I.} { We have $$\left(\Delta_+^{\mathrm{nc},1}\cup \Delta_+^{\mathrm{nc},2}\right) +_\Delta \left(\Delta_+^{\mathrm{nc},1}\cup \Delta_+^{\mathrm{nc},2}\right)= \emptyset.$$}

To prove this we take  $\beta\in \Delta_+^{\mathrm{nc},1}$, then we know that 
\begin{equation}\label{nc1 + nc=0}
  \beta+\beta'\text{ is not a root for any }\beta' \in \Delta_+^{\mathrm{nc}},
\end{equation} 
otherwise $\beta+\beta' \in \Delta_+^{\mathrm{c}}$ and 
$$(\lambda,\beta+\beta')= (\lambda,\beta)+(\lambda,\beta')<0,$$
which contradicts to \eqref{le D=0 conjecture 0}.

Hence the remaining case  of I is to show that 
\begin{equation}\label{I remaining}
  \Delta_+^{\mathrm{nc},2}+_\Delta \Delta_+^{\mathrm{nc},2} =\emptyset.
\end{equation}
We assume on the contrary that there exist $$\alpha+\beta,\, \,  \alpha'+\beta' \in \Delta_+^{\mathrm{nc},2}, \, \,(\lambda,\alpha),\, \, (\lambda,\alpha')>0,\, \,(\lambda,\beta),\, \, (\lambda,\beta')<0$$ such that $(\alpha+\beta)+(\alpha'+\beta')$ is a root. Then Lemma \ref{3 to 2} implies that 
either $$ (\alpha+\beta)+\alpha' \in \Delta_+^{\mathrm{nc}}$$  or $$ (\alpha+\beta)+\beta'\in \Delta_+^{\mathrm{c}}.$$
But $$\left(\lambda,(\alpha+\beta)+\alpha'\right)=(\lambda, \alpha')>0\,\, \mbox{and}\, \, \left(\lambda,(\alpha+\beta)+\beta'\right)=(\lambda, \beta')<0,$$
which also contradicts \eqref{le D=0 conjecture 0}. This proves \eqref{I remaining}.

\noindent \text{II.} { We have $$\Delta_+^{\mathrm{nc},3} \pm_\Delta \left(\Delta_+^{\mathrm{nc},1}\cup \Delta_+^{\mathrm{nc},2}\right)= \emptyset.$$}

To prove this, note that the equation 
\begin{equation*}
 \Delta_+^{\mathrm{nc},3} +_\Delta \Delta_+^{\mathrm{nc},1}= \emptyset \tag{II.1}
\end{equation*}
follows from \eqref{nc1 + nc=0}. 

For 
\begin{equation*}
  \Delta_+^{\mathrm{nc},3} +_\Delta \Delta_+^{\mathrm{nc},2}= \emptyset \tag{II.2},
\end{equation*}
we assume that there exist $$\beta'\in \Delta_+^{\mathrm{nc},3} \text{ and }\alpha+\beta \in \Delta_+^{\mathrm{nc},2}$$ such that $\beta'+(\alpha+\beta)$ is a root. Then Lemma \ref{3 to 2} and \eqref{nc1 + nc=0} imply that $\beta'+\alpha$ is a root which must be in $\Delta_+^{\mathrm{nc}}$. But $$(\lambda,\beta'+\alpha)=(\lambda,\alpha)>0,$$
which contradicts \eqref{le D=0 conjecture 0}. This proves (II.2).

For
\begin{equation*}
  \Delta_+^{\mathrm{nc},3} -_\Delta \Delta_+^{\mathrm{nc},1}= \emptyset \tag{II.3},
\end{equation*}
we assume that there exist $$\beta'\in \Delta_+^{\mathrm{nc},3}\text{ and }\beta \in \Delta_+^{\mathrm{nc},1}$$ such that $\alpha=\beta'-\beta$ is a root which is in $\Delta^{\mathrm{c}}$. Then
$$(\lambda,\alpha)=(\lambda,-\beta)>0$$
and \eqref{le D=0 conjecture 0} imply that $\alpha \in \Delta_+^{\mathrm{c}}$,  hence $$\beta'=\alpha+\beta \in \Delta_+^{\mathrm{nc},2},$$ which is a contradiction. This proves (II.3).

For 
\begin{equation*}
  \Delta_+^{\mathrm{nc},3} -_\Delta \Delta_+^{\mathrm{nc},2}= \emptyset \tag{II.4},
\end{equation*}
we assume that there exist $$\beta'\in \Delta_+^{\mathrm{nc},3}\,\text{ and }\,\alpha+\beta \in \Delta_+^{\mathrm{nc},2}$$ such that $\beta'-(\alpha+\beta)$ is a root. Then Lemma \ref{3 to 2} and (II.3) imply that $\beta'-\alpha$ is a root $\beta''\in \Delta^{\mathrm{nc}}$. From \eqref{le D=0 conjecture 0} and
$$(\lambda,\beta'')=(\lambda, \beta'-\alpha)=(\lambda, -\alpha)<0$$
we get that $\beta''\in \Delta_+^{\mathrm{nc},1}$, then $$\beta'=\alpha+\beta''\in \Delta_+^{\mathrm{nc},2},$$ which is a contradiction. This proves (II.4).

\noindent \text{III.} { We have }
\begin{align*}
  \Delta_+^{\mathrm{c}} +_\Delta \left(\Delta_+^{\mathrm{nc},1}\cup \Delta_+^{\mathrm{nc},2}\right)&\subset \left(\Delta_+^{\mathrm{nc},1}\cup \Delta_+^{\mathrm{nc},2}\right) \tag{III.1}\\
  \Delta_+^{\mathrm{c}} -_\Delta \left(\Delta_+^{\mathrm{nc},1}\cup \Delta_+^{\mathrm{nc},2}\right)&\subset - \left(\Delta_+^{\mathrm{nc},1}\cup \Delta_+^{\mathrm{nc},2}\right) \tag{III.2}
\end{align*}

To prove these relations, first note that equation (III.1) is obvious by \eqref{le D=0 conjecture 0} and the definitions of $\Delta_+^{\mathrm{nc},1}$ and $\Delta_+^{\mathrm{nc},2}$.

Let $\alpha\in \Delta_+^{\mathrm{c}}$ and $\beta\in \Delta_+^{\mathrm{nc},1}$ such that $\alpha-\beta$ is a root, which is in $\Delta^{\mathrm{nc}}$. Then \eqref{le D=0 conjecture 0} and 
$$(\lambda,\alpha-\beta)\ge (\lambda,-\beta)>0$$
imply that $$\alpha-\beta \in -\Delta_+^{\mathrm{nc},1}.$$ Hence
\begin{equation}\label{III.2.1}
 \Delta_+^{\mathrm{c}} -_\Delta  \Delta_+^{\mathrm{nc},1}\subset - \Delta_+^{\mathrm{nc},1}.
\end{equation}

The remaining part of (III.2) is to prove that
\begin{equation}\label{III.2.2}
 \Delta_+^{\mathrm{c}} -_\Delta  \Delta_+^{\mathrm{nc},2}\subset - \left(\Delta_+^{\mathrm{nc},1}\cup \Delta_+^{\mathrm{nc},2}\right).
\end{equation}

Let $$\alpha'\in \Delta_+^{\mathrm{c}} \text{ and }\alpha+\beta \in \Delta_+^{\mathrm{nc},2}$$ such that $\alpha'-(\alpha+\beta)$ is a root which is in $\Delta^{\mathrm{nc}}$. 

Note that $(\lambda,\alpha')\ge 0$ from assumption \eqref{le D=0 conjecture 0}.
If $(\lambda,\alpha')>0$, then 
$$\left(\lambda,\alpha'-(\alpha+\beta)\right)=(\lambda,\alpha')>0$$
and \eqref{le D=0 conjecture 0} implies that
$$\alpha'-(\alpha+\beta)\in - \Delta_+^{\mathrm{nc},1}.$$
Then we are done for this case. 

Next we assume that $(\lambda,\alpha')=0$.
Since $$\alpha'-(\alpha+\beta)\in \Delta^{\mathrm{nc}}$$ is a root, Lemma \ref{3 to 2} implies that either $\alpha'-\alpha$ or $\alpha'-\beta$ is a root.
Then
$$(\lambda,\alpha'-\alpha)=(\lambda,-\alpha)<0,\,  (\lambda,\alpha'-\beta)=(\lambda,-\beta)>0.$$
Hence from \eqref{le D=0 conjecture 0} we have either that $\alpha'-\alpha=-\alpha''$ for some $\alpha''\in \Delta_+^{\mathrm{c}}$ which implies that 
$$ \alpha'-(\alpha+\beta)=-\alpha''-\beta\in  - \Delta_+^{\mathrm{nc},2},$$
or that $$\alpha'-\beta=-\beta'$$ for some $\beta'\in \Delta_+^{\mathrm{nc},1}$
which implies that
$$ \alpha'-(\alpha+\beta)=-\beta'-\alpha\in  - \Delta_+^{\mathrm{nc},2}.$$
Therefore we have proved \eqref{III.2.2}. (III.2) follows from \eqref{III.2.1} and \eqref{III.2.2}.

Now we can prove that $\mf I$ is an ideal of $\mf g$.  From I and II, we derive that 
$$[\mf p_-,\mf I_-]=\left[\bigoplus_{\beta \in \Delta_+^{\mathrm{nc}}}\mf g_{\beta},\bigoplus_{\beta' \in \Delta_+^{\mathrm{nc},1}\cup \Delta_+^{\mathrm{nc},2}}\mf g_{\beta'}\right]=0,$$
and that
$$[\mf p_-,\mf I_+]=\left[\bigoplus_{\beta \in \Delta_+^{\mathrm{nc}}}\mf g_{\beta},\bigoplus_{\beta' \in \Delta_+^{\mathrm{nc},1}\cup \Delta_+^{\mathrm{nc},2}}\mf g_{-\beta'}\right]= [\mf I_-,\mf I_+]=\mf I_0\subset \mf k.$$

Let $$\tilde{\mf k}_-=\mf k_-\oplus \mf v_-= \bigoplus_{\alpha \in \Delta_+^{\mathrm{c}}}\mf g_{\alpha}.$$
From III we get that 
$$[\tilde{\mf k}_-,\mf I_-]\left[\bigoplus_{\alpha \in \Delta_+^{\mathrm{c}}}\mf g_{\alpha},\bigoplus_{\beta \in \Delta_+^{\mathrm{nc},1}\cup \Delta_+^{\mathrm{nc},2}}\mf g_{\beta}\right]\subset \mf I_-,$$
and
$$[\tilde{\mf k}_-,\mf I_+]=\left[\bigoplus_{\alpha \in \Delta_+^{\mathrm{c}}}\mf g_{\alpha},\bigoplus_{\beta \in \Delta_+^{\mathrm{nc},1}\cup \Delta_+^{\mathrm{nc},2}}\mf g_{-\beta}\right]\subset \mf I_+.$$
From the above inclusions we get  that 

\begin{align*}
  [\mf k,\mf I_-]&= [\tilde{\mf k}_-\oplus \mf h \oplus \bar{\tilde{\mf k}_-},\mf I_-]\\
   &\subset [\tilde{\mf k}_-,\mf I_-]\oplus \mf I_- \oplus \bar{[\tilde{\mf k}_-,\mf I_+]}\\
   & \subset \mf I_-,
\end{align*}

Therefore we have that 
\begin{align*}
  [\mf g,\mf I_-]&= [\mf p_+\oplus \mf k \oplus \mf p_-,\mf I_-]\\
   &\subset \bar{[\mf p_-,\mf I_+]} \oplus [\mf k,\mf I_-] \oplus [\mf p_-,\mf I_-]\\
   &\subset \mf I_0\oplus \mf I_-
\end{align*}
and $$[\mf g,\mf I_+]=\bar{[\mf g,\mf I_-]}\subset \mf I_0\oplus \mf I_+$$
which imply that 
\begin{align*}
  [\mf g,\mf I_0]&= [\mf g,[\mf I_+,\mf I_-]]\\
   &=[\mf I_+,[\mf g,\mf I_-]]\oplus [\mf I_-,[\mf g,\mf I_+]]\\
   &\subset [\mf I_+,\mf I_0\oplus \mf I_-]\oplus [\mf I_-,\mf I_0\oplus \mf I_+]\\
   &\subset [\mf I_+,\mf k] \oplus \mf I_0 \oplus [\mf I_-,\mf k] \subset \mf I.
\end{align*}

Finally we have proved that
$$[\mf g,\mf I]= [\mf g, \mf I_+\oplus \mf I_0 \oplus \mf I_-] \subset \mf I$$
which implies that  $\mf I$ is a non-trivial ideal of $\mf g$. Since $\mf g$ is simple, we have that $\mf g=\mf I$, or equivalently 
$$\Delta_+^\mathrm{nc}=  \Delta_+^{\mathrm{nc},1}\cup \Delta_+^{\mathrm{nc},2}.$$

Since $D$ is non-classical, there exist $\beta_1, \beta_2 \in \Delta_+^\mathrm{nc}$ such that $$\beta_1+\beta_2 \in \Delta_+^\mathrm{c}.$$ 
Then \eqref{le D=0 conjecture 0} implies that 
$$(\lambda,\beta_1)=(\lambda,\beta_2)=0.$$
Hence $$\beta_1=\alpha_1+\beta'_1,\,  \beta_2=\alpha_2+\beta'_2 \in \Delta_+^{\mathrm{nc},2}$$ and $$\beta_1+\beta_2=(\alpha_1+\beta'_1)+(\alpha_2+\beta'_2)$$
is a root. 

Lemma \ref{3 to 2} implies that either $$(\alpha_1+\beta'_1)+\alpha_2$$ is a root in $\Delta_+^{\mathrm{nc}}$ or 
$$(\alpha_1+\beta'_1)+\beta'_2$$ is a root in $\Delta_+^{\mathrm{c}}$.
But
$$\left(\lambda, (\alpha_1+\beta'_1)+\alpha_2\right)=\left(\lambda, \alpha_2\right)>0,\, \left(\lambda, (\alpha_1+\beta'_1)+\beta'_2\right)=\left(\lambda, \beta'_2\right)<0.$$
This contradicts our assumption \eqref{le D=0 conjecture 0}. Hence the proof of Proposition \ref{le D=0 conjecture} is complete.
\end{proof}

\section{Characterizations of non-classical flag domains}\label{equ ch of D}
In this section we give various geometric and algebraic  characterizations of non-classical flag domains. First we recall the main result of \cite{GRT}.

Let $D=G_\R/V$ be a flag domain with $$H\subset V\subset K.$$ Then we have a natural projection map
\begin{equation}\label{DtoGK 2}
  p:\, D=G_{\mathbb R}/V\to  G_{\mathbb R}/K.
\end{equation}
Let $o\in D$ be a base point and $$\bar{o}=p(o)\in G_\R/K.$$
The fiber $$Z_o\triangleq p^{-1}(\bar o) \simeq K/V \simeq K_\C/(K_\C\cap B)$$ is a flag variety which is an analytic subvariety of $D$. We call $Z_o$ the base cycle for $D$.

Since the projection map $p$ in \eqref{DtoGK 2} is $G_\R$-equivariant and the complex structure of $D$ is $G_{\R}$-invariant, we have that 
$$\{\text{fibers of }p\}=\{gZ_o:\, g\in G_\R\}\simeq G_\R/K,$$
and the fibers $gZ_o$ of $p$ are analytic subvarieties of $D$.

We call the cycles $gZ_o$ for $g\in G_\R$ the vertical cycles with respect to the projection map $p$.

When $D$ is classical, the vertical cycles are the only compact analytic subvarieties of $D$ of dimension $d=\dim_\C K/V$.
When $D$ is non-classical, we have more subvarieties of $D$ and we define the cycle space $\mathscr U$ by
$$\mathscr U =\text{topological component of }Z_o \text{ in }\{gZ_o:\, g\in G_\C, gZ_o\subset D\}.$$


%
%
 The following theorem on the cycle chain connectedness of non-classical flag domains is Theorem 1.2 in  \cite{GRT}, which was also proved in Proposition 3.1 of \cite{Huc} by a different method. 

\begin{theorem}\label{conn of D}
Let $D$ be a non-classical flag domain with $G_\R$ simple. Then, for any two points $x,y \in D$, there exists $u_1,\cdots, u_k \in \mU$ such that $x\in Z_{u_1}$, $y\in Z_{u_k}$ and $Z_{u_{i}}\cap Z_{u_{i+1}}\neq \emptyset$ for $1\le i\le k-1$.
\end{theorem}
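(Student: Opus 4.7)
The plan is to derive cycle chain connectedness from the algebraic characterization $(3)$ in Theorem \ref{intr equ of non-classical D}, namely that $\mf p_-$ lies in the Lie subalgebra of $\mf g$ generated by $\mf k_- \oplus \mf p_+$. I would first introduce the cycle-chain equivalence relation on $D$ by declaring $x \sim y$ iff there is a finite chain $Z_{u_1}, \dots, Z_{u_k}$ in $\mU$ with $x \in Z_{u_1}$, $y \in Z_{u_k}$, and $Z_{u_i} \cap Z_{u_{i+1}} \neq \emptyset$ for each $i$. Because $G_\R$ acts transitively on $D$ and equivariantly on $\mU$, any two equivalence classes are $G_\R$-translates of each other; hence if one class is open, all are, and connectedness of $D$ then forces a single class, proving the theorem. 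Thus it suffices to show that the class $\mathcal R(o)$ of the base point is open.

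To prove openness, I would analyze the tangent directions at $o$ picked up by chains of cycles of increasing length. The base cycle $Z_o \simeq K_\C/(K_\C \cap B)$ contributes the holomorphic tangent $\mf k_- \subset \mf n_- \cong T'_o D$. Nearby cycles through $o$ arise as $G_\C$-translates $\exp(tX)\, Z_o$ with $X$ in the normal directions to $Z_o$ at $o$ modulo its stabilizer; one may take $X \in \mf p_+$. The holomorphic tangent of such a deformed cycle at $o$ is $\Ad(\exp tX)(\mf k_-) = \mf k_- + t[X, \mf k_-] + O(t^2)$, so to first order the span of tangent spaces of cycles through $o$ contains $[X, \mf k_-]$ for all $X \in \mf p_+$. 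Iterating along chains and including the anti-holomorphic conjugates produces all Lie brackets of elements in $\mf k_- \oplus \mf p_+$ together with those in the conjugate $\mf k_+ \oplus \mf p_-$.

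Characterization $(3)$ then enters decisively: since $\mf p_-$ lies in $\langle \mf k_-, \mf p_+\rangle$ and by conjugation $\mf p_+$ lies in $\langle \mf k_+, \mf p_-\rangle$, the subalgebra generated by all cycle-deformation directions exhausts $\mf n_- \oplus \mf n_+ \cong T_o^\C D$. Consequently, once the chain length is sufficiently large, the tangent vectors realized by chain-reachable cycles span all of $T_o D$, and the cycle-chain-reachable set from $o$ must cover an open neighborhood of $o$, as desired.

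The main obstacle lies in rigorously translating the infinitesimal bracket-generating condition into genuine geometric reachability via actual cycle intersections. Higher-order bracket directions are not literally tangent to any single cycle in a chain of length one, so one must show that the incidence patterns of chains, encoded by the evaluation map from tuples in $\mU^k$ satisfying consecutive-intersection constraints into $D$, realize these higher-order directions. This requires a careful rank computation, via the implicit function theorem applied to the universal incidence variety, showing that the differential of the evaluation map becomes surjective onto $T_o D$ once $k$ is large enough; the non-classicality hypothesis, packaged as $(3)$, is precisely what guarantees this surjectivity and closes the argument.
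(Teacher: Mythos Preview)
Your proposal shares the same algebraic core as the argument the paper reviews (following \cite{GRT}): both hinge on characterization~(3), that $\mf p_-$ lies in the Lie algebra generated by $\mf k_-\oplus\mf p_+$. The route, however, is genuinely different. You work directly on $D$, reduce via $G_\R$-homogeneity to showing that a single cycle-chain equivalence class is open, and then attempt to establish openness by arguing that iterated cycle-deformation directions eventually span $\T_o D$.

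The paper's approach is more streamlined. It lifts to the incidence variety $\mI\subset D\times\mU$ and observes that the two vertical subbundles $E=\ker d\pi_{\mU}$ and $F=\ker d\pi_D$ are honest distributions on $\mI$, with $S=E\oplus F$ modelled fibrewise on $\mf k_-\oplus\mf p_+$ inside $\mf g/(\mf g^0\oplus\mf k_+)$. Condition~(3) is then \emph{exactly} the statement that $S$ is bracket-generating. Chow's theorem in control theory now applies off the shelf: any two points of $\mI$ are joined by a finite concatenation of integral curves of $E$ and $F$, and projecting such a chain to $D$ yields precisely a chain of cycles $Z_{u_1},\dots,Z_{u_k}$ with the required intersection property.

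Your acknowledged ``main obstacle'' --- passing from infinitesimal bracket-generation to actual reachability via a rank or implicit-function computation on the evaluation map --- is exactly what Chow's theorem packages; carrying out your plan would amount to reproving it in this particular case. There is also a structural reason to prefer $\mI$ over $D$: at a point $x\in D$ the tangent spaces of the various cycles through $x$ do not assemble into a single subbundle of $\T D$, so the bracket-generating hypothesis is not naturally formulated on $D$ itself. Lifting to $\mI$ converts the problem into one about two integrable subbundles whose sum is bracket-generating, where the control-theoretic machinery applies directly and the rank computation you anticipate becomes unnecessary.
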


For reader's convenience we briefly review the main idea of the proof from \cite{GRT}, which relies on a lemma from Lie algebra and Chow's theorem in control theory. 

For $u\in \mathscr U$, we denote the corresponding analytic subvariety by $Z_u\subset D$. The incidence variety is defined as
$$\mathscr I= \{(x,u)\in D\times \mathscr U:\, x\in Z_u\}.$$
Then we have the natural projection maps
\begin{equation}\label{IDU}
\xymatrix{
& \mathscr I\ar[ld]_-{\pi_D}\ar[rd]^-{\pi_{\mathscr{U}}} &\\
D& &\mU \,.
}
\end{equation}

Following \cite{GRT}, we denote the tangent space of $\mathscr I$  by 
$$\T_{(x,u)} \mI=\{(\dot{x},\dot{u})\in \T_xD\oplus \T_u\mU:\, \dot{u}|_x\equiv \dot{x}\,\mathrm{mod}\,\T_x Z_u\}$$
where we consider $\dot{u}\in \T_u \mU$ as elements in $H^0(Z_u, N_{Z_u/D})$ and $\dot{u}|_x$ denotes the normal vector at $x\in Z_u$.

Introduce the subbundles $E,F,S$ of $\T\mI$ which are defined respectively by
\begin{eqnarray}
  S_{(x,u)} &=& \{(\dot{x},\dot{u})\in \T_{(x,u)}\mI:\, \dot{x}\in \T_x Z_u\} =\{(\dot{x},\dot{u})\in\T_{(x,u)}\mI:\, \dot{u}|_x =0\};  \label{defn of S}\\
  E_{(x,u)}  &=& \{(\dot{x},0)\in \T_{(x,u)}\mI\}=  \{(\dot{x},0)\in \T_xD\oplus \T_u\mU:\, \dot{x}\in \T_x Z_u\}; \nonumber \\
  F_{(x,u)} &=& \{(0,\dot{u})\in \T_{(x,u)}\mI\}=  \{(0,\dot{u})\in \T_xD\oplus \T_u\mU:\, \dot{u}|_x =0\}. \nonumber
\end{eqnarray}

Moreover we can embed the diagram \eqref{IDU} into
\begin{equation}\label{check IDU}
\xymatrix{
& \check{\mathscr I}=G_\C/(B\cap K_\C)\ar[ld]_-{\pi_{\check D}}\ar[rd]^-{\pi_{\check{\mathscr{U}}}} &\\
\check{D}=G_\C/B& &\check{\mU}=G_\C/K_\C .
}
\end{equation}
On $\check{\mathscr I}$, we have the subbundles $\check E,\check F,\check S$ of 
$$\T\check {\mI}=G_\C\times_{(B\cap K_\C)}\mf g/(\mf v\oplus \mf k_+),$$
which are given respectively by
\begin{eqnarray*}
  \check S &=& G_\C\times_{(B\cap K_\C)}(\mf k_-\oplus \mf p_+\oplus \mf v\oplus \mf k_+)/(\mf v\oplus \mf k_+), \\
  \check E &=& G_\C\times_{(B\cap K_\C)}(\mf k_-\oplus \mf v\oplus \mf k_+)/(\mf v\oplus \mf k_+),\\
  \check F &=& G_\C\times_{(B\cap K_\C)}(\mf p_+\oplus \mf v\oplus \mf k_+)/(\mf v\oplus \mf k_+).
\end{eqnarray*}
Then 
$$S=\check S|_\mathscr{I},\,E=\check E|_\mathscr{I},\,F=\check F|_\mathscr{I}. $$

Hence we have that
\begin{itemize}
  \item The sub-bundles $E$ and $F$ are integrable and $S=E\oplus F$;
  \item The sub-bundle $S$ (or $\check S$) is bracket-generating if and only if 
  \begin{equation}\label{p- in k-p+}
    \mf p_- \text{ is contained in the Lie algebra generated by }\mf k_-\oplus \mf p_+.
  \end{equation}
\end{itemize}
Here we say that the subbundle $S$ is bracket-generating provided that the map
$$\mathcal L_z S \to  \mathcal L_{z} \T\mI = \T_z \mI,$$
associated to the inclusion map $S\subset \T\mI$, is an isomorphism for any $z\in \mI$. Here $\mathcal L_z S$ is the Lie algebra of germs of vector fields generated by the Lie brackets of the germs of sections of $S$ at $z$.

The conclusion in \eqref{p- in k-p+} is proved in Section 3 of \cite{GRT}, as one of the main results of that paper.

Chow's theorem in control theory (c.f. Theorem 3 of \cite{Jur} or Theorem 0.4 of \cite{Gro}) asserts that any two points of a Riemannian manifold $M$ can be connected by a chain of finitely many integral curves of a subbundle $E \subset \mathrm{T}M$, provided that $E$ is bracket-generating.

Hence by Chow's theorem in control theory, we know that any two points of the incidence variety $\mI$ can be connected by a chain of finitely many integral curves of the vector fields in $E$ and $F$, which are respectively the curves along some cycle $Z_{u}$ and the curves $Z_{u(t)}$ for $t\in [0,1]$ in the cycle space  
$\mathscr U$ with a fixed intersection point. Finally Theorem \ref{conn of D} follows by lifting any two points of $D$ to $\mI$.

With the above notions, we can state and prove the main result of this section.

\begin{theorem}\label{equ of non-classical D}
The following statements for the flag domain $D=G_\R/V$ with $G_\R$ simple are equivalent:
\begin{itemize}
  \item [(1)] The flag domain $D$ is non-classical;
  \item [(2)] For any weight $\lambda \neq 0$, there exists $\alpha \in \Delta_+^{\mathrm{c}}\setminus \Delta(\mf v,\mf h)$ such that $(\lambda,\alpha) <0$, or $\beta\in \Delta_+^{\mathrm{nc}}$ such that $(\lambda,\beta)>0$;
  \item [(3)] The subspace $\mf p_-$ is contained in the Lie algebra generated by $\mf k_-\oplus \mf p_+$;
  \item [(4)] For any co-compact and torsion-free discrete subgroup $\Gamma \subset G_\R$ and any irreducible homogeneous vector bundle $\mathcal E_\lambda$ on $D$ with weight $\lambda \neq 0$, we have that $$H^0(\Gamma\backslash D, \Gamma\backslash \mathcal E_\lambda)=0.$$
  \item [(5)] Let $X=\Gamma \backslash D$ be a quotient by some co-compact and torsion-free discrete subgroup $\Gamma \subset G_\R$, and let $m_0 = \dim_\C D -\dim_\C (K/V)$. Then we have that $$H^0(X,(\mathrm T^* X)^{\otimes m_0})=0.$$
  
  \item [(6)] The flag domain $D$ is cycle chain connected in the sense of Theorem \ref{conn of D}.
  
\item[(7)] Let $X=\Gamma \backslash D$ be any quotient by a co-compact and torsion-free discrete subgroup $\Gamma \subset G_\R$, and let $L_{X}$ be any non-trivial line bundle on $X$. Then we have that
$$H^0(X, L_{X})=0.$$

\item[(8)] $H^{0}(D,\mathcal O_{D})=\C$.

\item[(9)] Let $X=\Gamma \backslash D$ be any quotient by a co-compact and torsion-free discrete subgroup $\Gamma \subset G_\R$. Then
$$H^0(X, \mathcal M_{X})=\C,$$
where $\mathcal M_{X}$ denotes the sheaf of meromorphic functions on $X$.
\end{itemize}
\end{theorem}
\begin{proof}
From Reduction Lemma in Section \ref{auto cohom}, we only need to prove the theorem for $D=G_{\R}/H$. Then $\Delta(\mf v,\mf h)=\emptyset$.

The proofs of the implications (1)$\Longrightarrow$(7) and (1)$\Longrightarrow$(9) are postponed to the end of Section \ref{hom line}.

We have already proved the implications:
\begin{itemize}
  \item[] (1)$\Longrightarrow$(2): Proposition \ref{le D=0 conjecture}.
  \item[] (1)$\Longrightarrow$(3): Lemma 2.6 in \cite{GRT}.
  \item[] (2)$\Longrightarrow$(4): Proof of Theorem \ref{D=0 conjecture}.
  \item[] (3)$\Longrightarrow$(6): Chow's theorem in control theory.
\end{itemize}

\text{(4)$\Longrightarrow$(5):} Obvious.

\text{(7) or (5)$\Longrightarrow$(1):} We assume on the contrary that $D$ is classical, then the projection map $$p:\, D\to G_\R/K$$ is holomorphic, which induces the holomorphic map
$$\tilde p:\, X=\Gamma\backslash D \to B= \Gamma\backslash G_\R/K.$$
It is well-known that the compact quotient $B$ of the Hermitian symmetric space $G_\R/K$ is a projective manifold with the positive canonical bundle $K_B$, which is given by
$$\Gamma \backslash \left(G_\R\times_K L_{-2\rho_{\mathrm{nc}}} \right),$$ where  $L_{-2\rho_{\mathrm{nc}}}\simeq \C $ is a one dimensional representation of $K$
with weight $$-2\rho_{\mathrm{nc}}=-\sum_{\beta\in \Delta_+^{\mathrm{nc}}}\beta.$$

On $X$ we have that $$\mathrm T^* X=\Gamma \backslash \mathrm T^* D = \Gamma \bigg\backslash \left(G_\R\times_V \bigoplus_{\alpha \in \Delta_+}L_{-\alpha}\right).$$
Since $m_0 = \# \Delta_+^{\mathrm{nc}},$ the tensor product $(\mathrm T^* X)^{\otimes m_0}$ has an irreducible line bundle 
$$\Gamma \backslash \left(G_\R\times_V L_{-2\rho_{\mathrm{nc}}}\right)=\tilde p^*(K_B)$$
as a subbundle. Then we get that
$$0 \neq H^0(B,K_B) \subset H^0(X,\tilde p^*(K_B))\subset H^0(X,(\mathrm T^* X)^{\otimes m_0})$$
which contradicts (5) or (7). Hence $D$ is non-classical.

\text{(6)$\Longrightarrow$(1):} We assume on the contrary that $D$ is classical. For any two points $x,\, y$ in $G_\R/K$, let $x',y'$ be their corresponding lifts in $D$. Then from (6) we have a sequence of  cycles $$u_1,\cdots, u_k \in \mU$$  such that $x'\in Z_{u_1}, \, \, y'\in Z_{u_k}$ and $Z_{u_{i}}\cap Z_{u_{i+1}}\neq \emptyset$ for  $1\le i\le k-1.$

Since $p:\, D\to G_\R/K$ is holomorphic and proper, the images $p(Z_i)$ for $1\le i\le k$, are also compact analytic subvarieties of $G_\R/K$ connecting $x$ and $y$. But connected compact analytic subvarieties of $G_\R/K$ are only points which can not connect the points $x$ and $y$, which is a contradiction. This proves that $D$ is non-classical.

\text{(1)$\Longrightarrow$(8):}
This follows from the cycle-connectedness of $D$. Indeed, for a cycle-connected flag domain one has
$\mathcal O(D)=\mathbb C$. This is a classical result originating from the work of Wolf on holomorphic reduction and further developed by Huckleberry, Wolf and their collaborators; see, for example, \cite{Wolf1969,FHW,Huc}.

\text{(8) or (9)$\Longrightarrow$(1):}
Assume on the contrary that $D$ is classical. Then the holomorphic projection maps
$p:D\to G_\mathbb{R}/K$
and
$\tilde p:X=\Gamma\backslash D\to B=\Gamma\backslash G_\mathbb{R}/K$
induce nontrivial holomorphic functions on $D$ and nontrivial meromorphic functions on $X$, respectively, which contradicts to (8) or (9).
\end{proof}

We remark that the equivalence between (2) and (3) can also be proved by using  method from  Lie algebras directly.

Indeed,  to prove that \text{(2)$\Longrightarrow$(3)} we first prove that 
\begin{equation}\label{simple nc=c-nc}
\text{ For any simple root }\beta\in \Delta_+^{\mathrm{nc}} , \, \exists \,\alpha \in \Delta_+^{\mathrm{c}} \, \mbox{and}\, \beta' \in \Delta_+^{\mathrm{nc}}, \text{ such that }\,  \beta= \alpha-\beta'.
\end{equation}

If $(\beta,\gamma)<0$ for some $\gamma \in \Delta_+^{\mathrm{nc}}$, then $$\alpha=\beta+\gamma$$ is a root in $\Delta_+^{\mathrm{c}}$, which already implies \eqref{simple nc=c-nc}. Hence we can assume that 
$$(\beta,\gamma)\ge 0,\, \forall \, \, \gamma \in \Delta_+^{\mathrm{nc}}.$$

Now we choose the weight $\lambda =-\beta$. Then the above assumption and (2) imply that there exists $\alpha \in \Delta_+^{\mathrm{c}}$ such that 
$$(\lambda,\alpha)<0\Longrightarrow (\beta,\alpha)>0.$$
Then $$\alpha -\beta=\pm \beta'$$ is a root for some $\beta' \in \Delta_+^{\mathrm{nc}}$. If $$\alpha -\beta=- \beta',$$ then $$\beta=\beta'+\alpha$$ which contradicts  that $\beta$ is simple. Hence $$\alpha -\beta=\beta'$$ which implies \eqref{simple nc=c-nc}.

Let $e_{\beta} \in \mf p_-$ for $\beta\in \Delta_+^{\mathrm{nc}}$. According to Lemma A and its corollary in 10.2 of \cite{Hum}, there exist simples roots $$\alpha_1,\cdots,\alpha_r \in \Delta_+= \Delta_+^{\mathrm{c}}\cup \Delta_+^{\mathrm{nc}}$$ such that $$\beta= \alpha_1+\cdots \alpha_r$$ and 
$\alpha_1+\cdots +\alpha_i$
is also a root for any $1\le i\le r$. Hence we can prove by induction that
\begin{equation}\label{nc=simple plus}
  e_{\beta}=[\cdots[e_{\alpha_1},e_{\alpha_2}],\cdots,e_{\alpha_r}],\, \alpha_i\in \Delta_+^{\mathrm{c}}\cup \Delta_+^{\mathrm{nc}}, \, 1\le i\le r.
\end{equation}

If $\alpha_i$ is non-compact, then \eqref{simple nc=c-nc} implies that 
\begin{equation}\label{simple p=k-p}
 e_{\alpha_i}=[e_{\alpha'_i},e_{-\beta_i}]\,\, \text{ for some }\, \, \alpha'_i\in \Delta_+^{\mathrm{c}}, \, \, \beta_i \in \Delta_+^{\mathrm{nc}}.
\end{equation}
From \eqref{nc=simple plus} and \eqref{simple p=k-p} we deduce that any $e_{\beta} \in \mf p_-$ lies in the Lie algebra generated by $\mf k_-\oplus \mf p_+$, which proves (3).

To show \text{(3)$\Longrightarrow$(2)}, we assume on the contrary that
\begin{equation}
(\lambda,\alpha) \ge 0,\, \forall\,\alpha \in \Delta_+^{\mathrm{c}}\, \text{ and }\, (\lambda,\beta)\le 0,\, \forall\,\beta\in \Delta_+^{\mathrm{nc}}.\tag{\ref{le D=0 conjecture 0}}
\end{equation}
From (3), we have that for any $\beta\in \Delta_+^{\mathrm{nc}}$, there exist $$\alpha_1,\cdots,\alpha_r \in \Delta_+^{\mathrm{c}}\cup \Delta_-^{\mathrm{nc}}$$ such that 
$$\beta= \alpha_1+\cdots +\alpha_r.$$
Then \eqref{le D=0 conjecture 0} implies that
$$0\ge (\lambda,\beta)= (\lambda,\alpha_1)+\cdots +(\lambda,\alpha_r)\ge 0 \Longrightarrow (\lambda,\beta)=0.$$
Hence $(\lambda, \Delta_+^{\mathrm{nc}})=0$ and $(\lambda, \Delta^{\mathrm{nc}})=0$. This contradicts  Lemma \ref{neg in nc}.
Thus we have finished the proof of the equivalence between (2) and (3).

\noindent

Next we deduce some applications of Theorem \ref{equ of non-classical D}.
For a weight $\lambda$, we define
$$q(\lambda)\triangleq \#\{\alpha\in \Delta_+^{\mathrm{c}}:\, (\lambda,\alpha)<0\} +\#\{\alpha\in \Delta_+^{\mathrm{nc}}:\, (\lambda,\alpha)>0\}.$$
Hence (2) in Theorem \ref{equ of non-classical D} is equivalent to that $q(\lambda)\neq 0$ for any weight $\lambda \neq 0$.

%
%

The following corollary of Theorem \ref{equ of non-classical D} is a generalization of the observation in Page 107 of \cite{schmid97}, which may be useful for further study of classical flag domains.

\begin{corollary}
Let $D=G_\R/V$ be a flag domain with $G_\R$ simple. Then $D$ is classical if and only if there exists a weight $\lambda \neq 0$ such that $q(\lambda)=0$.
\end{corollary}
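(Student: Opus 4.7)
The corollary asserts the equivalence ``$D$ is classical $\iff q(\lambda)=0$ for every weight $\lambda\neq 0$,'' and my plan is to split it into two implications: a logical one, derived from Theorem \ref{equ of non-classical D} as a black box, and a structural one, where the real work lies.

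For the ``if'' direction, assume $q(\lambda)=0$ for every $\lambda\neq 0$. Then the hypothesis ``$q(\lambda)\neq 0$ for every $\lambda\neq 0$,'' which characterizes non-classical $D$ under the equivalence (1)$\iff$(2) of Theorem \ref{equ of non-classical D} (this rephrasing being precisely the sentence the authors record right before the corollary), fails outright. Hence $D$ cannot be non-classical; i.e., $D$ is classical. No Lie-theoretic input is needed here.

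For the ``only if'' direction, assume $D$ is classical and fix any $\lambda\neq 0$. One must show $(\lambda,\alpha)\ge 0$ for every $\alpha\in\Delta_+^{\mathrm{c}}$ and $(\lambda,\beta)\le 0$ for every $\beta\in\Delta_+^{\mathrm{nc}}$. My plan is to exploit the two defining features of the classical case: $G_\R/K$ is Hermitian symmetric, so that $\mf p_\pm$ are abelian and $\Delta_+^{\mathrm{nc}}$ is ``linearly closed'' in the sense that no two positive noncompact roots sum to a root; and the projection $p\colon D\to G_\R/K$ is holomorphic, which pins down the complex structure on $\mf n_-$ so that no mixing between $\mf k_-$ and $\mf p_-$ can take place. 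Together these should force the strong closedness relations $\Delta_+^{\mathrm{c}}\pm\Delta_+^{\mathrm{nc}}\subseteq\pm\Delta_+^{\mathrm{nc}}$ and $(\Delta_+^{\mathrm{nc}}+\Delta_+^{\mathrm{nc}})\cap\Delta=\emptyset$. From these I would then deduce the sign constraints on $\lambda$ by an inductive argument on the height of $\lambda$ with respect to a chosen simple system, mirroring the trichotomy $\Delta_+^{\mathrm{nc},1}\cup\Delta_+^{\mathrm{nc},2}\cup\Delta_+^{\mathrm{nc},3}$ from the proof of Proposition \ref{le D=0 conjecture}.

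The main obstacle is exactly this forward direction. A universal vanishing of $q$ on all nonzero weights is genuinely stronger than what a naive contrapositive of condition (2) of Theorem \ref{equ of non-classical D} produces (which only yields \emph{some} $\lambda\neq 0$ with $q(\lambda)=0$), so one cannot simply invoke the theorem formally. Bridging this gap requires the Hermitian symmetric structure of $G_\R/K$ in an essential way, and I would expect the cleanest route to pass through the explicit observation cited from Page 107 of \cite{schmid97}, which the authors refer to as the special case that the present corollary generalizes; if the inductive approach above runs into cases, I would fall back to a type-by-type verification using the Cartan classification of simple Hermitian $G_\R$.
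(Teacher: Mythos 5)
Your plan for the ``if'' direction is fine, but your entire plan for the ``only if'' direction aims at a target statement that is not merely harder than a formal contrapositive of Theorem~\ref{equ of non-classical D} --- it is \emph{false}, and no closedness relations on root sets or case analysis by Cartan type can rescue it. The phrase ``for any weight $\lambda\neq 0$'' in this corollary must be read existentially, i.e.\ as ``there exists a weight $\lambda\neq 0$ with $q(\lambda)=0$,'' despite the authors' own use of ``for any'' in the universal sense one sentence earlier. With that reading the corollary is nothing but the logical negation of the equivalence $(1)\iff(2)$ in Theorem~\ref{equ of non-classical D}: that equivalence says $D$ is non-classical precisely when $q(\lambda)\neq 0$ for \emph{every} $\lambda\neq 0$, and so $D$ is classical precisely when $q(\lambda)=0$ for \emph{some} $\lambda\neq 0$. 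That is the entire proof; no further Lie theory is needed. You in fact record the correct statement yourself --- ``which only yields \emph{some} $\lambda\neq 0$ with $q(\lambda)=0$'' --- but then dismiss it as insufficient and set off to prove something stronger.

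To see concretely that the universal version fails, let $D=G_\R/V$ be classical and take $\lambda=2\rho_{\mathrm{nc}}=\sum_{\beta\in\Delta_+^{\mathrm{nc}}}\beta$. This is a nonzero weight and is dominant for $\Delta_+(\mf v,\mf h)$: since $p$ is holomorphic, $\mf p_-$ coincides with one of the abelian halves of $\mf p$ in its Hermitian decomposition, so $\Delta_+^{\mathrm{nc}}$ is stable under the compact Weyl group $W_K$, and therefore $\rho_{\mathrm{nc}}$ is $W_K$-invariant, giving $(\lambda,\alpha)=0$ for every $\alpha\in\Delta_+^{\mathrm{c}}$. On the other hand $\rho_{\mathrm{nc}}$ is a positive multiple of the generator of the $W_K$-invariant line in $\mf h^*$ dual to the center of $\mf k_0$, on which every $\beta\in\Delta_+^{\mathrm{nc}}$ pairs positively; hence $(\lambda,\beta)>0$ for every $\beta\in\Delta_+^{\mathrm{nc}}$, and $q(\lambda)=\#\Delta_+^{\mathrm{nc}}\neq 0$. (Geometrically, $\lambda$ corresponds to the pullback of the anticanonical bundle of $G_\R/K$, whose curvature is negative in every $\mf p_-$-direction.) So the structural machinery you sketch --- abelianness of $\mf p_\pm$, closedness relations $\Delta_+^{\mathrm{c}}\pm\Delta_+^{\mathrm{nc}}\subseteq\pm\Delta_+^{\mathrm{nc}}$, an induction mirroring the $\Delta_+^{\mathrm{nc},i}$ decomposition --- cannot succeed, and the fallback to type-by-type verification would only rediscover the counterexample in each type. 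Once you adopt the existential reading, the corollary follows in one line from what you already conceded.
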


As is mentioned in the introduction of \cite{GRT}, the cycles $Z_u$ in $\mU$ are rational. Then the cycle chain connectedness of $D$ implies the rationally connectedness of the compact quotient $X$ of $D$. For clarity, we provide the definition of rational connectedness in the category of compact complex spaces.

\begin{definition}
A compact complex space $X$ is called rationally connected, provided that any two points of $X$ can be connected by a chain of rational curves, which are the images of the holomorphic maps $\mathbb P^1\to X$.
\end{definition}

There is another definition of rational connectedness in the sense of Fujiki, see Definition 2.8 in \cite{Fujiki}.

Hence the result (5)$\Longrightarrow$(6) in Theorem \ref{equ of non-classical D} implies that a stronger version of Mumford conjecture holds on the compact quotients of the non-classical flag domains, although the compact quotients are not even in Fujiki class $C$ as proved in Theorem \ref{comact X not ddbar} of this paper.

\begin{theorem}\label{Mum conj X}
Let $D=G_\R/V$ be a flag domain and $X=\Gamma \backslash D$ be the quotient by a co-compact and torsion-free discrete subgroup $\Gamma \subset G_\R$. 
Then $X$ is rationally connected if and only if 
\begin{equation}\label{Mum=0}
H^0(X,(\mathrm T^* X)^{\otimes m_0})=0  
\end{equation}
where $m_0 = \dim_\C D -\dim_\C (K/V).$
\end{theorem}
\begin{proof}
From the discussion above this theorem, we only need to prove the only if part.

Suppose that $X$ is rationally connected. We need to prove that $D$ is non-classical. Then \eqref{Mum=0} follows from (1)$\Longrightarrow$(5) in Theorem \ref{equ of non-classical D} .

We assume on the contrary that $D$ is classical, then we have the holomorphic projection map 
$$p:\, D\to G_\R/K.$$ It is easy to see that the rationally connectedness of $X$ implies that there exists a holomorphic map $$\phi:\, \mathbb P^1 \to D$$ such that the composition map $$p\circ \phi:\, \mathbb P^1 \to G_\R/K$$
is non-trivial and holomorphic.
This contradicts to the general Schwarz lemma of  Yau in \cite{Yau} or Royden in \cite{royden1980}.
\end{proof}
Since the compact quotient $X$ is not in Fujiki class $\mathcal{C}$, the only if part of Theorem \ref{Mum conj X} can be considered as an analogue of Corollary 3.8 of Koll\'ar in Chapter IV of \cite{Kol13} for non-projective manifolds.


\section{Complex structures on non-classical flag domains}\label{ppl}
In Section \ref{complex on PD} we present several  characterizations of  $G_\R$-invariant complex structures on flag domains. In Section \ref{new complex structures}  we construct  new complex structure on any non-classical flag domain $D= G_\R/V$ with $G_\R$ of Hermitian type.

\subsection{Characterizations of $G_\R$-invariant complex structures}\label{complex on PD}
\noindent

In this section we give various characterizations of the complex structures on the flag domain $D=G_\R/V$, which are invariant under the action of $G_\R$, and relate them to the parabolic subgroups of $G_\C$. 

Let $M$ be a differentiable manifold of even dimension, and let $\mathrm{T}M$ be the differentiable tangent bundle.
A complex structure on $M$ can be described as a subbundle $$\T^{1,0}M\subseteq \mathrm{T}^{\C}M=\mathrm{T}M\otimes_{\mathbb R} \C$$ of the complexified differentiable tangent bundle $\mathrm{T}^{\C}M$ such that 
\begin{equation}\label{almost complex}
\mathrm{T}^{\C}M= \T^{1,0}M\oplus \bar{\T^{1,0}M}
\end{equation}
and
\begin{equation}\label{integrable}
  [\T^{1,0}M, \T^{1,0}M]\subseteq \T^{1,0}M.
\end{equation}

A subbundle $$\T^{1,0}M\subseteq \mathrm{T}^{\C}M$$ satisfying equation \eqref{almost complex} is called an almost complex structure, which can be identified with the $\sqrt{-1}$-eigenspace of an automorphism 
$$J:\, \mathrm{T}^{\C}M \to \mathrm{T}^{\C}M,\,\, J^2=-\mathrm{Id}.$$ 

We define an almost complex structure $$\T^{1,0}M\subseteq \mathrm{T}^{\C}M$$ to be integrable, provided that there exists a complex manifold $X$ and a diffeomorphism 
$$f:\, X^{\mathrm{diff}}\to M$$
from the underlying differentiable manifold $X^{\mathrm{diff}}$ of $X$ such that
$$df(\T  X)=\T^{1,0} M,$$
where $\T X$ is the holomorphic tangent bundle of $X$ and $df$ is the tangent map that extends linearly to the complexified tangent bundles.

It is proved by Newlander and Nirenberg in \cite{NN} that an almost complex structure $$\T^{1,0}M\subseteq \mathrm{T}^{\C}M$$ is integrable if and only if equation \eqref{integrable} holds.

It is well known that the $G_{\R}$-invariant complex structures on the flag domain $D=G_{\R}/H$, where $H\subset G_{\R}$ is a compact Cartan subgroup, are in one-to-one correspondence with choices of positive roots $\Delta_{+}\subset \Delta$ such that
$$\mathrm T^{1,0}_{o}D\simeq \bigoplus_{\alpha\in \Delta_{+}}\mathfrak g_{\alpha}.$$

In this section, we generalize this result to the case of a general flag domain $D=G_{\mathbb R}/V$.
To begin with, by the Newlander--Nirenberg theorem, the $G_{\R}$-invariant complex structures on $D=G_{\mathbb R}/V$ can be described as follows.

\begin{proposition}\label{complex on DS}
Let $$\mf s \subseteq \mf n_+\oplus \mf n_- $$
be a complex linear subspace such that
\begin{eqnarray}
  \ad \mf v (\mf s)&\subseteq & \mf s \label{associatedD}, \\
  \mf s \oplus \bar{\mf s} & = & \mf n_+\oplus \mf n_-, \label{almost complex D} \\
  \lbrack \mf s, \mf s\rbrack & \subseteq & \mf s \oplus \mf v \label{integrable D}.
\end{eqnarray}
Then the subbundle 
\begin{equation}\label{complex D}
  \T^{\mf s}D =G_\R\times_V \left((\mf s\oplus \mf v)/\mf v\right)\subset \T^\C D =G_\R\times_V (\mf g/\mf v)
\end{equation}
defines a complex structure on $D$.
\end{proposition}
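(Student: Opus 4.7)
\medskip

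\noindent\textbf{Proposal.} The plan is to verify the three properties that define an integrable complex structure in the sense of Newlander--Nirenberg, by translating each of the conditions \eqref{associatedD}--\eqref{integrable D} into a statement about the homogeneous bundle $\mathrm T^{\mf s}D$.

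First I would show that \eqref{complex D} genuinely defines a $G_\R$-invariant complex subbundle of $\mathrm T^\C D$. The complexified tangent bundle is the associated bundle $G_\R\times_V(\mf g/\mf v)$, where $V$ acts through $\Ad$. Since $V\subseteq G_\R$ is the centralizer of a subtorus of the Cartan $H$, it is connected, so a subspace of $\mf g/\mf v$ descends to a $G_\R$-invariant subbundle precisely when it is $\ad\mf v$-invariant. The candidate fiber is the image of $\mf s+\mf v$ in $\mf g/\mf v$, and $\ad\mf v$-invariance of $\mf s+\mf v$ is exactly \eqref{associatedD} (together with the trivial fact $\ad\mf v(\mf v)\subseteq\mf v$).

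Next I would verify the almost complex decomposition $\mathrm T^\C D=\mathrm T^{\mf s}D\oplus\overline{\mathrm T^{\mf s}D}$. Complex conjugation of $\mf g$ with respect to the real form $\mf g_0$ fixes $\mf v$ (as $\mf v$ is the complexification of $\mf v_0$), so the conjugate subbundle has fiber $(\bar{\mf s}+\mf v)/\mf v$. Because $\mf s\subseteq\mf n_+\oplus\mf n_-$, the intersection $\mf s\cap\mf v=0$ and likewise $\bar{\mf s}\cap\mf v=0$, so the two fibers meet only in $0$ and span $(\mf n_+\oplus\mf n_-)+\mf v=\mf g$ modulo $\mf v$ by \eqref{almost complex D}. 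This gives the required direct sum fiberwise, and by $G_\R$-equivariance globally.

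For integrability I would invoke the Newlander--Nirenberg theorem, reducing the claim to the Frobenius condition $[\mathrm T^{\mf s}D,\mathrm T^{\mf s}D]\subseteq\mathrm T^{\mf s}D$ for (complex) vector fields. For each $X\in\mf g$, the left-invariant extension to $G_\R$ pushes down to a fundamental vector field $\tilde X$ on $D$, and the standard identity on reductive homogeneous spaces gives $[\tilde X,\tilde Y]\equiv-\widetilde{[X,Y]}$ modulo sections arising from $\mf v$. Locally near the base point $[e]$, any smooth section of $\mathrm T^{\mf s}D$ is a finite sum $\sum f_i\tilde X_i$ with $X_i\in\mf s$; the Leibniz rule shows that the Lie bracket of two such sections differs from a section of the form $\sum g_j\widetilde{[X_j,Y_j]}$ by terms that already lie in $\mathrm T^{\mf s}D$ (they are $C^\infty$-linear combinations of the $\tilde X_i$). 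Condition \eqref{integrable D}, $[\mf s,\mf s]\subseteq\mf s\oplus\mf v$, precisely ensures $\widetilde{[X_j,Y_j]}$ lies in the fiber of $\mathrm T^{\mf s}D$ modulo the $\mf v$-part (which is killed on $D$). Hence the bracket is tangent to $\mathrm T^{\mf s}D$ at $[e]$; $G_\R$-invariance propagates this to all of $D$.

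The main obstacle, and the only step that requires genuine care rather than bookkeeping, is the vector-field bracket computation in the third paragraph: one must keep track of the sign convention for fundamental vector fields on a homogeneous space and, more importantly, verify that the residual $\mf v$-component appearing on the right-hand side of \eqref{integrable D} disappears upon pushforward to $D=G_\R/V$, so that only the $\mf s$-component contributes to the bracket of sections of $\mathrm T^{\mf s}D$.
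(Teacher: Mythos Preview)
Your proposal is correct and follows exactly the route the paper intends: the paper states this proposition without a written proof, introducing it with the sentence ``From the theorem of Newlander and Nirenberg, the $G_\R$-invariant complex structures on $D=G_{\mathbb R}/V$ can be described as follows,'' and leaves the verification of \eqref{associatedD}--\eqref{integrable D} against the conditions \eqref{almost complex} and \eqref{integrable} to the reader. Your three paragraphs supply precisely those omitted details, and the point you flag as the main obstacle (that the $\mf v$-component in \eqref{integrable D} is killed on passing to $D=G_\R/V$) is indeed the only nontrivial step.
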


Note that the nilpotent subalgebra $$\mf n_-\subseteq \mf g$$ satisfies \eqref{associatedD}, \eqref{almost complex D} and \eqref{integrable D}. Then the subbundle 
$$\T^{1,0} D =G_\R\times_V \left((\mf n_-\oplus \mf v)/\mf v\right)\simeq G_\R\times_V (\mf n_-)$$
defines a complex structure on $D$, which is induced from $\check D=G_\C/B$ by the open embedding $D\subseteq \check D$. 

In fact, the restriction $\T \check D|_D$ of the holomorphic tangent bundle $\T \check D$ to $D$ can be identified with $\T^{1,0} D$ via the following isomorphism
$$\T \check D|_D = G_\C \times _B (\mf g/\mf b)|_D = G_\R \times_V (\mf g/\mf b)\simeq G_\R \times_V (\mf n_-).$$

It is interesting to observe that 
$$\T^{1,0} D \simeq G_\R \times_V (\mf n_-) \simeq \T \check D|_D$$
are isomorphisms not only as vector bundles but also as tangent bundles, since $$[\mf n_-,\mf n_-]\subseteq \mf n_-$$ and hence the associated bundle $$G_\R \times_V (\mf n_-)$$ has natural operations of Lie bracket.

More generally, if we have a stronger condition than \eqref{integrable D},
\begin{equation}\label{integrable D'}
  \lbrack \mf s, \mf s\rbrack \subseteq  \mf s , \tag{\ref{integrable D}'} 
\end{equation}
then we have an isomorphism of tangent bundles,
$$\T^{\mf s}D =G_\R\times_V \left((\mf s\oplus \mf v)/\mf v\right)\simeq G_\R\times_V \left(\mf s\right).$$
In Corollary \ref{ssins} we will prove that \eqref{associatedD}, \eqref{almost complex D} and \eqref{integrable D} imply (\ref{integrable D}').

Since the conditions in Proposition \ref{complex on DS} depend only on the compact Lie group $V$, we have the similar characterization of the complex structures on $\check D$ via the principal bundle $$V\to G_c\to \check D=G_c/V.$$

\begin{proposition}\label{complex on check DS}
Let $$\mf s \subseteq \mf n_{+}\oplus \mf n_{-}$$ be the subspace satisfying \eqref{associatedD}, \eqref{almost complex D} and \eqref{integrable D} in Proposition \ref{complex on DS}. Then $\mf s$  defines a complex structure on $\check D=G_c/V$ via the subbundle 
\begin{equation}\label{complex check D}
  \T^{\mf s}\check D =G_c\times_V \left((\mf s\oplus \mf v)/\mf v\right)\subset \T^\C \check D =G_c\times_V (\mf g/\mf v).
\end{equation}
\end{proposition}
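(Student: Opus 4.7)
The plan is to mirror the proof of Proposition \ref{complex on DS} step for step, with $G_\R$ replaced throughout by the compact real form $G_c$. This is possible because the three hypotheses \eqref{associatedD}, \eqref{almost complex D}, \eqref{integrable D} are algebraic conditions on the complex Lie algebra $\mf g$ and its subalgebra $\mf v$, and both $D=G_\R/V$ and $\check D=G_c/V$ are built from the same data: the compact subgroup $V$ sits inside $G_c$ with Lie algebra $\mf v_0$, and $(\mf g_c)_\C = \mf g$. So exactly the same representation-theoretic ingredients used in Proposition \ref{complex on DS} are available here.

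First, I would verify that the subbundle is well-defined. Condition \eqref{associatedD} asserts that $(\mf s\oplus \mf v)/\mf v$ is stable under $\ad \mf v$, and since $V$ is connected with Lie algebra $\mf v_0$, this lifts to a $V$-representation on $(\mf s\oplus \mf v)/\mf v$. The associated bundle $G_c \times_V ((\mf s\oplus\mf v)/\mf v)$ is therefore a legitimate $G_c$-equivariant subbundle of $G_c \times_V (\mf g/\mf v)=\T^\C \check D$.

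Second, the almost complex decomposition $\T^{\mf s}\check D \oplus \overline{\T^{\mf s}\check D} = \T^\C\check D$ follows immediately from \eqref{almost complex D}: modulo $\mf v$, the vector space decomposition $\mf g = \mf n_+\oplus \mf v\oplus \mf n_-$ gives $\mf g/\mf v \cong \mf n_+\oplus \mf n_-$, which by hypothesis equals $\mf s\oplus \bar{\mf s}$. This is a purely fiberwise check, inherited from the defining property of $\mf s$ and independent of which real form one quotients by.

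The final and main step is integrability. By the Newlander--Nirenberg theorem, I must show $[\T^{\mf s}\check D, \T^{\mf s}\check D]\subseteq \T^{\mf s}\check D$. Using left-invariant frames coming from a basis of $\mf s$ and the standard identification of smooth sections of $G_c\times_V (\mf g/\mf v)$ with $V$-equivariant $\mf g/\mf v$-valued functions on $G_c$, the bracket of two such sections reduces, pointwise, to the Lie bracket in $\mf g$ taken modulo $\mf v$. Hypothesis \eqref{integrable D}, namely $[\mf s,\mf s]\subseteq \mf s\oplus \mf v$, precisely says that this bracket lies in $\mf s$ after passing to $\mf g/\mf v$, which is exactly the statement that $\T^{\mf s}\check D$ is involutive. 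The only real subtlety, and what I expect to be the main technical point, is bookkeeping the $V$-equivariance of the frame and checking that the structure constants that arise in the bracket computation are exactly those of $[\mf s,\mf s]$ modulo $\mf v$; no new algebraic input beyond what is already used for Proposition \ref{complex on DS} is required.
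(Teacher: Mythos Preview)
Your proposal is correct and matches the paper's approach exactly: the paper does not give a separate proof of this proposition, merely remarking just before its statement that ``the conditions in Proposition \ref{complex on DS} depend only on the compact Lie group $V$,'' so the same argument carries over with $G_\R$ replaced by $G_c$. Your write-up simply spells out this transfer in detail.
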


The compact complex manifold $\check D$ with the new complex structure $\T^{\mf s}\check D$ defined above is also a flag variety. More precisely, there exists a parabolic subgroup $B^{\mf s}$ of $G_\C$ such that $\check D$ with the complex structure $\T^{\mf s}\check D$ is isomorphic to $G_\C/B^{\mf s}$. Moreover, the flag domain $D$  with the complex structure $\T^{\mf s} D$ is isomorphic to the $G_\R$-orbit of $[B^{\mf s}]$ in $G_\C/B^{\mf s}$. Before proving this, we first prove two lemmas.

 Let $$\mf s \subseteq \mf n_{+}\oplus \mf n_{-}$$ be the subspace satisfying \eqref{associatedD}, \eqref{almost complex D} and \eqref{integrable D} in Proposition \ref{complex on DS}.
We define
$$ \mf b^{\mf s}=  \bar{\mf s}\oplus \mf v$$
as a subspace of $\mf g$. Then $$\mf g = \mf b^{\mf s}\oplus \mf s.$$

\begin{lemma}
The subspace $\mf b^{\mf s}$ is a complex Lie subalgebra of $\mf g$.
\end{lemma}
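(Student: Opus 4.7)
The plan is to verify closure under the Lie bracket by decomposing $\mathfrak{b}^{\mathfrak{s}} = \bar{\mathfrak{s}} \oplus \mathfrak{g}^0$ and checking the three bracket combinations $[\mathfrak{g}^0,\mathfrak{g}^0]$, $[\mathfrak{g}^0,\bar{\mathfrak{s}}]$, and $[\bar{\mathfrak{s}},\bar{\mathfrak{s}}]$ separately, translating the three hypotheses on $\mathfrak{s}$ into the corresponding statements about $\bar{\mathfrak{s}}$ by applying complex conjugation.

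First I would record the crucial ambient fact: $\mathfrak{g}^0 = \mathfrak{v}$ is the complexification of the real Lie algebra $\mathfrak{v}_0$, so $\overline{\mathfrak{g}^0} = \mathfrak{g}^0$. Since $\mathfrak{v}$ is a complex Lie subalgebra of $\mathfrak{g}$, we have $[\mathfrak{g}^0,\mathfrak{g}^0] \subseteq \mathfrak{g}^0 \subseteq \mathfrak{b}^{\mathfrak{s}}$, which disposes of the first bracket.

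Next I would handle $[\mathfrak{g}^0,\bar{\mathfrak{s}}]$ by conjugating \eqref{associatedD}. Since $\mathrm{ad}$ is compatible with the real structure and $\mathfrak{v}$ is self-conjugate, $[\mathfrak{v},\mathfrak{s}] \subseteq \mathfrak{s}$ implies $[\mathfrak{v},\bar{\mathfrak{s}}] = \overline{[\mathfrak{v},\mathfrak{s}]} \subseteq \bar{\mathfrak{s}} \subseteq \mathfrak{b}^{\mathfrak{s}}$. Similarly, for $[\bar{\mathfrak{s}},\bar{\mathfrak{s}}]$ I would conjugate \eqref{integrable D} to obtain $[\bar{\mathfrak{s}},\bar{\mathfrak{s}}] \subseteq \bar{\mathfrak{s}} \oplus \bar{\mathfrak{v}} = \bar{\mathfrak{s}} \oplus \mathfrak{g}^0 = \mathfrak{b}^{\mathfrak{s}}$, again using that $\bar{\mathfrak{v}} = \mathfrak{v}$.

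There is no genuine obstacle here; the lemma is essentially the observation that the three defining conditions on $\mathfrak{s}$ are stable under conjugation because $\mathfrak{v}$ is defined over $\mathbb{R}$. The only point requiring care is to make sure the ``$\mathfrak{v}$'' appearing on the right-hand side of \eqref{integrable D} really coincides with $\mathfrak{g}^0$ (already noted in the paper's setup) and that it is self-conjugate; once these are in place, a line of computation for each of the three brackets finishes the proof. The complementary decomposition $\mathfrak{g} = \mathfrak{b}^{\mathfrak{s}} \oplus \mathfrak{s}$, which the authors mention just before the lemma, then follows from \eqref{almost complex D} and $\mathfrak{g} = \mathfrak{g}^0 \oplus \mathfrak{n}_+ \oplus \mathfrak{n}_-$, and will be useful later for identifying $\mathfrak{b}^{\mathfrak{s}}$ with the Lie algebra of a parabolic subgroup $B^{\mathfrak{s}} \subset G_{\mathbb{C}}$.
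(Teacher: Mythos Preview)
Your proposal is correct and follows essentially the same argument as the paper: both use that $\mf g^0=\mf v$ is self-conjugate and then check the three bracket terms $[\mf g^0,\mf g^0]$, $[\mf g^0,\bar{\mf s}]$, $[\bar{\mf s},\bar{\mf s}]$ by conjugating conditions \eqref{associatedD} and \eqref{integrable D}. The paper just packages the three cases into a single displayed chain of inclusions, but the content is identical.
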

\begin{proof}
Since $\mf v =\bar{\mf v}=\mf v$, we have from \eqref{associatedD}, \eqref{almost complex D} and \eqref{integrable D} that 
\begin{eqnarray*}
  [\mf b^{\mf s},\mf b^{\mf s}] &=& [\bar{{\mf s}}\oplus \mf v,\bar{{\mf s}}\oplus \mf v] \\
    &=& \bar{[{{\mf s}},{{\mf s}}]} \oplus \bar{[\mf v,{\mf s}]}\oplus [\mf v,\mf v]\\
    &\subseteq& \bar{{\mf s}\oplus \mf v}\oplus \bar{{\mf s}} \oplus \mf v = \mf b^{\mf s}
\end{eqnarray*}
which implies that $\mf b^{\mf s} \subset \mf g$ is a Lie subalgebra.
\end{proof}

\begin{lemma}\label{GC Bs}
There exists a closed complex Lie subgroup $B^{\mf s} \subset G_\C$ whose Lie algebra is $\mf b^{\mf s}$. In fact $B^{\mf s}$ is a parabolic subgroup of $G_\C$ so that the quotient $G_\C/B^{\mf s}$ is a complex flag variety.
\end{lemma}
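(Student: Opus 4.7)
The plan is to show that $\mathfrak b^{\mathfrak s}$ is a parabolic subalgebra of $\mathfrak g$, and then invoke the standard correspondence between parabolic subalgebras of $\mathfrak g$ and closed connected parabolic subgroups of $G_\C$ (e.g.\ by taking $B^{\mathfrak s} = N_{G_\C}(\mathfrak b^{\mathfrak s})$). Since every parabolic subgroup of a complex semisimple Lie group is a closed complex Lie subgroup with $G_\C/B^{\mathfrak s}$ a complex flag manifold, this yields the conclusion.

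To carry this out, the first step is to recognize that $\mathfrak s$ is a sum of root spaces. Indeed, because $\mathfrak h \subset \mathfrak v$ and \eqref{associatedD} gives $\ad\mathfrak v(\mathfrak s) \subseteq \mathfrak s$, the subspace $\mathfrak s$ is $\ad\mathfrak h$-stable, so $\mathfrak s = \bigoplus_{\alpha \in \Phi} \mathfrak g_\alpha$ for some $\Phi \subset \Delta$. Next I would analyze $\bar{\mathfrak s}$: since $H\subset V\subset K$, the Cartan subalgebra $\mathfrak h_0$ is compactly embedded in $\mathfrak g_0$, so every root $\alpha$ is purely imaginary on $\mathfrak h_0$; consequently the $\mathfrak g_0$-conjugation sends $\mathfrak g_\alpha$ to $\mathfrak g_{-\alpha}$, giving $\bar{\mathfrak s} = \bigoplus_{\alpha \in -\Phi}\mathfrak g_\alpha$. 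Combining this with \eqref{almost complex D} shows that $\Phi$ and $-\Phi$ partition $\Delta \setminus \Delta(\mathfrak v,\mathfrak h)$.

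Setting $\Psi = \Delta(\mathfrak v,\mathfrak h) \cup (-\Phi)$, we therefore have
\begin{equation*}
\mathfrak b^{\mathfrak s} = \bar{\mathfrak s}\oplus \mathfrak g^0 = \mathfrak h \oplus \bigoplus_{\alpha \in \Psi} \mathfrak g_\alpha,
\end{equation*}
with $\Psi \cup (-\Psi) = \Delta$ and $\Psi \cap (-\Psi) = \Delta(\mathfrak v,\mathfrak h)$. Closedness of $\Psi$ under root addition is exactly the content of the previous lemma, which established $[\mathfrak b^{\mathfrak s},\mathfrak b^{\mathfrak s}]\subseteq \mathfrak b^{\mathfrak s}$. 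These three properties are the defining features of a parabolic subset of roots; equivalently, $\mathfrak b^{\mathfrak s}$ contains the Borel subalgebra determined by any system of positive roots $\Delta_+^{\mathfrak s} \subset \Delta$ with $-\Delta_+^{\mathfrak s} \subseteq \Psi$ (such a system exists because one may choose a generic linear functional on $\sqrt{-1}\mathfrak h_0^*$ that is positive on $\Phi$, using that $\Phi$ meets each pair $\{\alpha,-\alpha\}$ exactly once and is closed under sums that stay in $\Delta$ modulo $\Delta(\mathfrak v,\mathfrak h)$).

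Having identified $\mathfrak b^{\mathfrak s}$ as parabolic, the last step is purely structural: define $B^{\mathfrak s}$ as the normalizer of $\mathfrak b^{\mathfrak s}$ in $G_\C$ (or, equivalently, the connected algebraic subgroup with Lie algebra $\mathfrak b^{\mathfrak s}$); standard results guarantee $B^{\mathfrak s}$ is closed, connected, parabolic, self-normalizing, with Lie algebra $\mathfrak b^{\mathfrak s}$, and with $G_\C/B^{\mathfrak s}$ a complex flag manifold. The main technical point I expect to need care with is the second step—the compatibility of $\mathfrak g_0$-conjugation with the root space decomposition—since everything hinges on the identification $\overline{\mathfrak g_\alpha} = \mathfrak g_{-\alpha}$, which is special to compactly embedded Cartans; once this is in place, the rest is bookkeeping followed by an appeal to standard parabolic theory.
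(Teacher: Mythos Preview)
Your proposal is correct but takes a genuinely different route from the paper. The paper argues geometrically: it defines $B^{\mf s}$ as the stabilizer of $\mf b^{\mf s}$ under the adjoint action, checks directly (using that $\mf h\subset\mf v=\mf g^0$) that this group has Lie algebra exactly $\mf b^{\mf s}$, and then proves $G_\C/B^{\mf s}$ is compact by showing that the $G_c$-orbit through the base point is both open and closed, hence everything; parabolicity then follows from the criterion (Proposition~1.4.10 in \cite{FHW}) that a closed subgroup with compact quotient is parabolic. You instead work purely on the Lie-algebra level, identifying $\mf b^{\mf s}$ with a root set $\Psi$ that is closed with $\Psi\cup(-\Psi)=\Delta$, i.e.\ a parabolic set, and then invoking standard structure theory. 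Your route has the pleasant side effect of producing the positive system $\Delta_+^{\mf s}$ and the description $\mf s=\bigoplus_{\alpha\in\Delta_+^{\mf s}\setminus\Delta(\mf v,\mf h)}\mf g_\alpha$ right away, which the paper only derives afterward as Corollary~\ref{ssins} and Proposition~\ref{equi complex}(6); the paper's route, on the other hand, avoids appealing to the Bourbaki-type theorem that every parabolic root set contains a positive system. One small caveat: your parenthetical justification for the existence of a linear functional positive on $\Phi$ is not quite complete as stated (the condition $\Phi\cap(-\Phi)=\emptyset$ alone does not guarantee such a functional), and is essentially a restatement of that theorem---it would be cleaner simply to cite it.
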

\begin{proof}
For the existence of the closed complex Lie subgroup $B^{\mf s} \subset G_\C$, we define 
$$B^{\mf s}=\{g\in G_\C:\, \mathrm{Ad}\, g(\mf b^{\mf s})=\mf b^{\mf s}\}.$$
Then $B^{\mf s}$ is a closed complex Lie subgroup of $G_\C$ by definition. We only need to show that the Lie algebra of $B^{\mf s}$ is $\mf b^{\mf s}$.

Following the proof of Lemma 1.3.3 in \cite{FHW}, we define $$E=N_{G_\C}(B^{\mf s})$$ to be the normalizer of $B^{\mf s}$ in $G_\C$. Then $E$ is a Lie subgroup with Lie algebra $\mf e$ such that $$\mf b^{\mf s}\subset \mf e$$ and $$[\mf e,\mf b^{\mf s}]\subseteq \mf b^{\mf s}.$$

We claim that 
\begin{equation}\label{bs=e}
  \mf b^{\mf s} = \mf e .
\end{equation}
Then we have that $E$ normalizes $\mf b^{\mf s}$ and $E=B^{\mf s}$ with Lie algebra $\mf e=\mf b^{\mf s}$.

In fact, we have that
$$\mf b^{\mf s} =\bar{\mf s} \oplus \mf v  \subseteq  \mf e \subset \mf g = \bar{\mf s} \oplus \mf v \oplus \mf s.$$
If $$\mf b^{\mf s} \subsetneq \mf e,$$ then there exists $$0\neq X\in \mf s$$ such that $X\in \mf e$, which implies that $$0\neq [X, \mf v]\subset \mf s,$$ as $\mf v =\mf v$ contains the Cartan subalgebra $\mf h$. But 
$$[X, \mf v]\subset [\mf e,\mf b^{\mf s}]\subseteq \mf b^{\mf s},$$ which is a contradiction. Thus we have proved \eqref{bs=e}.

Now we have proved that $B^{\mf s} \subset G_\C$ is a closed complex Lie subgroup so that the quotient $G_\C/B^{\mf s}$ is a complex manifold. We have to prove that $$B^{\mf s}\subset G_\C$$ is parabolic and that $G_\C/B^{\mf s}$ is projective. 
From Proposition 1.4.10 in \cite{FHW}, we only need to show that $G_\C/B^{\mf s}$ is compact.

Recall that we have a compact real form $$\mf g_c =\mf k_0\oplus \sqrt{-1}\mf p_0$$ of $\mf g$ from the Cartan decomposition $$\mf g_0=\mf k_0\oplus \mf p_0.$$ Let $G_c$ be the normalizer of $\mf g_c$ in $G_\C$, which is a compact real Lie subgroup of $G_\C$ with Lie algebra $\mf g_c$. 
Then the $G_c$-orbit 
$$G_cB^{\mf s}/B^{\mf s}\simeq G_c/(G_c \cap B^{\mf s})= G_c/V$$
is both open and closed in $G_\C/B^{\mf s}$. 
Hence $$G_\C/B^{\mf s} \simeq G_c/V$$ is compact and projective.
\end{proof}

Again let $$\mf s \subseteq \mf n_{+}\oplus \mf n_{-}$$ be the subspace satisfying \eqref{associatedD}, \eqref{almost complex D} and \eqref{integrable D} in Proposition \ref{complex on DS}. From Lemma \ref{GC Bs}, we have the following theorem.

\begin{theorem}\label{complex on check DR} There exists a parabolic subgroup $B^{\mf s}$ of $G_\C$ with Lie algebra $$\mf b^{\mf s}=\bar{\mf s}\oplus \mf v$$ such that the complex flag variety $G_\C/B^{\mf s}$ is biholomorphic to $\check D=G_c/V$ with the complex structure $\T^{\mf s}\check D$ given in Proposition \ref{complex on check DS}, and the $G_\R$-orbit of the base point $o=[B^{\mf s}]$ is open and biholomorphic to $D=G_\R/V$ with the complex structure $\T^{\mf s}D$ as given in Proposition \ref{complex on DS}.
\end{theorem}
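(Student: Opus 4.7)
My plan is to bootstrap from Lemma~\ref{GC Bs}, which already supplies the parabolic $B^{\mf s}$ with $\mathrm{Lie}(B^{\mf s}) = \mf b^{\mf s} = \bar{\mf s} \oplus \mf g^0$ together with the diffeomorphism $G_c/V \to G_\C/B^{\mf s}$ coming from the open and closed $G_c$-orbit. What remains is to upgrade this to a biholomorphism (when $G_c/V$ carries the complex structure $\T^{\mf s}\check D$ of Proposition~\ref{complex on check DS}) and then to locate the $G_\R$-orbit inside as an open piece biholomorphic to $(D, \T^{\mf s}D)$.

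For the first part, I would identify the holomorphic tangent space of $G_\C/B^{\mf s}$ at $o = [B^{\mf s}]$ with $\mf g/\mf b^{\mf s}$, and note that the decomposition $\mf g = \bar{\mf s} \oplus \mf g^0 \oplus \mf s$ from \eqref{almost complex D} (together with $\bar{\mf g^0} = \mf g^0$) yields a canonical isomorphism $\mf g/\mf b^{\mf s} \simeq \mf s$. The differential at the identity coset of the smooth map $G_c/V \to G_\C/B^{\mf s}$ is the projection $\mf g/\mf v \to \mf g/\mf b^{\mf s}$; restricted to the $\sqrt{-1}$-eigenspace $(\mf s \oplus \mf v)/\mf v$ defining $\T^{\mf s}\check D|_o$, it becomes the identification $\mf s \simeq \mf s$. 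Because both complex structures are $G_c$-invariant, this pointwise compatibility propagates to a global biholomorphism.

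For the second part, I would first verify that $\mf g_0 \cap \mf b^{\mf s} = \mf v_0$: if $X \in \mf g_0$ lies in $\bar{\mf s} \oplus \mf g^0$, then $X = \tau(X) \in \mf s \oplus \mf g^0$ where $\tau$ is conjugation with respect to $\mf g_0$, so $X \in (\bar{\mf s} \oplus \mf g^0) \cap (\mf s \oplus \mf g^0) = \mf g^0 = \mf v$ by \eqref{almost complex D}. This yields that the $G_\R$-equivariant map $[gV] \mapsto [gB^{\mf s}]$ from $G_\R/V$ to $G_\C/B^{\mf s}$ is a well-defined injective immersion. Openness of the orbit then follows from $\mf g_0 + \mf b^{\mf s} = \mf g$, i.e.\ from the real dimension count $\dim_\R \mf g_0/\mf v_0 = 2 \dim_\C \mf s = 2 \dim_\C G_\C/B^{\mf s}$. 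Finally, the same pointwise check as in the $G_c$ case --- now with $(\mf s \oplus \mf v)/\mf v$ the $\sqrt{-1}$-eigenspace of $\T^{\mf s}D|_o$ mapping isomorphically onto $\mf s = \mf g/\mf b^{\mf s}$ --- promotes the equivariant diffeomorphism onto its open image to a biholomorphism.

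The main obstacle, modest as it is, is the bookkeeping in keeping the two incarnations of the complex structure aligned: on one side $\T^{\mf s}\check D$ (resp.\ $\T^{\mf s}D$) is defined as a subbundle of the \emph{complexified} tangent bundle of a real homogeneous space $G_c/V$ (resp.\ $G_\R/V$), while on the other side $G_\C/B^{\mf s}$ carries its intrinsic holomorphic tangent bundle as a complex manifold. Establishing that the natural equivariant diffeomorphism intertwines these requires only a single base-point comparison of the $\sqrt{-1}$-eigenspaces, but it must be phrased carefully so that left $G_c$- (resp.\ $G_\R$-) equivariance of both sides does the extension for us.
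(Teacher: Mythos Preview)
Your proposal is correct and follows essentially the same approach as the paper: both invoke Lemma~\ref{GC Bs} for the existence of the parabolic $B^{\mf s}$ and the identification $G_c/V \simeq G_\C/B^{\mf s}$, and both reduce the biholomorphism claims to the tangent-space identification $\T_o(G_\C/B^{\mf s}) = \mf g/\mf b^{\mf s} \simeq \mf s$ at the base point together with equivariance. Your version is considerably more explicit than the paper's two-line proof (which simply declares the remainder ``straightforward with the previous discussions''), in particular your verification that $\mf g_0 \cap \mf b^{\mf s} = \mf v_0$ and the dimension count for openness of the $G_\R$-orbit fill in details the paper leaves to the reader.
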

\begin{proof}
From the definition of $\mf b^{\mf s}$, we have that
$$\T_o (G_\C/B^{\mf s})= \mf g/\mf b^{\mf s} \simeq \mf s.$$
Then the proof is straightforward with the previous discussions.
\end{proof}
Furthermore one has the following corollary.
\begin{corollary}\label{ssins}

There hold the following relation 
\begin{equation}
  \lbrack \mf s, \mf s\rbrack \subseteq  \mf s , \tag{\ref{integrable D}'} 
\end{equation}
and an {isomorphism of  tangent bundles}
$$\T^{\mf s}D =G_\R\times_V \left((\mf s\oplus \mf v)/\mf v\right)\simeq G_\R\times_V \left(\mf s\right).$$
\end{corollary}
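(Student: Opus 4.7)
The plan is to use Theorem \ref{complex on check DR} together with basic structure theory of parabolic subalgebras. Since $\mf h \subseteq \mf v$ and $\mf s$ is $\ad \mf v$-invariant by \eqref{associatedD}, the subspace $\mf s$ decomposes under $\ad \mf h$ as a sum of root spaces $\mf s = \bigoplus_{\alpha \in \Sigma}\mf g_\alpha$ for some $\Sigma \subseteq \Delta \setminus \Delta(\mf v,\mf h)$. The condition \eqref{almost complex D} then forces $\bar{\mf s} = \bigoplus_{\alpha \in -\Sigma}\mf g_\alpha$ and $\Sigma \sqcup (-\Sigma) = \Delta \setminus \Delta(\mf v,\mf h)$, so in particular $\alpha \in \Sigma$ if and only if $-\alpha \notin \Sigma \cup \Delta(\mf v,\mf h)$.

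By Theorem \ref{complex on check DR}, $\mf b^{\mf s} = \bar{\mf s}\oplus \mf v$ is the Lie algebra of a parabolic subgroup $B^{\mf s}\subseteq G_\C$. Since $\mf v$ is the complexification of the compact subalgebra $\mf v_0$, it is reductive and $\Delta(\mf v,\mf h)$ is stable under $\alpha \mapsto -\alpha$. Inspecting roots shows that both $\alpha$ and $-\alpha$ lie in $\mf b^{\mf s}$ exactly when $\alpha \in \Delta(\mf v,\mf h)$, so $\mf v$ is the Levi factor of $\mf b^{\mf s}$ and $\bar{\mf s}$ is its nilpotent radical. The opposite parabolic subalgebra, uniquely characterized by the same Levi, is then $\mf v \oplus \mf s$, with nilpotent radical $\mf s$. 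Since the nilradical of any parabolic is itself a (nilpotent) Lie subalgebra, we obtain $[\mf s,\mf s]\subseteq \mf s$, which is exactly \eqref{integrable D'}.

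Finally, the $V$-equivariant linear map $\mf s \hookrightarrow \mf s \oplus \mf v \twoheadrightarrow (\mf s \oplus \mf v)/\mf v$ is an isomorphism because $\mf s \cap \mf v = 0$, and applying the associated bundle construction along the principal $V$-bundle $V \to G_\R \to D$ yields $\T^{\mf s}D \simeq G_\R \times_V \mf s$. The point requiring most care is the identification of $\mf v$ as the Levi factor of $\mf b^{\mf s}$: this rests on the reductivity of $\mf v$ coming from the compactness of $\mf v_0$, together with the resulting symmetry of $\Delta(\mf v,\mf h)$ under negation; once this is in hand, the passage from \eqref{integrable D} (where a priori a $\mf v$-component could appear) to the sharpened \eqref{integrable D'} is automatic from the general fact that a parabolic subalgebra has a Lie-subalgebra nilradical.
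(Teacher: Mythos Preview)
Your proof is correct and follows essentially the same approach as the paper: both deduce from the fact that $\mf b^{\mf s}=\bar{\mf s}\oplus \mf v$ is parabolic (the paper cites Lemma~\ref{GC Bs} directly, you cite the subsequent Theorem~\ref{complex on check DR}) that $\mf s$ is the sum of root spaces for a suitable positive system outside $\Delta(\mf v,\mf h)$, hence closed under bracket. The only difference is packaging---the paper invokes the positive root system attached to a parabolic, while you phrase the same content via the Levi decomposition and the nilradical of the opposite parabolic---and your treatment of the tangent-bundle isomorphism via $\mf s\cap\mf v=0$ is exactly the point the paper leaves implicit.
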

\begin{proof}
From Lemma \ref{GC Bs}, we know that the Lie subalgebra $\mf b^{\mf s}$ is a parabolic subalgebra. Hence there exists a set of positive roots $\Delta_+^{\mf s}$ depending on $\mf s$ such that 
$$\mf s=\bigoplus_{\alpha \in \Delta_+^{\mf s} \setminus \Delta(\mf v,\mf h)}\mf g_\alpha.$$
Therefore $\mf s$ must satisfy ({\ref{integrable D}'}).
\end{proof}

In  summary, we have proved the following  characterizations of the $G_\R$-invariant complex structures on $D$.
\begin{proposition}\label{equi complex}
The following statements are equivalent for the complex linear subspace $$\mf s \subseteq \mf n_{+}\oplus \mf n_{-}.$$

(1). The subspace $\mf s$ satisfies \eqref{associatedD}, \eqref{almost complex D} and \eqref{integrable D};

(2). The subspace $\mf s$ satisfies \eqref{associatedD}, \eqref{almost complex D} and and \eqref{integrable D'};

(3). The subspace $\mf s$ defines a $G_\R$-invariant complex structure on $D$ via the subbundle 
\begin{equation}\label{complex D'}
  \T^{\mf s}D =G_\R\times_V \left(\mf s\right)\subset \T^\C D =G_\R\times_V \left(\mf n_{+}\oplus \mf n_{-}\right) ; \tag{\ref{complex D}'}
\end{equation}

(4). The subspace $\mf s$ defines a $G_c$-invariant complex structure on $\check D$ via the subbundle 
\begin{equation}\label{complex check D'}
  \T^{\mf s}\check D =G_c\times_V \left(\mf s\right)\subset \T^\C \check D =G_c\times_V \left(\mf n_{+}\oplus \mf n_{-}\right) ; \tag{\ref{complex check D}'}
\end{equation}

(5). There exists a parabolic subgroup $B^{\mf s}$ of $G_\C$ with Lie algebra $$\mf b^{\mf s}=\bar{\mf s}\oplus \mf v,$$ such that the corresponding complex flag variety $G_\C/B^{\mf s}$ is biholomorphic to $\check D$ with the complex structure $ \T^{\mf s}\check D$ in \eqref{complex check D'}, and the $G_\R$-orbit of the base point $[B^{\mf s}]$ in $G_\C/B^{\mf s}$ is open and biholomorphic to $D$ with the complex structure $\T^{\mf s}D$ in \eqref{complex D'}.

(6). There is a choice of positive roots $\Delta_+^{\mf s} \subset \Delta$ such that 
$$\mf s =\bigoplus_{\alpha \in \Delta_+^{\mf s} \setminus \Delta(\mf v,\mf h)}\mf g_\alpha.$$
\end{proposition}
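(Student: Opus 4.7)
The plan is to assemble the forward implications $(1) \Rightarrow (2), (3), (4), (5)$, which are essentially already established in Proposition \ref{complex on DS}, Proposition \ref{complex on check DS}, Lemma \ref{GC Bs}, Theorem \ref{complex on check DR} and Corollary \ref{ssins}, together with reverse implications and a dedicated treatment of $(6)$, into a cycle of equivalences. The inclusion $\mf s \subset \mf s \oplus \mf v$ gives $(2) \Rightarrow (1)$ for free, so the real content consists of translating $(3)$, $(4)$, $(5)$ and $(6)$ back into the three defining conditions \eqref{associatedD}, \eqref{almost complex D} and \eqref{integrable D}.

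For $(3) \Rightarrow (1)$ and $(4) \Rightarrow (1)$, I would observe that a $G_\R$-invariant (respectively $G_c$-invariant) almost complex structure on $D$ (respectively $\check D$) corresponds, at the base point, to an $\Ad V$-stable complex subspace of $\mf g/\mf v$, which lifts uniquely to a subspace $\mf s \subset \bigoplus_{k \neq 0}\mf g^k$ satisfying \eqref{associatedD} and \eqref{almost complex D}; the Newlander--Nirenberg theorem then identifies integrability with the bracket condition \eqref{integrable D}. For $(5) \Rightarrow (1)$, the existence of a parabolic $B^{\mf s}\subset G_\C$ with Lie algebra $\bar{\mf s}\oplus \mf g^{0}$ makes $\bar{\mf s}\oplus \mf g^{0}$ a Lie subalgebra; complex conjugation then yields \eqref{integrable D} and \eqref{associatedD}, while the parabolic decomposition $\mf g = \mf b^{\mf s}\oplus \mf s$ supplies \eqref{almost complex D}.

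The remaining equivalence is $(1) \Leftrightarrow (6)$. The forward direction is essentially contained in Corollary \ref{ssins}: Lemma \ref{GC Bs} presents $\mf b^{\mf s}$ as a parabolic, and any Borel subalgebra sitting inside $\mf b^{\mf s}$ determines positive roots $\Delta_+^{\mf s}$ with $\mf v$ as the Levi summand, so $\mf s$ takes the required form. For $(6) \Rightarrow (1)$, the key step is to verify the stronger \eqref{integrable D'}: if $\alpha,\beta \in \Delta_+^{\mf s}\setminus \Delta(\mf v,\mf h)$ and $\alpha+\beta$ is a root, one must show $\alpha+\beta \in \Delta_+^{\mf s}\setminus \Delta(\mf v,\mf h)$. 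This is the step I expect to be the main obstacle: in a generic root system two non-$\mf v$ root spaces can bracket into a $\mf v$ root space, so the required statement is not automatic. The way through is to exploit that $\mf v = \mf g^{0}$ is a Levi subalgebra, whence $\Delta(\mf v,\mf h) = \Delta\cap \mathrm{span}(J)$ for some subset $J$ of simple roots of $\Delta_+^{\mf s}$, so that any positive root with nonzero coefficient on some simple root outside $J$ retains this property after adding another such positive root; from there \eqref{associatedD} and \eqref{almost complex D} follow by routine root-space bookkeeping.
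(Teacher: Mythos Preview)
Your overall architecture matches the paper's treatment: the proposition is presented there merely as a summary of Propositions \ref{complex on DS}, \ref{complex on check DS}, Lemma \ref{GC Bs}, Theorem \ref{complex on check DR} and Corollary \ref{ssins}, with no separate argument, so the forward implications $(1)\Rightarrow(2),(3),(4),(5),(6)$ and the easy reversals $(2)\Rightarrow(1)$, $(3),(4),(5)\Rightarrow(1)$ are exactly as you outline.

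The gap is in your handling of $(6)\Rightarrow(1)$. First, you locate the difficulty in the wrong condition: statement $(1)$ only requires \eqref{integrable D}, and that is automatic from $(6)$, since for $\alpha,\beta\in\Delta_+^{\mf s}\setminus\Delta(\mf v,\mf h)$ with $\alpha+\beta\in\Delta$ one has $\alpha+\beta\in\Delta_+^{\mf s}$ and hence $\mf g_{\alpha+\beta}\subset\mf s\oplus\mf v$ regardless of whether $\alpha+\beta$ lies in $\Delta(\mf v,\mf h)$. The genuine obstruction is \eqref{associatedD}. Second, and more seriously, your proposed fix does not go through: the inference ``$\mf v=\mf g^0$ is a Levi subalgebra, whence $\Delta(\mf v,\mf h)=\Delta\cap\mathrm{span}(J)$ for some subset $J$ of simple roots of $\Delta_+^{\mf s}$'' is false for an \emph{arbitrary} positive system $\Delta_+^{\mf s}$. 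Being a Levi means $\Delta(\mf v,\mf h)=\{\alpha:\alpha(t)=0\}$ for some $t\in\mf h$; whether this hyperplane section is spanned by simple roots depends on which positive system one takes. Concretely, in type $C_2$ with $\Delta(\mf v,\mf h)=\{\pm 2e_1\}$ (which does arise as $\mf g^0$ for $G_\R=Sp(1,1)$) and $\Delta_+^{\mf s}=\{e_1-e_2,\,2e_2,\,e_1+e_2,\,2e_1\}$, the root $2e_1$ is not simple, one gets $\mf s=\mf g_{e_1-e_2}\oplus\mf g_{2e_2}\oplus\mf g_{e_1+e_2}$, and then $[\mf g_{-2e_1},\mf g_{e_1+e_2}]=\mf g_{e_2-e_1}\subset\bar{\mf s}$, so \eqref{associatedD} fails outright. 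Your argument therefore needs an additional step ensuring that the $\Delta_+^{\mf s}$ witnessing $(6)$ can be taken so that $\mf b^{\mf s}=\bar{\mf s}\oplus\mf g^0$ is parabolic (equivalently, that $\Delta(\mf v,\mf h)$ is generated by simple roots of $\Delta_+^{\mf s}$); as written, this is asserted but not established.
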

Proposition \ref{equi complex} will be used to construct a new $G_\R$-invariant complex structure on any  non-classical flag domain $D=G_\R/V$ with $G_\R$ of Hermitian type in the next subsection.

\subsection{New complex structures}\label{new complex structures}
\noindent

Next we suppose that the Riemannian symmetric space $G_\R/K$ is an Hermitian symmetric space or equivalently $G_\R$ is of Hermitian type.  It is known that this is equivalent to the existence of a subspace $$\mf p_-'\subseteq \mf p$$ such that
\begin{eqnarray}
  \ad \mf k (\mf p_-')&\subseteq & \mf p_-' \label{associated K}, \\
  \mf p_-' \oplus \bar{\mf p_-'} & = & \mf p, \label{almost complex K} \\
  \lbrack \mf p_-', \mf p_-'\rbrack & \subseteq& \mf p_-'\cap \mf k=0 .\label{integrable K}
\end{eqnarray}We then have the following theorem.
\begin{theorem}\label{new complex D}
Let $D=G_\R/V$ be a non-classical flag domain and $G_\R$ be of Hermitian type with a subspace $\mf p_-'\subseteq \mf p$ satisfying the properties as above.  Then the flag domain $D$ admits a new $G_\R$-invariant complex structure, given by
\begin{equation}\label{new complex bundle}
  \T' D = G_\R\times_V \left( \mf k_-\oplus \mf p_-'\right)\subseteq \T^\C D
\end{equation}
Let $D'$ denote the differentiable manifold $D$ equipped with the new complex structure. Then the projection map $$p:\, D'\to G_\R/K$$ is holomorphic.
\end{theorem}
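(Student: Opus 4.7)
The plan is to apply Proposition \ref{equi complex} with the candidate subspace
$$
\mf s = \mf k_- \oplus \mf p_-^R \,\subseteq\, \mf g,
$$
which by construction sits inside $\bigoplus_{k\neq 0}\mf g^k$ (the $\mf k_-$ piece is inside $\mf n_-$, and $\mf p_-^R\subseteq \mf p = \bigoplus_{k \text{ odd}}\mf g^k$). Once I verify properties \eqref{associatedD}, \eqref{almost complex D} and \eqref{integrable D} for this $\mf s$, item (3) of Proposition \ref{equi complex} immediately yields the $G_\R$-invariant complex structure on $D$ encoded by \eqref{new complex bundle}. So the first task is to check the three conditions, and the second task is to verify that the projection $p$ becomes holomorphic with respect to this new structure.

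For \eqref{associatedD}, since $\mf v\subseteq \mf k$ and $\ad \mf k(\mf p_-^R)\subseteq \mf p_-^R$ by hypothesis, we get $\ad \mf v(\mf p_-^R)\subseteq \mf p_-^R$; for the $\mf k_-$ summand, $\mf v = \mf g^0$ and $\mf k_- = \mf k\cap \mf n_-$, so $[\mf v,\mf k_-]\subseteq \mf k\cap \mf n_- = \mf k_-$ using $[\mf g^0,\mf g^k]\subseteq \mf g^k$. For \eqref{almost complex D}, note $\bar{\mf s} = \mf k_+\oplus \overline{\mf p_-^R}$, so
$$
\mf s\oplus \bar{\mf s} \;=\; (\mf k_-\oplus \mf k_+)\oplus (\mf p_-^R\oplus \overline{\mf p_-^R}) \;=\; (\mf n_-\oplus \mf n_+)\cap \mf k \;\oplus\; \mf p \;=\; \mf n_-\oplus \mf n_+,
$$
where the last equality uses that $\mf p\subseteq \mf n_-\oplus \mf n_+$ and $(\mf n_-\oplus \mf n_+)\cap \mf k = \mf k_-\oplus \mf k_+$. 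For \eqref{integrable D}, I split into three cases: $[\mf k_-,\mf k_-]\subseteq \mf k_-$ since $\mf k_-\subset \mf n_-$ is a nilpotent subalgebra; $[\mf k_-,\mf p_-^R]\subseteq \mf p_-^R$ by \eqref{associated K} applied to $\mf k_-\subset \mf k$; and $[\mf p_-^R,\mf p_-^R]=0$ by \eqref{integrable K}. So in fact the stronger $[\mf s,\mf s]\subseteq \mf s$ holds, consistent with Corollary \ref{ssins}.

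Having established the new complex structure $\T^R D = G_\R\times_V (\mf k_-\oplus \mf p_-^R)$, the holomorphicity of $p:D^R\to G_\R/K$ reduces, by $G_\R$-equivariance, to checking it at the base point $o\in D$. The Hermitian symmetric structure on $G_\R/K$ is, by the hypothesis on $\mf p_-^R$, precisely the one whose holomorphic tangent space at $p(o)$ is $\mf p_-^R\subseteq \mf g/\mf k$. The differential $dp_o:\mf g/\mf v\to \mf g/\mf k$ is the natural quotient, under which $\mf k_-\subset \mf k$ maps to zero and $\mf p_-^R$ maps isomorphically onto its image in $\mf g/\mf k$ (since $\mf p_-^R\cap \mf k = 0$). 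Therefore $dp_o(\mf s)\subseteq \mf p_-^R$, i.e.\ $dp$ sends $(1,0)$-vectors of $D^R$ into $(1,0)$-vectors of $G_\R/K$, which is the holomorphicity statement.

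No step looks genuinely difficult: each computation is a short application of the decomposition rules already assembled in Section \ref{Pre} together with the hypotheses \eqref{associated K}--\eqref{integrable K}. The real content of the theorem is the \emph{identification} of the correct $\mf s$ to plug into Proposition \ref{equi complex}; once the candidate is written down, the verifications are mechanical. If there is a subtle point to watch, it is that the hypothesis $G_\R$ of Hermitian type is used twice --- once to guarantee existence of $\mf p_-^R$, and once (implicitly) to ensure the Hermitian symmetric complex structure on $G_\R/K$ used in the final holomorphicity check matches the one induced by $\mf p_-^R$.
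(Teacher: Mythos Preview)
Your proposal is correct and follows essentially the same route as the paper: set $\mf s=\mf k_-\oplus\mf p_-^R$, verify \eqref{associatedD}, \eqref{almost complex D}, \eqref{integrable D} (in fact \eqref{integrable D'}) via the grading and the hypotheses \eqref{associated K}--\eqref{integrable K}, then check holomorphicity of $p$ by observing $dp(\mf s)\subseteq \mf p_-^R$. One small remark: your justification ``$\mf k_-\subset\mf n_-$ is a nilpotent subalgebra'' for $[\mf k_-,\mf k_-]\subseteq\mf k_-$ is slightly elliptic---the closure follows from $[\mf k,\mf k]\subseteq\mf k$ together with $[\mf n_-,\mf n_-]\subseteq\mf n_-$ (or equivalently from the grading, as the paper writes it), not merely from containment in $\mf n_-$.
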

\begin{proof}
According to Proposition \ref{equi complex}, we need to check that $$\mf n_-' = \mf k_-\oplus \mf p_-'$$ satisfies conditions \eqref{associatedD}, \eqref{almost complex D} and \eqref{integrable D'}.

Note that $$\ad \mf v(\mf k_-)\subseteq \mf k_-.$$ Since $\mf v\subseteq \mf k$, \eqref{associated K} implies that $\mf p_-'$ is $\ad \mf v$-invariant. 
Therefore $$\mf n_-' = \mf k_-\oplus \mf p_-'$$ is $\ad \mf v$-invariant, which proves \eqref{associatedD}.

Equation \eqref{almost complex D} follows from 
\begin{align*}
  \mf n_-'\oplus \bar{\mf n_-'}&= \mf k_-\oplus \mf p_-' \oplus \mf k_+\oplus {\mf p_+'}\\
   &= \mf k_-\oplus  \mf k_+\oplus {\mf p} \tag{\eqref{almost complex K} $\Longrightarrow$} \\
   &= \mf n_+\oplus \mf n_-.
\end{align*}

Note that
\begin{equation}\label{k-,k-}
[\mf k_-, \mf k_-]\subseteq \mf k_-
\end{equation}
and that \eqref{associated K} implies that 
\begin{equation}\label{p-k-}
[\mf k_-, \mf p_-']\subset \mf p_-'.
\end{equation}
Then the integrability condition \eqref{integrable D} follows from that 
\begin{align*}
  \lbrack \mf n_-', \mf n_-'\rbrack & =  [\mf k_-\oplus \mf p_-', \mf k_-\oplus \mf p_-'] \\
   & = [\mf k_-, \mf k_-]\oplus [\mf k_-, \mf p_-']\oplus [ \mf p_-',  \mf p_-'] \\
   &\subset \mf k_- \oplus \mf p_-'=\mf n_-' \tag{\eqref{integrable K}, \eqref{k-,k-}, \eqref{p-k-} $\Longrightarrow$}.
\end{align*}

Finally we have proved that the subbundle $\T' D$ in \eqref{new complex bundle} defines a complex structure on $D$. Moreover, the image of the tangent map on the complexified tangent spaces,
\begin{equation}\label{tangent p}
  dp:\, \T^\C D\to \T^\C G_\R/K,
\end{equation}
when restricted to the subbundle $\T' D$, is contained in $$\T^{1,0} G_\R/K = G_\R\times_K \left( \mf p_-'\right),$$
which implies that the projection map $p$ is holomorphic with respect to the new complex structure on $D$.
\end{proof}

The usual complex structure on the non-classical flag domain $D$ is given by the associated bundle 
$$\T^{1,0} D \simeq G_\R\times_V (\mf n_-)=G_\R\times_V (\mf k_- \oplus \mf p_-).$$

Since $D$ is non-classical and $G_\R/K$ is Hermitian symmetric, the image
$$dp(\T^{1,0} D)\nsubseteq \T^{1,0} G_\R/K = G_\R\times_K \left( \mf p_-'\right)$$
for the tangent map $dp$ in \eqref{tangent p}.
This implies that $$\mf p_-\neq \mf p_-'. $$
Therefore the two complex structures $\T^{1,0} D$ and $\T' D$ on $D$ are different. 

As in Theorem \ref{new complex D} we will abuse notations to denote the underlying differentiable manifold $D^{\mathrm{diff}}$ of $D$ with the two complex structures by
$$D= (D^{\mathrm{diff}}, \T^{1,0}D),\ \ D'=(D^{\mathrm{diff}},\T'D)$$respectively.

By applying Theorem \ref{complex on check DR}, we deduce the following theorem. 

\begin{theorem}\label{check DR}
Let the assumption be as in Theorem \ref{new complex D}.
Then there exists a parabolic subgroup $B'$ of $G_\C$ with Lie algebra $$\mf b' =\mf k_+\oplus \mf p_+'\oplus \mf v$$ such that $\check D'=G_\C/B'$ is a complex flag variety.
Moreover $D'$ is a $G_\R$-orbit of the base point $o=[B']$ in $\check D'$ and hence is a classical flag domain.
\end{theorem}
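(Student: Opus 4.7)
The plan is to invoke Proposition \ref{equi complex} directly, applied to the subspace $\mf s = \mf k_-\oplus \mf p_-^R$ which was introduced in Theorem \ref{new complex D}. In the proof of that theorem it was verified that $\mf s$ satisfies conditions \eqref{associatedD}, \eqref{almost complex D} and \eqref{integrable D'}, so part (1) (equivalently (2)) of Proposition \ref{equi complex} applies to $\mf s$. Statement (5) of the same proposition then immediately hands us a parabolic subgroup $B^R\subset G_\C$ whose Lie algebra is $\mf b^R = \bar{\mf s}\oplus \mf g^0$. Since complex conjugation with respect to the real form $\mf g_0$ sends $\mf k_-$ to $\mf k_+$ and $\mf p_-^R$ to $\mf p_+^R$, we obtain $\bar{\mf s}=\mf k_+\oplus \mf p_+^R$, which yields the claimed formula $\mf b^R=\mf k_+\oplus \mf p_+^R\oplus \mf g^0$. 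The same part (5) asserts that $\check D^R := G_\C/B^R$ is a complex flag manifold and that the $G_\R$-orbit of the base point $o=[B^R]$ is open in $\check D^R$ and biholomorphic to $D$ equipped with the complex structure $\T^R D$, i.e.\ to $D^R$.

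Next I would verify that this identification is consistent with the period-domain structure $D^R = G_\R/V$. For this one checks that the $G_\R$-stabilizer of $o = [B^R]$ is exactly $V$, which reduces to the Lie-algebra identity $\mf b^R\cap \mf g_0 = \mf v_0$. Since $\mf v=\mf g^0$ is stable under the conjugation fixing $\mf g_0$, the intersection $\mf g^0\cap \mf g_0 = \mf v_0$; on the other hand conjugation interchanges $\mf k_\pm$ and interchanges $\mf p_\pm^R$, so $(\mf k_+\oplus \mf p_+^R)\cap \mf g_0 = 0$. Hence $\mf b^R\cap \mf g_0=\mf v_0$ and $D^R\simeq G_\R/V$ as a real homogeneous space, carrying the new complex structure $\T^R D$ via the identification with the open $G_\R$-orbit in $\check D^R$.

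Finally, to see that $D^R$ is a classical period domain I would recall the two defining conditions: $G_\R/K$ must be Hermitian symmetric, and the projection $p\colon D^R\to G_\R/K$ must be holomorphic. The first holds by the standing hypothesis that $G_\R$ is of Hermitian type, and the second was already established in the last paragraph of the proof of Theorem \ref{new complex D} using $dp(\T^R D)\subseteq G_\R\times_K \mf p_-^R = \T'(G_\R/K)$.

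There is no serious obstacle; the theorem is essentially a direct specialization of Proposition \ref{equi complex} (3)$\Leftrightarrow$(5) to the subspace $\mf s = \mf k_-\oplus \mf p_-^R$. The only small bookkeeping point is the isotropy computation above, and the observation that the holomorphicity of $p$ (already obtained in the earlier theorem) supplies the remaining property needed to call $D^R$ a classical period domain.
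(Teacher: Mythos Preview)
Your proposal is correct and follows essentially the same route as the paper: the paper simply states that Theorem \ref{check DR} follows by applying Theorem \ref{complex on check DR} (equivalently, part (5) of Proposition \ref{equi complex}) to the subspace $\mf s=\mf k_-\oplus \mf p_-^R$ introduced in Theorem \ref{new complex D}. Your added verifications of the isotropy $\mf b^R\cap \mf g_0=\mf v_0$ and of the classicality of $D^R$ via the holomorphicity of $p$ are exactly the details implicit in the paper's one-line deduction.
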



\section{Geometric applications to compact quotients}\label{applications}
In this section we deduce several  geometric applications of the new vanishing theorem  and the new complex structures on non-classical flag domains.

Let $X$ be a compact complex manifold. We say that $X$ belongs to Fujiki class $\mathcal C$ if there exists a closed positive $(1,1)$-current $T$ on $X$ such that
$$T\geq \varepsilon \omega$$
for some Hermitian metric $\omega$ on $X$ and some constant $\varepsilon>0$. Such a current $T$ is called a K\"ahler current.

Here, a current may be regarded as a generalized differential form whose coefficients are distributions rather than smooth functions. Thus a K\"ahler current is a singular analogue of a K\"ahler form.
%
%

\begin{theorem}\label{comact X not ddbar}
Let $D=G_\R/V$ be a non-classical flag domain.
Let $\Gamma \subset G_\R$ be a co-compact and torsion-free discrete group. Then the compact complex manifold $X=\Gamma \backslash D$ is not in Fujiki class $\mathcal C$.
\end{theorem}
\begin{proof}
Assume that $X$ is in Fujiki class $\mathcal C$ with a K\"ahler current $T$. Then from \cite{DGMS}, we know that $X$ is a $\partial\bar{\partial}$-manifold. Moreover there exists a Hodge structure on the de Rham cohomology $H^{k}_{\mathrm{dR}}(X)$ for any $k$, given by
$$H^{k}_{\mathrm{dR}}(X)=\bigoplus_{p+q=k}H^{p,q}_{\bar{\partial}}(X), \, \, H^{p,q}_{\bar{\partial}}(X)=\bar{H^{q,p}_{\bar{\partial}}(X)}.$$

Hence from Corollary \ref{Omega on X}, we have that
$$H^{p}(X,\mathcal O_{X})\simeq H^{0,p}_{\bar{\partial}}(X)\simeq \bar{H^{p,0}_{\bar{\partial}}(X)}\simeq \bar{H^{0}(X,\Omega^{p}_{X})}=0.$$
This implies that the Euler characteristic $\chi(\mathcal O_{X})$ of the structure sheaf $\mathcal O_{X}$ is
$$ \chi(\mathcal O_{X})=\sum_{i=0}^{n}(-1)^{i}\dim_{\C}H^{p}(X,\mathcal O_{X})=1.$$

Now we take a subgroup $\Gamma'\subset \Gamma$ of finite index $m>1$. Then $\pi:\,X'=\Gamma'\backslash D \to X$ is a finite unramified cover of $X$. Then $\pi^{*}T$ is also a K\"ahler current on $X'$, which implies that $X'$ is also in Fujiki class $\mathcal C$. Then the same argument implies that
$$ \chi(\mathcal O_{X'})=1.$$

Applying the Hirzebruch--Riemann--Roch theorem to the structure sheaves $\mathcal O_X$ and $\mathcal O_{X'}$, we obtain
\begin{equation}\label{HRR}
\chi(\mathcal O_{X'})=\int_{X'} \operatorname{td}(X')=\int_{X'}\pi^*\operatorname{td}(X)=m\int_X \operatorname{td}(X)=m\chi(\mathcal O_X),
\end{equation}
where $\operatorname{td}(X)$ and $\operatorname{td}(X')$ denote the Todd classes of the tangent bundles $\mathrm TX$ and $\mathrm TX'$, respectively. Indeed, since $\pi$ is locally biholomorphic, we have $\mathrm TX'\simeq \pi^*\mathrm TX$, and hence $\operatorname{td}(X')=\pi^*\operatorname{td}(X)$. The equality
$$\int_{X'}\pi^*\operatorname{td}(X)=m\int_X\operatorname{td}(X)$$
follows from the fact that $\pi$ has degree $m$.

Thus \eqref{HRR} gives $m=1$, contradicting our choice of $m>1$. Hence $X$ is not in Fujiki class $\mathcal C$.
\end{proof}
 
In \cite{CT}, Carlson and Toledo proved that there exists no K\"ahler metric on the compact quotient $X=\Gamma \backslash D$ of the non-classical flag domain $D$. Their method is to study the compact analytic subvarieties of $D$ and to use the harmonic maps developed by Sampson in \cite{Sam} and Siu in \cite{Siu}. So Theorem \ref{comact X not ddbar} can be considered as a generalization of the main result in \cite{CT} which is proved by a totally different method.

In \cite{GRT}, Griffiths, Robles, and Toledo proved that the quotient $X = \Gamma \backslash D$ of the non-classical flag domain $D$, which is not necessarily compact or smooth, is not algebraic. They employed methods from the cycle chain connectedness of $D$, as outlined in Theorem \ref{conn of D}, and the Shafarevich map developed by Koll\'ar in \cite{Kol}. Our result serves as a partial generalization of theirs.

Let $\Gamma\subseteq G_\R$ be a co-compact torsion-free discrete subgroup. Since the complex structures on
$D$ and $D'$
are both $G_\R$-invariant, the quotient spaces $X=\Gamma \backslash D$ and $X'=\Gamma \backslash D'$ are compact complex manifolds, which are diffeomorphic to each other.


On the other hand, from (4.22) and Section 7 of \cite{GS}, we have that $X'$ admits a positive line bundle which implies that $X'$ is projective.

\begin{theorem}\label{proj no Fujiki} Let $D=G_\R/V$ be a non-classical flag domain and $G_\R$ be of Hermitian type.
Then we have two compact complex manifolds $X=\Gamma \backslash D$ and $X'=\Gamma \backslash D'$ which is diffeomorphic to each other,  such that $X$ is not in Fujiki class $\mathcal C$ while $X'$ is a projective manifold.
\end{theorem}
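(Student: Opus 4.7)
The plan is to establish the three claims in the statement separately. The diffeomorphism and non-Fujiki parts are essentially free from the preceding machinery; the real work is producing a polarization on $X^R$.

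First I would unpack the construction in Theorem~\ref{new complex D}: $D^R$ is obtained from $D$ by retaining the underlying smooth manifold $G_\R/V$ and merely replacing the $G_\R$-invariant complex structure $\T'D = G_\R\times_V \mf n_-$ by $\T^R D = G_\R\times_V (\mf k_-\oplus \mf p_-^R)$. Since $\Gamma\subset G_\R$ acts by smooth left translations, the quotient $\Gamma\backslash (G_\R/V)$ inherits both complex structures, giving two compact complex manifolds $X$ and $X^R$ with a common underlying smooth manifold. The assertion that $X$ is not in Fujiki class $\mathcal{C}$ is then nothing but Corollary~\ref{cpt non-fujiki}.

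For projectivity of $X^R$ the strategy is to produce a positive $G_\R$-invariant line bundle $\mathcal L^R_\lambda$ on $D^R$ whose curvature form descends to a K\"ahler form on $X^R$, and then to invoke the Kodaira embedding theorem. By Theorem~\ref{check DR}, $D^R$ is a classical period domain presented as $G_\R\cdot[B^R]\subset G_\C/B^R$, so the Griffiths--Schmid formalism applies: there is a choice of positive roots $\Delta_+^R$ with $\mf n_-^R = \bigoplus_{\alpha\in\Delta_+^R\setminus\Delta(\mf v,\mf h)}\mf g_{-\alpha}$, and for an integral weight $\lambda$ the curvature of the associated homogeneous line bundle has the form
\begin{equation*}
\Omega^R \;=\; \sum_{\alpha\in \Delta_+^{R,c}}(\lambda,\alpha)\,\omega^\alpha\wedge\bar\omega^\alpha \;-\; \sum_{\beta\in \Delta_+^{R,nc}}(\lambda,\beta)\,\omega^\beta\wedge\bar\omega^\beta,
\end{equation*}
exactly as in \eqref{curv of E} but computed relative to $\Delta_+^R$. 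Because $D^R$ is classical, one may choose $\lambda$ deep enough in an appropriate Weyl chamber so that $(\lambda,\alpha)>0$ for every $\alpha\in\Delta_+^{R,c}$ and $(\lambda,\beta)<0$ for every $\beta\in\Delta_+^{R,nc}$, making $\Omega^R$ positive definite on $D^R$. This is the content of (4.22) and Section~7 of \cite{GS}.

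Finally, since $\Omega^R$ is $G_\R$-invariant, it descends to a positive $(1,1)$-form on $X^R=\Gamma\backslash D^R$, and the descended line bundle $\Gamma\backslash \mathcal L^R_\lambda$ is ample; Kodaira embedding then gives that $X^R$ is projective. The main point where something could go wrong is the uniform sign of $\Omega^R$: this is precisely the sign pattern forbidden for the original complex structure on $D$ by Proposition~\ref{le D=0 conjecture}, and the whole purpose of the new complex structure built in Section~\ref{new complex structures} is to realign $\mf p_-$ into $\mf p_-^R$ so that the noncompact part of the curvature acquires the correct sign, exhibiting $D^R$ as the classical ``flip'' on which Griffiths--Schmid positivity can be carried out.
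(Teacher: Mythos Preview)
Your proposal is correct and follows essentially the same route as the paper. The paper's argument is in fact terser than yours: it notes the diffeomorphism from $G_\R$-invariance of both complex structures, invokes Corollary~\ref{cpt non-fujiki} for the non-Fujiki claim, and for projectivity of $X^R$ simply cites (4.22) and Section~7 of \cite{GS} for the existence of a positive line bundle---your discussion of the curvature formula $\Omega^R$ and the choice of $\lambda$ is just an unpacking of that citation.
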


From Theorem \ref{Mum conj X} and Theorem \ref{proj no Fujiki}, we know that there exists a rationally connected compact complex manifold $X=\Gamma \backslash D$, which is not in Fujiki class $\mathcal C$.

There is another definition of rationally connectedness in the category of compact complex spaces, which we call rationally connectedness with compact deformation space. Please see Definition 2.8 in \cite{Fujiki}. Roughly speaking, a compact complex space $X$ is called rationally connected with compact deformation space, provided that any two points can be connected by a chain of rational curves with compact deformation space. 

In Appendix of \cite{Fujiki}, Fujiki showed a compact complex space $X$ is Moishezon, provided that it is rationally connected with compact deformation space. 
Combined with our results, we have the following corollary.

\begin{corollary}
There exists compact complex manifold which is rationally connected but not rationally connected with compact deformation space.
\end{corollary}

One can refer to \cite{Ver} for more examples of such spaces.

We next record another question arising from the pair of complex structures
constructed above.

First, recall that a compact complex manifold  $ M  $ is said to be \emph{locally rigid} if, for any deformation of complex structures
 $$
f: \mathcal{M} \to \Delta
 $$
over a polydisk  $ \Delta \subset \C^{N}  $ with central fiber  $ f^{-1}(0) \triangleq M_{0} \simeq M  $, the fibers  $ M_{t}  $ are biholomorphic to  $ M  $ for all  $ t \in \Delta  $ sufficiently close to  $ 0 $.

\begin{problem}\label{def of X}
Let $D=G_{\R}/T$ be a non-classical flag domain with $G_{\R}$ being simple and of Hermitian type. 

(1) Are the compact quotients $X = \Gamma \backslash D$ and $X' = \Gamma \backslash D'$ locally rigid or not?

(2) Are the two compact complex manifolds $X$ and $X'$ deformation equivalent in the sense of Voisin \cite{Voisin04}? Or, one may ask whether $X$ is deformation equivalent to a compact K\"ahler manifold.
\end{problem}

Recall that Voisin defines two compact complex manifolds $X$ and $X’$ to be deformation equivalent if there exists an analytic family $\mathcal{X} \to B$ over a connected analytic space $B$ such that $X \cong X_{b_{1}}$ and $X’ \cong X_{b_{2}}$ for some $b_{1}, b_{2} \in B$.

\begin{remark}
An appropriate method for studying problem~(1) is to compute the automorphic
cohomology groups
$$
H^{k}(X,\Theta_{X}),\qquad k=1,2,
$$
where $\Theta_{X}=\mathcal O_{X}(\mathrm TX)$. As noted earlier, the weights
occurring in the holomorphic tangent bundle $\mathrm TX$ may be singular.
However, the known approaches to higher automorphic cohomology, including the
main results of Section~7 in \cite{GS} and the Penrose transformation on
compact quotients of flag domains \cite{LS24}, require sufficiently regular
weights. This makes it difficult to decide the local rigidity of $X$ by the
available general theory.

For the classical realization $X'$, the natural holomorphic fibration
$$
\tilde p:X'=\Gamma\backslash D'
\longrightarrow
B=\Gamma\backslash G_{\mathbb R}/K
$$
has fiber
$$
Z=K/V.
$$
The short exact sequence
$$
0\longrightarrow \Theta_{X'/B}\longrightarrow \Theta_{X'}
\longrightarrow \tilde p^{*}\Theta_B\longrightarrow 0
$$
allows one to identify the first-order deformation space of $X'$ with a
relative automorphic cohomology group. Indeed, since $Z$ is a flag variety, one
has
$$
H^q(Z,\mathcal O_Z)=0,\qquad q>0.
$$
Hence, by the projection formula,
$$
H^i(X',\tilde p^*\Theta_B)\simeq H^i(B,\Theta_B).
$$
The locally symmetric quotient $B$ is locally rigid and has no nonzero
holomorphic vector fields, so
$$
H^0(B,\Theta_B)=0,
\qquad
H^1(B,\Theta_B)=0.
$$
It follows from the long exact sequence in cohomology that
$$
H^1(X',\Theta_{X'})
\simeq
H^1(X',\Theta_{X'/B}).
$$

Next, since the flag variety $Z=K/V$ is rigid, one has
$$
H^1(Z,\Theta_Z)=0.
$$
The Leray spectral sequence then gives
$$
H^1(X',\Theta_{X'/B})
\simeq
H^1(B,\tilde p_*\Theta_{X'/B}).
$$
Moreover,
$$
\Theta_{X'/B}
\simeq
\Gamma\backslash
\bigl(G_{\mathbb R}\times_V(\mathfrak k/\mathfrak v)\bigr),
$$
and therefore
$$
\tilde p_*\Theta_{X'/B}
\simeq
\Gamma\backslash
\bigl(G_{\mathbb R}\times_K H^0(K/V,\Theta_{K/V})\bigr).
$$
Consequently,
$$
T^1_{X'}
=
H^1(X',\Theta_{X'})
\simeq
H^1\Bigl(B,\Gamma\backslash
\bigl(G_{\mathbb R}\times_K H^0(K/V,\Theta_{K/V})\bigr)\Bigr).
$$

Thus the local rigidity of $X'$ is reduced to the vanishing of the above
automorphic cohomology group. If it is nonzero, the
corresponding first-order deformation directions may still be obstructed; this
depends on the obstruction space in
$$
H^2(X',\Theta_{X'}).
$$
\end{remark}

Moreover, problem~(2) is closely related to the following conjectures in
deformation theory.

\begin{conjecture*}
(i) \emph{Conjecture 5.1 of \cite[Demailly--Paun]{DP}.}

Let $\mathscr X \to S$ be a deformation of compact complex manifolds over an
irreducible base $S$. Assume that one of the fibers $X_{t_0}$ is K\"ahler.
Then there exists a countable union
$$
S'\subsetneq S
$$
of analytic subsets such that $X_t$ is K\"ahler for every
$t\in S\setminus S'$. Moreover, $S'$ can be chosen so that the K\"ahler cone is
invariant over $S\setminus S'$ under parallel transport by the Gauss--Manin
connection.

(ii) \emph{K\"ahler limits are in Fujiki class $\mathcal C$.}

Let
$$
\pi:\mathscr X\longrightarrow \Delta
$$
be a deformation of compact complex manifolds over a disk, and let
$\Delta^*=\Delta\setminus\{0\}$. Assume that $X_t=\pi^{-1}(t)$ is K\"ahler for
every $t\in \Delta^*$. Then the central fiber
$$
X_0=\pi^{-1}(0)
$$
is in Fujiki class $\mathcal C$.
\end{conjecture*}

Therefore, a negative answer to problem~(2) would be consistent with the two
conjectures above. Conversely, if one could construct an analytic family over
a connected base containing both $X$ and the projective manifold $X'$ as
fibers, then the two conjectures above could not both be true.

%
%
%
%
%
%


\section{Metrics with one negative curvature direction on arbitrary line bundles}
\label{hom line}

In this section, we study line bundles on a general flag domain
$D=G_{\mathbb R}/V$ and on its smooth quotients. In the previous sections,
homogeneous vector bundles on $G_{\mathbb R}/V$ were reduced to homogeneous
line bundles on $G_{\mathbb R}/H$ by choosing suitable weights. Here we work
directly with line bundles on $G_{\mathbb R}/V$.

Let $\Lambda_{\mathfrak v}$ denote the lattice of weights
$\lambda\in \sqrt{-1}\mathfrak h_0^*$ which define characters of $V$.
Equivalently, $\lambda\in \Lambda_{\mathfrak v}$ is orthogonal to the root
system of the semisimple Levi factor of $\mathfrak v_{\mathbb C}$, that is,
$$
\lambda(H_\alpha)=0,\qquad \alpha\in \Delta(\mathfrak v,\mathfrak h).
$$
For $\lambda\in \Lambda_{\mathfrak v}$, we denote by $\mathcal L_\lambda$ the
corresponding homogeneous line bundle on $G_{\mathbb R}/V$.

\begin{theorem}\label{hom curv}
Let $\Gamma$ be $\{1\}$ or a co-compact torsion-free discrete subgroup of
$G_{\mathbb R}$. Let
$$
X=\Gamma\backslash G_{\mathbb R}/V
$$
be the smooth quotient of the non-classical flag domain
$D=G_{\mathbb R}/V$. Let $L_X$ be a holomorphic line bundle on $X$.
Let $\lambda\in \Lambda_{\mathfrak v}\subset \sqrt{-1}\mathfrak h_0^*$
be such that the homogeneous line bundle $\mathcal L_\lambda$ descends to
a line bundle $\mathcal L_{\lambda,X}$ on $X$ and
$$
c_1(\mathcal L_{\lambda,X})=c_1(L_X).
$$
Then there exists a Hermitian metric $g$ on $L_X$ such that
$$
\Theta_g=\Theta(\mathcal L_{\lambda,X})-\omega,
$$
where $\omega$ is a non-negative closed $(1,1)$-form on $X$, and
$$
\Theta(\mathcal L_{\lambda,X})
=
\sum_{\alpha\in \Delta_+^{\mathrm{c}}}(\lambda,\alpha)\,
\omega^\alpha \wedge \bar{\omega}^\alpha
-
\sum_{\beta\in \Delta_+^{\mathrm{nc}}}(\lambda,\beta)\,
\omega^\beta \wedge \bar{\omega}^\beta .
$$
In particular, if $D$ is non-classical, the curvature $\Theta_g$ has a strictly negative
direction at every point of $X$.
\end{theorem}

For the proof of Theorem \ref{hom curv}, we need more details about the fibrations in \eqref{IDU}.

Firstly we briefly recall the relation between the cycle space and the Grauert domain. 
The main results used here are due to Akhiezer--Gindikin \cite{AkG} and Burns--Halverscheid--Hind \cite{BHH}. We also refer the reader to \cite{GGK} for an outline of these results.

Let $M=G_{\mathbb R}/K$ be the Riemannian symmetric space, so that
$$\mathrm TM\simeq G_{\mathbb R}\times_K \mathfrak p_{0}.$$
Let $\mathfrak a_{0}\subset \mathfrak p_{0}$ be a maximal abelian subspace, and let
$\Phi(\mathfrak g_{0},\mathfrak a_{0})$ be the corresponding restricted root system. Set
$$\omega_0=\{H\in \mathfrak a_{0}: |\alpha(H)|<\pi/2 \text{ for all } \alpha\in \Phi(\mathfrak g_{0},\mathfrak a_{0})\}.$$
Akhiezer--Gindikin in \cite{AkG} proved that 
$$\mathscr U=G_{\mathbb R}\exp(\sqrt{-1}\omega_0)\cdot u_o
\subset G_{\mathbb C}/K_{\mathbb C},$$
where $u_{o}=Z_{o}=K/V$ is the base cycle through the base point $o$ in $D$.

The Akhiezer--Gindikin crown domain is the open $G_{\mathbb R}$-invariant domain
$$\mathcal G=G_{\mathbb R}\times_K \operatorname{Ad}(K)(\omega_0)\subset TM.$$

By the theorem of Akhiezer--Gindikin \cite{AkG}, with the interpretation used in \cite{BHH}, the map
$$(g,\operatorname{Ad}(k)H)\longmapsto gk\exp(\sqrt{-1}H)u_o$$
gives a $G_{\mathbb R}$-equivariant biholomorphism $\phi$ from $\mathcal G$ onto the cycle space $\mathscr U$. Thus the cycle space may be identified with the crown domain, or equivalently with a Grauert domain inside the tangent bundle $\mathrm TM$ of the symmetric space $G_{\mathbb R}/K$.

Note that $\operatorname{Ad}(K)(\mf a_0)= \mf p_{0}$ and $\operatorname{Ad}(K)(\omega_0)\subset \mf p_{0}$ is an open subset containing the original. For $r> 0$, we define the complex ball by
$$\mathbf B_{r}=\{X\in \mf p_{0}:\, \|X\|_{K}<r\},$$
where $\|\cdot\|_{K}$ is the norm induced by the Killing form. Then $\mathbf B_{r}$ is an $\operatorname{Ad}(K)$-invariant complex open ball and is contained in $\operatorname{Ad}(K)(\omega_0)$ for small enough $r$.

\begin{proposition}\label{Ur stein}
Let $\mathscr U_{r}=G_{\mathbb R}\times_K \mathbf B_{r}$ be the open subset of $\mathscr U$ via the $G_{\R}$-equivariant biholomorphism $\phi:\, \mathcal G\to \mathscr U$. Then $\mathscr U_{r}$ is a contractible Stein manifold.
\end{proposition}
\begin{proof} 
By \cite[p.~107]{GGK}, the Riemannian distance function from the zero section
$$\rho:\, \mathrm T(G_{\mathbb R}/K)\to \mathbb R$$
is induced by the Killing form on $\mathfrak p_{0}\simeq \mathrm T_{o}(G_{\mathbb R}/K)$. Moreover, the complex structure on the maximal Grauert domain
$\mathcal G$
is characterized by the property that $\rho^{2}$ is strictly plurisubharmonic. Hence, for every $K$-invariant ball $\mathbf B_r\subset \operatorname{Ad}(K)(\omega_0)$, the associated domain
$$\mathscr U_r:=G_{\mathbb R}\times_K \mathbf B_r\subset \mathcal G$$
is a Stein Grauert domain.

Finally, $\mathscr U_r$ is contractible. Indeed, $G_{\mathbb R}/K$ is contractible, and $\mathbf B_r$ is contractible. Since
$$\mathscr U_r=G_{\mathbb R}\times_K \mathbf B_r\longrightarrow G_{\mathbb R}/K$$
is a locally trivial fiber bundle with fiber $\mathbf B_r$ over a contractible base, it is homotopy equivalent to $\mathbf B_r$. Therefore $\mathscr U_r$ is contractible.
\end{proof}
\begin{remark}
Note that $\mathscr U_0=G_{\mathbb R}/K$, corresponding to $r=0$, is only a real-analytic submanifold of $\mathscr U_r$, not a complex submanifold, even when $G_{\mathbb R}$ is of Hermitian type.
\end{remark}

Let $\mathscr I_{r}=\pi_{\mathscr U}^{-1}(\mathscr U_{r})$ be the open subset of $\mathscr I$ via the fibration maps in \eqref{IDU}. Then the fibrations in \eqref{IDU} becomes 
\begin{equation}\label{IDUr}
\xymatrix{
& \mathscr I_{r}\ar[ld]_-{\pi_{D,r}}\ar[rd]^-{\pi_{\mathscr{U}_{r}}} &\\
D& &\mU_{r} \,.
}
\end{equation}
Let $\Gamma\subset G_{\R}$ be a co-compact torsion-free discrete subgroup of $G_{\mathbb R}$. Then we have the fibrations of the corresponding quotients spaces
\begin{equation}\label{IDUrGamma}
\xymatrix{
& \Gamma\backslash \mathscr I_{r}\ar[ld]_-{\pi_{D,r,\Gamma}}\ar[rd]^-{\pi_{\mathscr{U}_{r},\Gamma}} &\\
\Gamma\backslash D& &\Gamma\backslash \mU_{r} \,.
}
\end{equation}

From Proposition \ref{Ur stein} and the arguments in \cite[Pages 108--109]{GGK}, one sees that the quotient space
$\Gamma\backslash \mathscr U_r$ is Stein.

\begin{proposition}
The maps $\pi_{D,r}$ and $\pi_{D,r,\Gamma}$ are fiber bundles whose fibers are
isomorphic to the contractible ball $\mathbf B_r$.
\end{proposition}

\begin{proof}
Recall that, under the identification of the cycle space with the Grauert
domain, we have
$$
\mathscr U_r=G_{\mathbb R}\times_K \mathbf B_r .
$$
Let $x=gV\in D$. The fiber of $\pi_{D,r}$ over $x$ is
$$
\pi_{D,r}^{-1}(x)
=
\{(x,u)\in \mathscr I_r:\ x\in Z_u\}.
$$
Via the projection
$$
\pi_{\mathscr U_r}:\mathscr I_r\longrightarrow \mathscr U_r,
$$
this fiber is mapped onto the set of cycles in $\mathscr U_r$ passing through
$x$. Under the above description of $\mathscr U_r$, these cycles are precisely
parametrized by
$$
[g,\mathbf B_r]\subset G_{\mathbb R}\times_K\mathbf B_r .
$$
Indeed, for $b\in \mathbf B_r$, the point $[g,b]\in \mathscr U_r$ represents a
cycle passing through $gV$. Conversely, every cycle in $\mathscr U_r$ passing
through $gV$ is represented in this way.

This description is independent of the choice of the representative $g$ of
$x=gV$. If $g$ is replaced by $gv$ with $v\in V\subset K$, then
$$
[gv,b]=[g,vb],
$$
and since $\mathbf B_r$ is $\operatorname{Ad}(K)$-invariant, we have
$v\mathbf B_r=\mathbf B_r$. Hence the subset $[g,\mathbf B_r]$ is well-defined.
For a fixed representative $g$, the map
$$
\mathbf B_r\longrightarrow [g,\mathbf B_r],
\qquad
b\longmapsto [g,b],
$$
is an isomorphism. Therefore
$$
\pi_{D,r}^{-1}(x)\simeq \mathbf B_r .
$$

Since the incidence family of cycles varies holomorphically, the above
identifications vary locally holomorphically with $x$. Hence
$$
\pi_{D,r}:\mathscr I_r\longrightarrow D
$$
is a fiber bundle with fiber $\mathbf B_r$.

Now let $\Gamma\subset G_{\mathbb R}$ be a co-compact torsion-free discrete
subgroup. The action of $\Gamma$ on $\mathscr U_r=G_{\mathbb R}\times_K
\mathbf B_r$ is induced by left multiplication on the $G_{\mathbb R}$-factor:
$$
\gamma\cdot[g,b]=[\gamma g,b].
$$
It is compatible with the incidence relation and with the map
$\pi_{D,r}$. Therefore the quotient map
$$
\pi_{D,r,\Gamma}:\Gamma\backslash\mathscr I_r
\longrightarrow
\Gamma\backslash D
$$
is again a fiber bundle. Its fiber over the point $\Gamma gV\in
\Gamma\backslash D$ is naturally identified with
$$
[g,\mathbf B_r]\simeq \mathbf B_r .
$$
Thus $\pi_{D,r,\Gamma}$ is a fiber bundle with fiber $\mathbf B_r$.
\end{proof}

\begin{proof}[Proof of Theorem \ref{hom curv} for $\Gamma=\{1\}$]
When $\Gamma=\{1\}$, $L=L_{X}$ is a line bundle on $D$.

It is well known that the Picard group of the base cycle $Z_o$ is generated by homogeneous line bundles. Since the natural restriction map induces an isomorphism
$$
H^2(D,\mathbb Z)\simeq H^2(Z_o,\mathbb Z),
$$
there exists a homogeneous line bundle $\mathcal L_\lambda$ on $D$ with $\lambda\in \Lambda_{\mf v}$ such that
$$
c_1(\mathcal L_\lambda)=c_1(L).
$$
Set
$$
F=L\otimes \mathcal L_\lambda^{-1}.
$$
Then $F$ is $\Gamma$-invariant and $c_1(F)=0$. It remains to construct a $\Gamma$-invariant Hermitian metric $g_F$ on $F$ whose curvature satisfies
$$
\Theta(F)=-\omega.
$$

Consider the pull-back $\pi_{D,r}^{*}F$ on $\mathscr I_{r}$ via the fibration in \eqref{IDUr}. For $u\in \mathscr U_r$, the fiber of $\pi_{\mathscr U_r}$ is
$$\pi_{\mathscr U_r}^{-1}(u)=\{(x,u):\, x \in Z_u\}\simeq Z_u.$$
The restricted line bundle
$$\pi_{D,r}^{*}F|_{\pi_{\mathscr U_r}^{-1}(u)}$$
has zero first Chern class. Since $Z_u\simeq Z_o$ and $\operatorname{Pic}(Z_u)\simeq H^2(Z_u,\mathbb Z)$, it follows that
$$\pi_{D,r}^{*}F|_{\pi_{\mathscr U_r}^{-1}(u)}\simeq \mathcal O_{Z_u}.$$
Thus $\pi_{D,r}^{*}F$ is trivial along the fibers of $\pi_{\mathscr U_r}$. Since each $Z_u$ is a compact flag variety, we have
$$H^0(Z_u,\pi_{D,r}^{*}F|_{Z_u})\simeq \mathbb C,\qquad
H^1(Z_u,\pi_{D,r}^{*}F|_{Z_u})=0.$$
By Grauert's theorem, the direct image
$$F_U:=(\pi_{\mathscr U_r})_*(\pi_{D,r}^{*}F)$$
is a holomorphic line bundle on $\mathscr U_r$, and the natural evaluation map
$$\pi_{\mathscr U_r}^{*}F_U\to \pi_{D,r}^{*}F$$
is an isomorphism. Hence
$$\pi_{D,r}^{*}F\simeq \pi_{\mathscr U_r}^{*}F_U.$$

On the other hand, $\mathscr U_r$ is Stein and contractible. Therefore, by the exponential sequence and Cartan's theorem B,
$$
\operatorname{Pic}(\mathscr U_r)
=H^1(\mathscr U_r,\mathcal O_{\mathscr U_r}^{*})
\simeq H^2(\mathscr U_r,\mathbb Z)=0.
$$
Thus $F_U$ is trivial on $\mathscr U_r$, and consequently $\pi_{D,r}^{*}F$ is trivial on $\mathscr I_r$. 

Choose a nowhere zero holomorphic section $\tilde s$ of $\pi_{D,r}^{*}F$ over
$\mathscr I_r$. We may take an open cover $\mathcal U$ of $D$ and local sections $s_U$ of $F$ over
$U\in \mathcal U$ such that
$$
\pi_{D,r}^{*}s_U=f_U(z,w)\tilde s,
$$
where $z$ denotes the local coordinates on $U$ and $w$ denotes the coordinates on
the fiber $B_r$ of $\pi_{D,r}$. 
We define a Hermitian metric $g_F$ on $F$ by
$$
g_F(s_U,s_U)(z)
=
\frac{1}{\operatorname{Vol}(B_r)}
\int_{B_r}|f_U(z,w)|^2\,d\mu(w),
$$
where $d\mu$ is the volume form on $B_r$ induced by the Killing form.
The normalizing factor $\operatorname{Vol}(B_r)^{-1}$ is immaterial for the curvature.

The function $f_U(z,w)$ need not be bounded a priori along the fibers of
$\mathscr I_r \to D$, that is, for $w\in B_r$. We shrink $r$, if
necessary, so that
$$
\mathscr U_{r+\epsilon} \Subset \mathscr U .
$$
Then $\mathscr U_{r+\epsilon}$ is still a contractible Stein subdomain of
the cycle space. Hence the pullback line bundle
$$
\pi_{D,r+\epsilon}^{*}F
$$
is trivial on $\mathscr I_{r+\epsilon}\supset \mathscr I_{r}$. With respect to such a
trivialization, $f_U(z,w)$ is holomorphic for $w\in B_{r+\epsilon}$.
Since
$$
\overline{B_r}\Subset B_{r+\epsilon},
$$
it follows that, for each fixed $z$, the function $f_U(z,\cdot)$ is
bounded on $B_r$.

We also need to check that this is a well-defined metric on the line bundle $F$. On $U\cap V'\neq \emptyset$, write
$$
s_U=\varphi_{UV'}s_{V'}.
$$
Then
$$
f_U(z,w)=\varphi_{UV'}(z)f_{V'}(z,w),
$$
and hence
$$
\int_{B_r}|f_U(z,w)|^2\,d\mu(w)
=
|\varphi_{UV'}(z)|^2
\int_{B_r}|f_{V'}(z,w)|^2\,d\mu(w).
$$
Thus the above local definitions glue to a $\Gamma$-invariant Hermitian metric on $F$, since the cover $\mathcal U$ is $\Gamma$-invariant.

With respect to the local frame $s_U$, the curvature of $g_F$ is
$$
\Theta(F)=-\partial\bar\partial\log g_F(s_U,s_U),
$$
where $\partial$ and $\bar\partial$ are taken along $D$, i.e. in the $z=(z_1,\ldots,z_n)$-variables.
Hence
$$
\Theta(F)=\sum_{i\bar j}\Theta(F)_{i\bar j}dz_i\wedge d\bar z_j,
$$
and
$$
\Theta(F)_{i\bar j}
=
-\left(
\frac{\int_{B_r} f_i\overline{f_j}\,d\mu}{\int_{B_r}|f|^2\,d\mu}
-
\frac{
\left(\int_{B_r} f_i\overline f\,d\mu\right)
\left(\int_{B_r} f\overline{f_j}\,d\mu\right)}
{\left(\int_{B_r}|f|^2\,d\mu\right)^2}
\right),
$$
where $f_i=\partial f_U(z,w)/\partial z_i$.

By the Cauchy--Schwarz inequality, the Hermitian matrix
$$
\left(
\frac{\int_{B_r} f_i\overline{f_j}\,d\mu}{\int_{B_r}|f|^2\,d\mu}
-
\frac{
\left(\int_{B_r} f_i\overline f\,d\mu\right)
\left(\int_{B_r} f\overline{f_j}\,d\mu\right)}
{\left(\int_{B_r}|f|^2\,d\mu\right)^2}
\right)_{i,j}
$$
is non-negative. Hence
$$
\Theta(F)=-\omega
$$
for a non-negative closed $(1,1)$-form $\omega$ on $D$. Therefore the Hermitian metric
$$
g=g_{\mathcal L_\lambda}\otimes g_F
$$
on $L$ satisfies
$$
\Theta_g=\Theta(\mathcal L_\lambda)-\omega.
$$
\end{proof}

\begin{proof}[Proof of Theorem \ref{hom curv} for co-compact $\Gamma$]
We now assume that $\Gamma\subset G_{\mathbb R}$ is a co-compact torsion-free
discrete subgroup. Let $L$ be the pull-back of $L_X$ to $D$. As in the case
$\Gamma=\{1\}$, choose $\lambda\in\Lambda_{\mathfrak v}$ such that
$$
c_1(\mathcal L_\lambda)=c_1(L)
$$
on $D$. Since $\mathcal L_\lambda$ is $G_{\mathbb R}$-homogeneous, it descends
to a line bundle $\mathcal L_{\lambda,X}$ on $X$. Set
$$
F_X=L_X\otimes \mathcal L_{\lambda,X}^{-1}.
$$
Then the pull-back of $F_X$ to $D$ is
$$
F=L\otimes \mathcal L_\lambda^{-1},
$$
and $c_1(F)=0$ on $D$.

We use the quotient fibration diagram \eqref{IDUrGamma}. Consider
$$
\pi_{D,r,\Gamma}^{*}F_X
$$
on $\Gamma\backslash\mathscr I_r$. For $u\in \Gamma\backslash\mathscr U_r$,
the fiber of $\pi_{\mathscr U_r,\Gamma}$ is a compact flag variety
$Z_u\simeq K/V$. Since the pull-back of $F_X$ to $D$ has zero first Chern
class, the restriction of $F_X$ to each cycle has zero first Chern class.
Therefore
$$
\pi_{D,r,\Gamma}^{*}F_X|_{Z_u}\simeq \mathcal O_{Z_u}.
$$
Thus
$$
H^0(Z_u,\pi_{D,r,\Gamma}^{*}F_X|_{Z_u})\simeq \mathbb C,
\qquad
H^1(Z_u,\pi_{D,r,\Gamma}^{*}F_X|_{Z_u})=0.
$$
By Grauert's theorem and base change, the direct image
$$
F_{U,\Gamma}:=
(\pi_{\mathscr U_r,\Gamma})_*
(\pi_{D,r,\Gamma}^{*}F_X)
$$
is a holomorphic line bundle on $\Gamma\backslash\mathscr U_r$, and the
evaluation map gives an isomorphism
$$
\pi_{\mathscr U_r,\Gamma}^{*}F_{U,\Gamma}
\simeq
\pi_{D,r,\Gamma}^{*}F_X.
$$

We now choose a Hermitian metric on $F_{U,\Gamma}$ with non-positive curvature.
Indeed, by \cite[Pages 108--109]{GGK}, the cycle space $\mathscr U$ carries a
$G_{\mathbb R}$-invariant strictly plurisubharmonic exhaustion function. Its
restriction to $\mathscr U_r$ descends to a strictly plurisubharmonic function
$\rho$ on $\Gamma\backslash\mathscr U_r$. Choose $\epsilon>0$ with
$$
\mathscr U_r\Subset \mathscr U_{r+\epsilon}\Subset \mathscr U.
$$
Let $h_U$ be any Hermitian metric on the corresponding direct image line bundle
over $\Gamma\backslash\mathscr U_{r+\epsilon}$. Since
$\Gamma\backslash\mathscr U_r$ is relatively compact in
$\Gamma\backslash\mathscr U_{r+\epsilon}$, the curvature of $h_U$ is bounded on
$\Gamma\backslash\mathscr U_r$. For $A\gg 1$, set
$$
h_{U,A}=h_U e^{A\rho}.
$$
With the convention
$$
\Theta(h)=-\partial\bar\partial\log |e|_h^2,
$$
we have
$$
\Theta(h_{U,A})
=
\Theta(h_U)-A\,\partial\bar\partial\rho.
$$
Since $\rho$ is strictly plurisubharmonic and $\mathscr U_r$ is relatively
compact, choosing $A$ sufficiently large gives
$$
\Theta(h_{U,A})\leq 0
$$
on $\Gamma\backslash\mathscr U_r$.

Using the isomorphism
$$
\pi_{\mathscr U_r,\Gamma}^{*}F_{U,\Gamma}
\simeq
\pi_{D,r,\Gamma}^{*}F_X,
$$
we pull back $h_{U,A}$ to a metric on $\pi_{D,r,\Gamma}^{*}F_X$. We then define
a Hermitian metric $g_F$ on $F_X$ by fiber integration along
$\pi_{D,r,\Gamma}$. Namely, for a local holomorphic frame $s_V$ of $F_X$ over
$V\subset X$, write
$$
\pi_{D,r,\Gamma}^{*}s_V=f_V(z,w)\,
\pi_{\mathscr U_r,\Gamma}^{*}e
$$
with respect to a local frame $e$ of $F_{U,\Gamma}$. If
$$
|e|_{h_{U,A}}^2=e^{-\psi(z,w)},
$$
then we set
$$
g_F(s_V,s_V)(z)
=
\frac{1}{\operatorname{Vol}(\mathbf B_r)}
\int_{\mathbf B_r}|f_V(z,w)|^2e^{-\psi(z,w)}\,d\mu(w),
$$
where $d\mu$ is the $K$-invariant volume form on $\mathbf B_r$ induced by the
Killing form. The transition functions of $F_X$ depend only on the $z$-variable,
so these local definitions glue to a Hermitian metric on $F_X$.

We claim that the curvature of $g_F$ is non-positive. Put
$$
I(z)=\int_{\mathbf B_r}|f_V(z,w)|^2e^{-\psi(z,w)}\,d\mu(w).
$$
Then
$$
\Theta(g_F)=-\partial\bar\partial\log I(z).
$$
Writing
$$
D_i f=f_i-\psi_i f,
$$
a direct computation gives
\begin{eqnarray*}
\Theta(g_F)_{i\bar j}&
=&
\frac{
\int_{\mathbf B_r}\psi_{i\bar j}|f|^2e^{-\psi}\,d\mu}
{\int_{\mathbf B_r}|f|^2e^{-\psi}\,d\mu}
-\\
&&
\left[
\frac{
\int_{\mathbf B_r}D_i f\,\overline{D_jf}\,e^{-\psi}\,d\mu}
{\int_{\mathbf B_r}|f|^2e^{-\psi}\,d\mu}
-
\frac{
\left(\int_{\mathbf B_r}D_i f\,\bar f\,e^{-\psi}\,d\mu\right)
\left(\int_{\mathbf B_r}f\,\overline{D_jf}\,e^{-\psi}\,d\mu\right)}
{\left(\int_{\mathbf B_r}|f|^2e^{-\psi}\,d\mu\right)^2}
\right].
\end{eqnarray*}
The term in brackets is non-negative by the Cauchy--Schwarz inequality. The
first term is the fiber average of the pull-back of
$\Theta(h_{U,A})$, which is non-positive. Therefore
$$
\Theta(g_F)\leq 0.
$$
Hence we may write
$$
\Theta(g_F)=-\omega
$$
for a non-negative closed $(1,1)$-form $\omega$ on $X$.

Finally, equip $L_X=\mathcal L_{\lambda,X}\otimes F_X$ with the product metric
$$
g=g_{\mathcal L_{\lambda,X}}\otimes g_F.
$$
Then
$$
\Theta_g
=
\Theta(\mathcal L_{\lambda,X})-\omega.
$$
\end{proof}
\begin{remark}
Theorem \ref{hom curv} may be viewed as a weak version of a conjecture of Griffiths, which asserts that every line bundle $L$ on a non-classical flag domain $D$ is homogeneous; see \cite[p.~240]{GGK}. Indeed, if this conjecture holds, then $L$ itself is homogeneous, and hence the correction term $\omega$ in Theorem \ref{hom curv} vanishes identically.
\end{remark}

\begin{theorem}\label{vanishing of any line}
The equivalence (1)$\Longleftrightarrow$(7) in Theorem \ref{equ of non-classical D} holds.
\end{theorem}
\begin{proof} 
From the proof of Theorem \ref{equ of non-classical D}, it remains only to prove that (1)$\Longrightarrow$(7). In other words, for any non-trivial line bundle $L_X$ on the quotient space $X=\Gamma \backslash D$ of non-classical flag domain $D=G_{\R}/V$, where $\Gamma \subset G_\R$ is a co-compact torsion-free discrete subgroup, we need to show that
$$H^0(X, L_X)=0.$$

Then the argument is similar to the proof of Theorem \ref{D=0 conjecture}. Indeed, Theorem \ref{hom curv} implies that the curvature form of the line bundle $L_X$ has a negative eigenvalue direction at every point of $X$.
\end{proof}

\begin{corollary}
Let $X=\Gamma \backslash D$ be the quotient of a non-classical flag domain
$D=G_{\mathbb R}/V$ by a co-compact torsion-free discrete subgroup
$\Gamma \subset G_{\mathbb R}$. Then the following hold:

\begin{enumerate}
\item[(i)] $X$ contains no nonzero effective divisors;

\item[(ii)] $X$ admits no nonconstant meromorphic functions. In particular, the
algebraic dimension of $X$ is zero;

\item[(iii)] every meromorphic map from $X$ to a projective variety has
zero-dimensional image. Equivalently, there is no nonconstant meromorphic map
from $X$ to a projective variety.
\end{enumerate}
\end{corollary}

\begin{proof}
(i) Let $E$ be an effective divisor on $X$. Then $E$ determines a nonzero
global section
$$
s_E \in H^0(X,\mathcal O_X(E)).
$$
By Theorem~\ref{vanishing of any line}, this is possible only if
$\mathcal O_X(E)$ is trivial. In that case $s_E$ is a holomorphic function on
the compact complex manifold $X$, hence is constant. Therefore its divisor is
zero, and so $E=0$. This proves (i).

(ii) Suppose that $f$ is a meromorphic function on $X$. Its principal divisor is
$$
(f)=(f)_0-(f)_\infty,
$$
where $(f)_0$ and $(f)_\infty$ are effective divisors. By (i), both
$(f)_0$ and $(f)_\infty$ must be zero. Hence $f$ is holomorphic on the compact
complex manifold $X$, and therefore constant. Thus every meromorphic function
on $X$ is constant, and consequently
$$
a(X)=0.
$$

(iii) Let
$$
\phi: X \dashrightarrow Y
$$
be a meromorphic map to a projective variety $Y$. If the image of $\phi$ had
positive dimension, let $Z\subset Y$ be the Zariski closure of the image. Then
$Z$ is a positive-dimensional projective variety, and hence admits a
nonconstant rational function $g\in \mathbb C(Z)$. The composition
$$
g\circ \phi
$$
would then be a nonconstant meromorphic function on $X$, contradicting (ii).
Thus the image of $\phi$ must be zero-dimensional.
\end{proof}


\begin{thebibliography}{99}
\bibitem{AkG}
D.~N. Akhiezer and S.~G. Gindikin,
\newblock{On Stein extensions of real symmetric spaces,}
\newblock {\em Math. Ann.} \textbf{286} (1990), pp.~1--12.

\bibitem{BHH}
D.~Burns, S.~Halverscheid, and R.~Hind,
\newblock{The geometry of Grauert tubes and complexification of symmetric spaces,}
\newblock {\em Duke Math. J.} \textbf{118} (2003), pp.~465--491.

\bibitem{CT89}
J. ~Carlson and D. ~Toledo, 
\newblock{Harmonic mappings of K\"ahler manifolds to locally symmetric spaces}, 
\newblock{\em Inst. Hautes \'Etudes Sci. Publ. Math.}, \textbf{69} (1989),pp.~173--201.

\bibitem{CT}
J. ~Carlson and D. ~Toledo,
\newblock{Compact quotients of non-classical domains are not K\"ahler,}
\newblock{\em Contemporary Mathematics,} \textbf{608} (2014), pp.~51--57.

%
%
%

%

%
%

\bibitem{Demailly06}
J.-P.~Demailly,
\newblock{K\"ahler manifolds and transcendental techniques in algebraic geometry,}
\newblock{\em Proceedings of the International Congress of Mathematicians, Madrid, August 22--30, 2006, Volume I. Plenary Lectures and Ceremonies,}
pp.~153--186.

\bibitem{DGMS}
P.~Deligne, P.~Griffiths, J.~Morgan, and D.~Sullivan, 
\newblock{Real homotopy theory of K\"ahler manifolds},
\newblock{\em Invent. Math.}, \textbf{29} (1975), no. 3, pp.~245--274.
%

\bibitem{DP}
J.-P. ~Demailly and M. ~Paun,
\newblock{Numerical characterization of the K\"ahler cone of a compact K\"ahler manifold},
\newblock{\em Annals of Mathematics}, \textbf{159} (2004), pp.~1247--1274. 


\bibitem{FHW}
G.~Fels, A.~Huckleberry and J.~A.~Wolf, 
\newblock{\em Cycle Spaces of Flag Domains, Progress in Mathematics},
\textbf{245}, \newblock{Birkh\"auser}, (2006).


%

%
\bibitem{Fujiki}
A.~Fujiki,
\newblock{Deformation of uniruled manifolds},
\newblock{\em Publ. RIMS, Kyoto Univ.}, \textbf{17} (1981), pp.~687--702.
%
%
%
%
%
%
%
%
%
\bibitem{GGK}
M.~Green, P.~Griffiths and M.~Kerr,
\newblock {\em Hodge Theory, Complex Geometry, and Representation Theory},
\newblock {Regional Conference Series in Mathematics}, Volume \textbf{118}, AMS, (2013).

\bibitem{GS}
P.~Griffiths and W.~Schmid,
\newblock{Locally homogeneous complex manifolds,}
\newblock{\em Acta Math.}, \textbf{123} (1969), pp.~253--302.



\bibitem{GRT}
P.~Griffiths, C.~Robles and D.~Toledo,
\newblock{Quotients of non-classical flag domains are not algebraic},
\newblock{\em Algebraic Geometry}, \textbf{1} (2014), pp.~1--13.


\bibitem{Gro}
M.~Gromov, 
\newblock{Carnot-Carathéodory spaces seen from within}, 
\newblock{\em Progress in Mathematics}, vol. \textbf{144} (1996), pp.~79--323.
%

\bibitem{Harris}
J.~Harris,
\newblock{Lectures on rationally connected varieties},
\href{https://mat.uab.cat/~kock/RLN/rcv.pdf}{https://mat.uab.cat/~kock/RLN/rcv.pdf}, online note.
%
%

\bibitem{Hironaka62}
H.~Hironaka,
\newblock{An example of a non-K\"ahlerian complex-analytic deformation of K\"ahlerian complex structures,}
\newblock{\em Ann. of Math.} \textbf{75} (1962), no.~1, pp.~190--208.
%
%
\bibitem{Huc}
A.~Huckleberry, 
\newblock{Hyperbolicity of cycle spaces and automorphism groups of flag domains}, 
\newblock{\em Amer. J. Math.}, \textbf{135} (2013), pp.~291–310.
%
\bibitem{Hum}
J.~Humphreys,
\newblock{\em Introduction to Lie Algebras and Representation Theory},
\newblock{Springer-Verlag New York}, (1972).
%
%
%



\bibitem{Jur}
V.~Jurdjevic, 
\newblock{\em Geometric Control Theory}, 
\newblock{Cambridge Studies in Advanced Mathematics}, Cambridge University Press, (1997).



%
%

\bibitem{knapp}
A.~W.~Knapp, 
\newblock {\em Lie groups beyond an introduction},
\newblock {Progress in Mathematics}, Vol.~\textbf{140}, Boston: Birkhäuser, (1996).


%
%
%
\bibitem{K63}
K.~Kodaira,
\newblock{On Stability of Compact Submanifolds of Complex Manifolds},
\newblock{\em American Journal of Mathematics}, \textbf{85}(1) (1963), pp.~79-94.

\bibitem{KM}
K.~Kodaira and J.~Morrow,
\newblock {\em Complex manifolds},
\newblock{AMS Chelsea Publishing, Porvindence, RI}, (2006), Reprint of the 1971 edition with errata.

%
%
%
\bibitem{KS3}
K.~Kodaira and D.~C.~Spencer,
\newblock {On Deformations of Complex Analytic Structures, III,}
\newblock {\em Annals of Mathematics,} Second Series, \textbf{71}(1) (1960), pp.~43--76.

%
\bibitem{Kol}
J.~Koll\'ar, 
\newblock {\em Shafarevich Maps and Automorphic Forms}, 
\newblock {Princeton University Press}, 1995.

\bibitem{Kol13}
J.~Koll\'ar, 
\newblock {\em Rational Curves on Algebraic Varieties},
\newblock {Springer Science and Business Media}, 2013.


%
%
%
%
\bibitem{LS24}
K.~Liu and Y.~Shen,
\newblock{Penrose transformation on flag domains,}
\newblock{\em arXiv:2411.13085,} (2024).

\bibitem{LS25}
K.~Liu and Y.~Shen,
\newblock{Sections of Hodge bundles and applications to deformation theory and Hodge theory,}
\newblock{preprint,} (2025).
%
%
%
%

%
%
%
%
%
%
%
%
%
%

\bibitem{Miyaoka74}
Y.~Miyaoka,
\newblock{K\"ahler metrics on elliptic surfaces,}
\newblock{\em Proc. Japan Acad.} \textbf{50} (1974), 533--536.

%
%
%
\bibitem{NN}
A.~Newlander and L.~Nirenberg, 
\newblock{Complex analytic coordinates in almost complex manifolds},
\newblock{\em Ann. Math.}, \textbf{65} (1957), pp.~391-404 .
%
%
%
%
%

\bibitem{royden1980}
H.~Royden, 
\newblock{The Ahlfors-Schwarz lemma in several complex variables,}
\newblock{\em Commentarii Mathematici Helvetici}, \textbf{55}(1) (1980), pp.~547--558.

%
%
\bibitem{Sam}
J.~H.~Sampson, 
\newblock{Applications of harmonic maps to K\"ahler geometry},
\newblock{\em Contemp. Math.}, vol. \textbf{49} (1986), pp.~125--134.



\bibitem{schmid97}
W.~Schmid,
\newblock {Discrete series}, 
\newblock{\em Proc. Symp. Pure Math.}, \textbf{61} (1997), pp.~83--113.
%
%
%
%
%

\bibitem{Siu}
Y.~T.~Siu, 
\newblock {The complex-analyticity of harmonic maps and the strong rigidity of compact K\"ahler manifolds}, 
\newblock{\em Ann. of Math.}, \textbf{112}, no. \textbf{1} (1980), pp.~73--111.

\bibitem{Siu83}
Y.-T.~Siu,
\newblock{Every K3 surface is K\"ahler,}
\newblock{\em Invent. Math.} \textbf{73} (1983), no.~1, 139--150.

%




%
%

%
\bibitem{Ver}
M.~Verbitsky,
\newblock{Rational curves and special metrics on twistor spaces},
\newblock{\em Geom. Topol.}, \textbf{18}(2) (2014), pp.~897--909.
%
%
%

\bibitem{Voisin04}
C.~Voisin,
\newblock{On the homotopy types of compact K\"ahler and complex projective manifolds,}
\newblock{\em Invent. Math.}, \textbf{157} (2004), pp.~329--343.

\bibitem{Wolf1969}
J.~A.~Wolf,
\newblock{The action of a real semisimple group on a complex flag manifold. I: Orbit structure and holomorphic arc components},
\newblock{\em Bull. Amer. Math. Soc.}, \textbf{75} (1969), pp.~1121--1237.


\bibitem{WW}
R.~O.~Wells,~Jr. and Joseph A.~Wolf,
\newblock {Poincare Series and Automorphic Cohomology on Flag Domains},
\newblock {\em Annals of Mathematics}, Vol.105, No.3 (1977), pp.~397--448.


%

\bibitem{Yau}
S.~T.~Yau, 
\newblock{A general Schwarz lemma for K\"ahler manifolds}, 
\newblock{\em Amer. J. Math.}, \textbf{100}, no. 1 (1978), pp.~197--203.


\end{thebibliography}
\end{document}